\tikzset{emp/.style={double distance = 0.3ex}}
\tikzset{oriented/.style={->,shorten >= 1.5pt}}
\tikzset{emp/.style={double distance = 0.3ex}}
\tikzset{M edge/.style={line width=1.3pt,double distance=1.1pt,->}}
\tikzset{F1 edge/.style={line width=1.3,color=red,->}}
\tikzset{F2 edge/.style={line width=1.3,color=blue,->}}
\tikzset{E edge/.style={line width=1.3,color=black,-}}
\tikzset{squared black vertex/.style={draw,minimum size=2mm,inner sep=0pt,outer sep=3pt,fill=black, color=black}}
\tikzset{red vertex/.style={circle,draw,minimum size=2mm,inner sep=0pt,outer sep=4pt,fill=red, color=red}}
\tikzset{blue vertex/.style={circle,draw,minimum size=2mm,inner sep=0pt,outer sep=4pt,fill=blue, color=blue}}
\tikzset{black vertex/.style={circle,draw,minimum size=2mm,inner sep=0pt,outer sep=3pt,fill=black, color=black}}
\tikzset{white vertex/.style={circle,draw,minimum size=2mm,inner sep=0pt,outer sep=3pt, color=black}}
\tikzstyle{edge}=[line width=1.3]
\tikzstyle{color1}=[color=red] 
\tikzstyle{color2}=[color=blue]
\tikzstyle{color3}=[color=green]
\tikzstyle{fucking loosely dotted}=[dash pattern=on \pgflinewidth off 6pt]
\tikzstyle{nonedge}=[color=red,fucking loosely dotted]
\tikzstyle{possible edge}=[edge, dashed]
\tikzstyle{snake}=[decorate, decoration=snake, segment length=1cm]
\tikzstyle{short snake}=[decorate, decoration=snake, segment length=7mm]
\tikzstyle{long snake}=[decorate, decoration=snake, segment length=11mm]
\colorlet{setfilling}{green!5!white}
\colorlet{setborder}{gray}
\newcommand*\patchAmsMathEnvironmentForLineno[1]{\expandafter\let\csname old#1\expandafter\endcsname\csname #1\endcsname
	\expandafter\let\csname oldend#1\expandafter\endcsname\csname end#1\endcsname
	\renewenvironment{#1}{\linenomath\csname old#1\endcsname}{\csname oldend#1\endcsname\endlinenomath}}\newcommand*\patchBothAmsMathEnvironmentsForLineno[1]{\patchAmsMathEnvironmentForLineno{#1}\patchAmsMathEnvironmentForLineno{#1*}}\AtBeginDocument{\patchBothAmsMathEnvironmentsForLineno{equation}\patchBothAmsMathEnvironmentsForLineno{align}\patchBothAmsMathEnvironmentsForLineno{flalign}\patchBothAmsMathEnvironmentsForLineno{alignat}\patchBothAmsMathEnvironmentsForLineno{gather}\patchBothAmsMathEnvironmentsForLineno{multline}}
\renewcommand{\PrintDOI}[1]{\doi{#1}}
\newtheorem{theorem}{Theorem}
\newtheorem*{theorem-no-number}{Theorem}
\newtheorem{conjecture}[theorem]{Conjecture}
\newtheorem{lemma}[theorem]{Lemma}
\newtheorem{corollary}[theorem]{Corollary}
\newtheorem{claim}{Claim}
\newtheorem{subclaim}{Subclaim}[claim]
\newtheorem{remark}{Remark}
\theoremstyle{definition}
\newtheorem{definition}{Definition}
\newcommand{\pn}{{\rm pn}}
\newcommand{\D}{\mathcal{D}}
\newcommand{\floor}[1]{\lfloor #1 \rfloor}
\newcommand{\ceil}[1]{\lceil #1 \rceil}
\renewcommand{\floor}[1]{\left\lfloor #1 \right\rfloor}
\renewcommand{\ceil}[1]{\left\lceil #1 \right\rceil}
\newcommand\tand{\ \text{and}\ }
\newcommand{\cD}{\mathcal{D}}
\newcommand{\cI}{\mathcal{I}}
\newcommand{\cS}{\mathcal{S}}
\newcommand{\cT}{\mathcal{T}}
\newcommand{\deleset}{\setminus}
\newcommand{\dele}{\setminus}
\newcommand{\addeset}{+}
\newcommand{\delvset}{-}
\newcommand{\delv}{-}
\title{Towards Gallai's path decomposition conjecture}
\author{F. Botler\textsuperscript{1} \hspace{.5cm}  M. Sambinelli\textsuperscript{2}\\
    {\footnotesize \textsuperscript{1}Programa de Engenharia de Sistemas e Computação}\vspace{-.2cm}\\
    {\footnotesize Universidade Federal do Rio de Janeiro}\\
    {\footnotesize \textsuperscript{2}Centro de Matemática, Computação e Cognição}\vspace{-.2cm}\\
    {\footnotesize Universidade Federal do ABC}
    \footnote{
{This study was financed in part by the Coordenação de Aperfeiçoamento de Pessoal de Nível Superior - Brasil (CAPES) - Finance Code 001.
F. Botler is partially supported by CNPq (Grant 423395/2018-1).
M. Sambinelli is supported by FAPESP (Grant 2017/23623-4) and CNPq (Grant 423833/2018-9).
E-mails: fbotler@cos.ufrj.br (F. Botler), m.sambinelli@ufabc.edu.br (M. Sambinelli).
}}}
\date{\today, \currenttime}
\date{}
\newcommand{\EV}[1]{EV(#1)}
\newcommand{\sD}{\mathcal{D}}
\let\simeq\undefined
\begin{document}

\sloppy


\maketitle

\newcommand{\calG}{the family of graphs for which (i) each block has maximum degree at most~\(3\); 
    and (ii) each component either has maximum degree at most \(3\) or has at most one block that contains triangles}

\begin{abstract}
    A path decomposition of a graph \(G\) is a collection of edge-disjoint paths of \(G\) that covers the edge set of \(G\).
    Gallai (1968) conjectured that every connected graph on~\(n\) vertices admits a path decomposition of cardinality at most \(\ceil{n/2}\).
    Seminal results towards its verification consider the graph obtained from \(G\) by removing its vertices of odd degree, which is called the \emph{E-subgraph} of \(G\).
    Lovász (1968) verified Gallai's Conjecture for graphs whose E-subgraphs consist of at most one vertex, and Pyber (1996) verified it for graphs whose E-subgraphs are forests.
In 2005, Fan verified Gallai's Conjecture for graphs in which each block of their E-subgraph is triangle-free and has maximum degree at most \(3\).
    Let \(\mathcal{G}\) be \calG.
In this paper, we generalize Fan's result by
    verifying  Gallai's Conjecture for graphs whose E-subgraphs are subgraphs of graphs in \(\mathcal{G}\).
This allows the components of the E-subgraphs to contain any number of blocks with triangles as long as they are subgraphs of graphs in \(\mathcal{G}\).

    \bigskip
    \noindent \textit{Keywords:} Graph, path, decomposition, Gallai's Conjecture, E-subgraph, maximum degree.
\end{abstract}

\section{Introduction}\label{sec:introduction}

In this paper, all graphs considered are finite and simple, i.e., contain a finite number of vertices and edges and have neither loops nor multiple edges.
A \emph{path decomposition} \(\D\) of a graph \(G\) is a collection of edge-disjoint paths of \(G\) that covers all the edges of \(G\).
A path decomposition~\(\D\) of a graph~\(G\) is \emph{minimum} if for every path decomposition~\(\D'\) of~\(G\) we have \(|\D| \leq |\D'|\), and the cardinality of such a minimum path decomposition, denoted by \(\pn(G)\), is called the \emph{path number} of \(G\).
Gallai proposed the following conjecture~(see~\cite{Lovasz68, Bondy14}).

\begin{conjecture}[Gallai, 1968]\label{conj:gallai}
If \(G\) is a connected simple graph on \(n\) vertices, then \(\pn(G) \leq \ceil{n/2}\).
\end{conjecture}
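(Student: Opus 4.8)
The plan is to attack the conjecture through the structure of the E-subgraph, since the parity of the degrees is what governs the path number. Fix a minimum path decomposition \(\D\) of \(G\) and, for each vertex \(v\), let \(t(v)\) be the number of paths of \(\D\) having \(v\) as an endpoint. Counting endpoints in two ways gives
\[
2|\D| = \sum_{v \in \vset{G}} t(v),
\]
and, since every path uses exactly two edges at each of its internal vertices and one edge at each endpoint, \(t(v) \equiv \deg(v) \pmod 2\). Thus \(t(v) \geq 1\) at every odd-degree vertex while \(t(v) \geq 0\) at every even-degree one. The bound \(\pn(G) \leq \ceil{n/2}\) then follows as soon as we produce a decomposition in which \emph{no even-degree vertex is an endpoint of any path}, for then \(2|\D| = \sum_v t(v)\) counts only odd-degree vertices; pushing each such vertex down to a single endpoint makes \(|\D|\) equal to half the (always even) number of odd-degree vertices, which is at most \(\ceil{n/2}\). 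The whole difficulty is therefore concentrated in forcing even-degree vertices to be internal to their paths while keeping the resulting pieces genuine paths rather than cycles.

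First I would reduce to the essential case. A minimal-counterexample argument lets me assume \(G\) is \(2\)-edge-connected: a bridge splits \(G\) at a cut vertex into two sides that are treated independently, with the path counts adding correctly. The natural starting point is then Lovász's decomposition of \(G\) into at most \(\floor{n/2}\) paths and cycles; the real content of Gallai's Conjecture is to absorb every cycle of such a decomposition into a path without increasing the total. I would encode this as a local \emph{transition system} at each even-degree vertex \(v\): pair up the edges incident to \(v\) so that each path passes straight through, and then argue that these pairings can be chosen globally so that the closed walks they create are all broken open at odd-degree vertices.

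Second, I would induct on the complexity of the E-subgraph \(H\) of \(G\), which is precisely the subgraph carrying the even-degree vertices and hence all of the obstruction. When \(H\) is edgeless we recover Lovász's single-even-vertex case; when \(H\) is a forest there are no cycles to absorb and Pyber's argument applies; and the known partial results (Fan's theorem, and the extension to the class \(\mathcal{G}\) proved in this paper) correspond exactly to the blocks of \(H\) in which the transition system above resolves locally. The inductive step would peel off a leaf block \(B\) of \(H\), build a good decomposition on the rest of \(G\) by induction, and extend it across \(B\) by an exchange argument that reroutes paths through \(B\) and merges endpoints at the cut vertex joining \(B\) to the remainder.

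The hard part will be the extension step across an \emph{arbitrary} block \(B\). When \(B\) simultaneously contains triangles and vertices of large degree, the forced local pairing at an even-degree vertex can close the passing paths into short cycles that cannot be opened without creating a fresh endpoint, and a single such failure spoils the global count. Controlling this requires either a discharging or potential-function argument that tracks, across all even-degree vertices at once, how many forced cycles are created against how many can be amortised against available odd endpoints, or else a new structural decomposition of dense blocks into locally resolvable pieces. It is precisely the absence of such a global argument for unrestricted blocks that keeps Gallai's Conjecture open: every positive result to date, including the present one, works by restricting \(H\) to a family of graphs for which the transition system provably resolves, and the decisive step in a complete proof would be to remove that restriction.
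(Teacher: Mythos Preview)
The statement you are addressing is Gallai's Conjecture itself, which the paper does \emph{not} prove; it is stated as an open conjecture, and the paper only establishes it for the restricted class of graphs whose E-subgraph lies in \(\mathcal{G}\). You clearly recognise this in your final paragraph, so your proposal is a strategy sketch rather than a proof, and should be read as such.

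That said, there is a genuine gap already in your first reduction. From \(t(v)=0\) for every even vertex you correctly get \(2|\D|=\sum_{v\text{ odd}} t(v)\), but this is a \emph{lower} bound on \(|\D|\); to turn it into the upper bound \(\lceil n/2\rceil\) you also need \(t(v)=1\) at every odd vertex. Your phrase ``pushing each such vertex down to a single endpoint'' is not an operation one can perform after the fact: arranging \(t(v)\le 1\) at odd vertices is exactly as hard as the original problem, not a cleanup step. Worse, the combined condition (\(t\equiv 0\) on even vertices and \(t\equiv 1\) on odd vertices) is strictly stronger than Gallai's bound and is \emph{false} for graphs such as \(K_3\) or any odd semi-clique, where every path decomposition must have an endpoint at some even vertex. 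So the reduction you propose is not to an equivalent statement but to a strictly stronger one that already fails on small examples; the paper has to work precisely because it cannot achieve \(t(v)=0\) everywhere on \(\EV{G}\), and instead carefully controls where the excess endpoints land (this is the role of the addible-set machinery and the SET-graph exception).

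Your inductive outline on leaf blocks of the E-subgraph is in spirit close to what the paper does for its partial result, but the paper's actual mechanism is quite different from a ``transition system'': it removes a carefully chosen Fan subgraph, applies induction, and then restores the deleted edges one by one via Lemmas~\ref{lemma:fan2} and~\ref{lemma:fan4}, tracking the endpoint counts explicitly. The exchange argument you envisage for extending across an arbitrary block is, as you yourself say, exactly the missing piece; the paper makes no progress on that general step and neither does your sketch.
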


Lov\'asz~\cite{Lovasz68} verified Conjecture~\ref{conj:gallai} for graphs that have at most one vertex with even degree.
Pyber~\cite{Pyber96} extended Lov\'asz's result by proving that Conjecture~\ref{conj:gallai} holds for graphs in which each cycle contains at least one vertex with odd degree.
In 2005, Fan~\cite{Fan05} strengthen these results by extending Lovász's technique (see Section~\ref{sec:lemmas}).
Given a graph \(G\), the \emph{even subgraph} of \(G\) (\emph{E-subgraph}, for short), denoted by \(\EV{G}\), is the graph obtained from \(G\) by removing its vertices with odd degree, or, equivalently, the subgraph of \(G\) induced by its even degree vertices.
A \emph{block} in a graph \(G\) is a maximal \(2\)-connected subgraph of \(G\),
and a \emph{leaf block} of \(G\) is a block that contains at most one cut-vertex of \(G\).
Thus, the results above may be restated as follows.

\begin{theorem}[Lovász, 1968; Pyber, 1996; Fan, 2005]\label{thms:seminal}
    Let \(G\) be a connected graph on \(n\) vertices.    
    \begin{enumerate}[(a)]
        \item\label{thm:lovasz} 
        If \(\EV{G}\) contains at most one vertex, 
        then \(\pn(G)\leq\lfloor n/2\rfloor\);
        \item\label{thm:pyber} 
        If \(\EV{G}\) is a forest, 
        then \(\pn(G)\leq\lfloor n/2\rfloor\);
        \item\label{thm:fan} 
        If each block of \(\EV{G}\) is triangle-free 
        and has maximum degree at most \(3\),\\ 
        then \(\pn(G)\leq\lfloor n/2\rfloor\).
    \end{enumerate}
\end{theorem}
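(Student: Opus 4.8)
The plan is short: parts~\ref{thm:lovasz}, \ref{thm:pyber}, and~\ref{thm:fan} are, respectively, the main theorems of Lov\'asz~\cite{Lovasz68}, Pyber~\cite{Pyber96}, and Fan~\cite{Fan05}, so the statement is obtained simply by quoting these three results. Since the same circle of ideas drives Section~\ref{sec:lemmas}, I sketch the common argument rather than merely point to the references.

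The backbone is Lov\'asz's theorem that \emph{every} graph on~\(n\) vertices decomposes into at most \(\floor{n/2}\) paths and cycles. Part~\ref{thm:lovasz} follows at once: if \(\EV{G}\) has at most one vertex then \(G\) has \(n\) or \(n-1\) vertices of odd degree, and in any decomposition into paths and cycles each such vertex must be the endpoint of some path; hence a decomposition of size \(\floor{n/2}\) can contain no cycle at all and is already a path decomposition. For parts~\ref{thm:pyber} and~\ref{thm:fan} the plan is an induction on \(|\eset{G}|\) in which one removes from~\(G\) a single carefully chosen path --- possibly one joining two vertices of \(\EV{G}\), so as to break up an Eulerian part --- and applies the inductive hypothesis to what remains; since deleting a path flips the parity of a vertex only when the path ends there, \(\EV{\cdot}\) changes in a controlled way. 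The whole difficulty is to \emph{route} the removed path so that (i) the new E-subgraph still lies in the admissible class, and (ii) the bound \(\floor{n/2}\) is not exceeded. This is where the structural hypotheses enter: when \(\EV{G}\) is a forest one peels towards its leaves (Pyber), and when the blocks of \(\EV{G}\) are triangle-free with \(\Delta\le 3\) one works around a leaf block of \(\EV{G}\) and applies Lov\'asz's edge-lifting operation at its degree-\(2\) and degree-\(3\) vertices so as not to create a triangle, or a vertex of degree \(\ge 4\), in the new E-subgraph (Fan).

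The main obstacle --- and the reason parts~\ref{thm:pyber} and~\ref{thm:fan} are substantially harder than~\ref{thm:lovasz} --- is that a \emph{path}, unlike a trail or an Eulerian walk, may not revisit a vertex, so the cheap Eulerian-type decompositions that would dispose of the even-degree vertices ``for free'' only produce trails. As soon as \(\EV{G}\) contains a cycle one must therefore show that the removed path can be chosen so that these trails split into genuine paths without the count overshooting \(\floor{n/2}\); it is exactly this phenomenon that the triangle-free, \(\Delta\le 3\) hypothesis tames in part~\ref{thm:fan}, and enlarging the family of admissible E-subgraphs beyond it is the aim of the present paper.
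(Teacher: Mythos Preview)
The paper does not prove Theorem~\ref{thms:seminal}; it is stated as background and attributed to \cite{Lovasz68,Pyber96,Fan05}. Your proposal does the same --- quote the three references --- so in that sense it matches the paper exactly.

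Your additional sketch, however, misrepresents Fan's method for part~\ref{thm:fan}. Fan does not proceed by ``removing a single carefully chosen path'' and performing ``edge-lifting at degree-\(2\) and degree-\(3\) vertices''. His technique --- which is exactly the machinery of Section~\ref{sec:lemmas} of this paper --- is instead to remove a small set of \emph{edges} incident to a well-chosen vertex \(u\) (not a path), apply the induction hypothesis to the resulting graph \(G'\) to obtain a path decomposition \(\D'\) of size at most \(\floor{n/2}\), and then add the removed edges back one by one using the notion of \emph{addible} edges (Definition~\ref{def:fan-addible}, Lemmas~\ref{lemma:fan2} and~\ref{lemma:fan4}). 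The key point is that an edge \(uv\) can be absorbed into \(\D'\) without increasing \(|\D'|\) whenever \(\D'(v)\) exceeds the number of passing neighbours of \(u\); the hypotheses ``triangle-free'' and ``\(\Delta\le 3\)'' on the blocks of \(\EV{G}\) are what keep the count of passing neighbours small enough for this to succeed. Since Section~\ref{sec:lemmas} is precisely this toolkit, your remark that ``the same circle of ideas drives Section~\ref{sec:lemmas}'' is right in spirit but your description of the mechanism is not.
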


Given a vertex \(u\) with even degree in \(G\), the degree \(d_{\EV{G}}(u)\) is often referred as the \emph{E-degree} of \(u\).
Consider a graph \(G\) for which each block of \(\EV{G}\) has maximum degree at most \(3\) (and that may contain triangles),
and that has a minimum number of edges.
One can show that each leaf block of \(\EV{G}\) must be a triangle, this is a key idea in the proof of Theorem~\ref{thms:seminal}\eqref{thm:fan}.
In this paper we extend Theorem~\ref{thms:seminal}\eqref{thm:fan} by presenting a strategy for dealing with these remaining triangles in the following special case.
Let \(\mathcal{G}\) denote \calG.
Note that \(\mathcal{G}\) contains (properly) the E-subgraphs of the graphs considered in~\cite{Fan05}.
We verify Conjecture~\ref{conj:gallai} for graphs whose E-subgraphs are subgraphs of graphs in \(\mathcal{G}\).

\begin{theorem}\label{thm:theorems-weak}
	If \(G\) is a connected graph on \(n\) vertices such that \(\EV{G}\) is a subgraph of a graph in \(\mathcal{G}\),
	then \mbox{\(\pn(G)\leq\ceil{n/2}\)}.
\end{theorem}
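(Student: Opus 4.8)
I would argue by contradiction, choosing a counterexample $G$ with the minimum number of edges; thus $G$ is connected, $\pn(G)>\ceil{n/2}$, and $\EV{G}\subseteq H$ for some $H\in\mathcal{G}$. Two features of this set-up drive the argument. First, every block of $\EV{G}$, being a $2$-connected subgraph of $H$, lies inside a single block of $H$ and is therefore subcubic; hence the reductions of Section~\ref{sec:lemmas} apply and, as recalled in the introduction, force every leaf block of $\EV{G}$ to be a triangle. Second, the hypothesis ``$\EV{G}$ is a subgraph of a member of $\mathcal{G}$'' is inherited by every graph obtained from $G$ by deleting edges, since $\EV{G-e}\subseteq\EV{G}\subseteq H$; this is what lets minimality bite on such a graph, and it is the reason the theorem is phrased with ``subgraph of a graph in $\mathcal{G}$'' rather than ``in $\mathcal{G}$'' (the family $\mathcal{G}$ is not itself closed under subgraphs). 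Before the main work I would clear the cases already covered by Theorem~\ref{thms:seminal}: if $\EV{G}$ is triangle-free (equivalently, all its blocks are triangle-free and subcubic) we are done by part~\eqref{thm:fan}, which gives $\pn(G)\le\floor{n/2}<\ceil{n/2}$. So $\EV{G}$ has a triangle; if moreover $\EV{G}$ has more than one block it has a leaf block $T=cxy$, necessarily a triangle, whose unique cut vertex $c$ then has $d_{\EV{G}}(c)=3$ with third E-edge a pendant edge $ce$ of $\EV{G}$, while $x$ and $y$ have E-degree $2$ and even degree at least $2$ in $G$. The leftover configurations --- $\EV{G}$ $2$-connected, or $\min\bigl(d_G(x),d_G(y)\bigr)=2$ --- I would settle by separate, more elementary arguments.

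The core of the proof is to eliminate $T$. The plan is a local surgery: delete the edge $xy$, and in the harder sub-cases also delete a second E-edge at $x$ or $y$, or split off pairs of non-E-edges at $x$ and at $y$. The resulting graph $G'$ is connected, has the same vertex set, has strictly fewer edges, and satisfies $\EV{G'}\subseteq H$; so by minimality $G'$ has a path decomposition $\mathcal{D}'$ with $|\mathcal{D}'|\le\ceil{n/2}$, and it remains to reinsert the deleted edge(s) into $\mathcal{D}'$ without raising the count. The engine is the usual parity fact: deleting $xy$ makes both $x$ and $y$ odd-degree in $G'$, so each is an endpoint of a path of $\mathcal{D}'$; if some path ending at $x$ avoids $y$, we prolong it along $xy$ and are done, and symmetrically for any second deleted edge. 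Thus the whole problem reduces to arranging $\mathcal{D}'$ so that a path ending at $x$ misses $y$.

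That last point is the main obstacle: a priori every path of $\mathcal{D}'$ ending at $x$ could pass through $y$, in which case prolonging creates a cycle rather than a path. Overcoming it is where the hypothesis on $\mathcal{G}$ is spent: whether the component of $\EV{G}$ containing $T$ is subcubic or has a vertex of degree at least $4$, the definition of $\mathcal{G}$ forces the triangle blocks of that component into an essentially ``tree-like'', pendant position --- in the subcubic case because each vertex of a triangle block has at most one edge leaving it, and in the other case because all triangles of that component are confined to a single block of $H$. This limited interaction of $T$ with the rest of $G$, together with the pendant E-edge $ce$ and the slack it creates at $c$, is what makes it possible to choose $\mathcal{D}'$ (running the argument behind Theorem~\ref{thms:seminal}\eqref{thm:fan} on the reduced graph) so that a path ending at $x$ misses $y$. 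Carrying out the resulting case analysis --- subcubic versus high-degree components, $c$ a genuine cut vertex versus $\EV{G}$ $2$-connected, the low-degree boundary cases, and the bookkeeping that keeps every modified graph simple and connected --- is the technical heart; once it is done, $\pn(G)\le|\mathcal{D}'|\le\ceil{n/2}$ contradicts the choice of $G$, completing the proof.
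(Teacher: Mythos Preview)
Your approach differs substantially from the paper's, and it has a genuine gap at its center. The paper does not attack Theorem~\ref{thm:theorems-weak} directly; it proves the stronger Theorem~\ref{thm:further} (that $G$ is a Gallai graph or a SET graph) and then applies Lemma~\ref{lemma:ESET-special-decomposition} to the SET case. The route to Theorem~\ref{thm:further} first settles the case $\Delta(\EV{G})\le 3$ completely (Theorem~\ref{thm:main-theorem-1}) via a long chain of structural claims built on Fan's addible-edge machinery (Lemmas~\ref{lemma:fan2}--\ref{lemma:extra-edges}) together with the ESET-absorbing lemmas (Lemmas~\ref{lemma:ESET-special-decomposition}--\ref{lemma:absorbing-all}); it then observes that a vertex of E-degree at least~$4$ must be a cut vertex of the ambient $H\in\mathcal{G}$, and that the leaf-block-is-a-triangle claim then forces two blocks of one component of $H$ to contain triangles, contradicting the definition of~$\mathcal{G}$.

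The gap in your sketch is precisely the step you yourself flag as ``the main obstacle'': after deleting $xy$, you must exhibit a decomposition $\mathcal{D}'$ of $G'$ in which some path ending at $x$ avoids $y$ (or, in the language of Lemma~\ref{lemma:fan2}, in which the passing neighbors of $x$ are controlled). You assert that the structure of $\mathcal{G}$ and ``running the argument behind Theorem~\ref{thms:seminal}\eqref{thm:fan}'' make this possible, but you give no mechanism. Fan's argument produces \emph{some} decomposition of $G'$, not one with this tailored property; engineering it is exactly what the paper's apparatus (addible sets towards and outwards, Fan subgraphs, hanging ESET subgraphs, Claims~\ref{claim:no-hanging-ESET}--\ref{claim:no-remaining-only-odds}) is built for, and it occupies most of Sections~\ref{sec:lemmas}--\ref{sec:further}. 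There are also subsidiary inaccuracies: the blanket claim $\EV{G-e}\subseteq\EV{G}$ is false when $e$ joins two odd vertices of $G$; the cut vertex $c$ of a leaf triangle need not satisfy $d_{\EV{G}}(c)=3$, and its remaining E-edges need not be pendant; and splitting off pairs of non-E-edges may create parallel edges, destroying simplicity.
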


We remark that Theorem~\ref{thm:theorems-weak} extends Theorem~\ref{thms:seminal}\eqref{thm:fan} by allowing some blocks  to contain triangles.
More specifically, a component of the E-subgraph may contain any number of blocks that contain triangles as long as it is a subgraph of graph  in \(\mathcal{G}\) (see Figure~\ref{fig:many-triangles}).

\begin{figure}
    \centering
    \begin{subfigure}{.45\textwidth}
        \centering
        \begin{tikzpicture}[scale = .5]

    \draw[color=white] (0, 5) rectangle (2, -3.5);

    \begin{scope}[scale=2, yshift=-1.1cm]
	\node (0)  [black vertex] 	at (0,0) {};
	\node (1)  [black vertex] 	at (1,0) {};
	\node (2)  [black vertex] 	at (2,0) {};
	\node (3)  [black vertex] 	at (3,0) {};
	\node (4)  [black vertex] 	at (0,1) {};
	\node (5)  [black vertex] 	at (3,1) {};
	\node (6)  [black vertex] 	at (0,2) {};
	\node (7)  [black vertex] 	at (3,2) {};
	\node (8)  [black vertex] 	at (0,3) {};
	\node (9)  [black vertex] 	at (1,3) {};
	\node (10) [black vertex] 	at (2,3) {};
	\node (11) [black vertex] 	at (3,3) {};
	    
    \end{scope}

    \draw[edge, black] (0) -- (1) -- (4) -- (0);
    \draw[edge, black] (2) -- (3) -- (5) -- (2);
    \draw[edge, black] (6) -- (8) -- (9) -- (6);
    \draw[edge, black] (7) -- (10) -- (11) -- (7);

    \draw[edge, black] (1) -- (2);
    \draw[edge, black] (4) -- (6);
    \draw[edge, black] (9) -- (10);
    \draw[edge, black, dotted] (5) -- (7);
    
    \draw[edge,black] (0) -- ($(0)+(180:1)$) (0) -- ($(0)+(225:1)$) (0) -- ($(0)+(270:1)$);
    \draw[edge,black] (3) -- ($(3)+(0:1)$) (3) -- ($(3)+(-45:1)$) (3) -- ($(3)+(-90:1)$);
    \draw[edge,black] (8) -- ($(8)+(180:1)$) (8) -- ($(8)+(135:1)$) (8) -- ($(8)+(90:1)$);
    \draw[edge,black] (11) -- ($(11)+(0:1)$) (11) -- ($(11)+(45:1)$) (11) -- ($(11)+(90:1)$);
    
\end{tikzpicture}

 \end{subfigure}
    \begin{subfigure}{.45\textwidth}
        \centering
        \begin{tikzpicture}[scale = 0.5]

    \draw[color=white] (0, 5) rectangle (2, -3.5);

    \begin{scope}
        \node (x0) [] 	at (0,0) {};
        \node (x1) [black vertex] 	at ($(x0)+(90:1)$) {};
        \node (x2) [black vertex] 	at ($(x0)+(210:1)$) {};
        \node (x3) [black vertex] 	at ($(x0)+(330:1)$) {};

        \draw[edge, black] (x1) -- (x2) -- (x3) -- (x1);
    \end{scope}
	
    \begin{scope}[yshift=4cm, rotate=180]
        \node (y0) [] 	at (0,0) {};
        \node (y1) [black vertex] 	at ($(y0)+(90:1)$) {};
        \node (y2) [black vertex] 	at ($(y0)+(210:1)$) {};
        \node (y3) [black vertex] 	at ($(y0)+(330:1)$) {};

        \draw[edge, black] (y1) -- (y2) -- (y3) -- (y1);
        \draw[edge, black] (y3) -- ($(y3)+(-45:1)$) (y3) -- ($(y3)+(-90:1)$) (y3) -- ($(y3)+(-135:1)$);
    \end{scope}
	
    \begin{scope}[shift=(-30:4cm), rotate=60]
        \node (z0) [] 	at (0,0) {};
        \node (z1) [black vertex] 	at ($(z0)+(90:1)$) {};
        \node (z2) [black vertex] 	at ($(z0)+(210:1)$) {};
        \node (z3) [black vertex] 	at ($(z0)+(330:1)$) {};

        \draw[edge, black] (z1) -- (z2) -- (z3) -- (z1);
        \draw[edge, black] (z3) -- ($(z3)+(-45:1)$) (z3) -- ($(z3)+(-90:1)$) (z3) -- ($(z3)+(-135:1)$);
    \end{scope}
	
    \begin{scope}[shift=(-150:4cm), rotate=-60]
        \node (w0) [] 	at (0,0) {};
        \node (w1) [black vertex] 	at ($(w0)+(90:1)$) {};
        \node (w2) [black vertex] 	at ($(w0)+(210:1)$) {};
        \node (w3) [black vertex] 	at ($(w0)+(330:1)$) {};

        \draw[edge, black] (w1) -- (w2) -- (w3) -- (w1);
        \draw[edge, black] (w3) -- ($(w3)+(-45:1)$) (w3) -- ($(w3)+(-90:1)$) (w3) -- ($(w3)+(-135:1)$);
    \end{scope}

    \draw[edge, black] (x1) -- (y1);
    \draw[edge, black] (x2) -- (w1);
    \draw[edge, black] (x3) -- (z1);

    \draw[edge, black, dotted] (y2) -- (z3);
    \draw[edge, black, dotted] (w2) -- (y3);
    \draw[edge, black, dotted] (z2) -- (w3);

\end{tikzpicture}

 \end{subfigure}
    \caption{Examples of graphs in which more than one block contain triangles, and that may be completed (by using the dotted lines) to graphs in \(\mathcal{G}\).}
    \label{fig:many-triangles}
\end{figure}
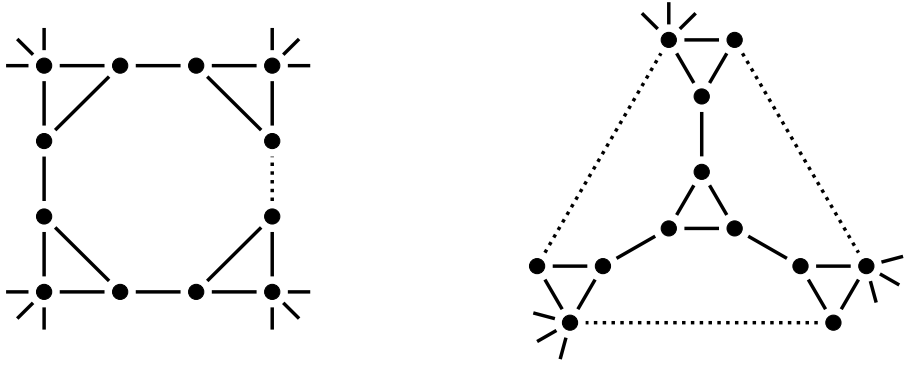

Conjecture~\ref{conj:gallai} has being deeply explored, and the literature indicating its correctness include results for Eulerian graphs with maximum degree at most \(4\)~\cite{FavaronKouider88}; a family of regular graphs~\cite{BotlerJimenez2017}; a family of triangle-free graphs~\cite{JimenezWakabayashi2017}; and maximal outerplanar graphs and \(2\)-connected outerplanar graphs~\cite{GengFangLi15}.
Recent results were obtained by Bonamy and Perrett~\cite{BonamyPerret19} who verified Conjecture~\ref{conj:gallai} for graphs with maximum degree at most \(5\).

Note that the results in Theorem~\ref{thms:seminal} give a bound of \(\lfloor n/2\rfloor\) for the graphs studied, which is slightly different from the bound of \(\lceil n/2\rceil\) proposed by Gallai. 
A straightforward condition for a graph on \(n\) vertices not to admit the former bound is to have sufficiently many edges.
More precisely, if \(|E(G)| > \lfloor n/2\rfloor (n-1)\), then we have \(\pn(G)\geq\lceil n/2\rceil\).
In this case, \(n\) must be an odd integer.
Such graphs are known as \emph{odd semi-cliques}~\cite{BonamyPerret19}.
This motivates the following strengthening of Conjecture~\ref{conj:gallai}, which was considered in~\cite{BoSaCoLe,BoJiSa18}.

\begin{conjecture}\label{conj:strong-gallai}
If \(G\) is a connected simple graph on \(n\) vertices, then either \(\pn(G) \leq \floor{n/2}\), or  \(\pn(G) = \ceil{n / 2}\) and \(G\) is an odd semi-clique.
\end{conjecture}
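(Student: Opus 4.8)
The plan is to deduce Conjecture~\ref{conj:strong-gallai}, for the class of graphs handled in this paper, from the bound already secured in Theorem~\ref{thm:theorems-weak}; throughout I assume \(\EV{G}\) is a subgraph of a graph in \(\mathcal{G}\), since the general conjecture lies beyond the present techniques and my goal is only to sharpen Theorem~\ref{thm:theorems-weak} into its dichotomous form. Because Theorem~\ref{thm:theorems-weak} already gives \(\pn(G)\le\ceil{n/2}\), a graph \(G\) in the class can violate Conjecture~\ref{conj:strong-gallai} only if \(\floor{n/2}<\pn(G)\le\ceil{n/2}\) while \(G\) is not an odd semi-clique; this forces \(n\) odd and \(\pn(G)=(n+1)/2\). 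Conversely, when \(G\) is an odd semi-clique the elementary edge-count bound \(|E(G)|>\floor{n/2}(n-1)\) already forces \(\pn(G)\ge\ceil{n/2}\), and Theorem~\ref{thm:theorems-weak} supplies the matching upper bound, so equality holds automatically and the second disjunct is met. Hence it suffices to prove the contrapositive sharpening: \emph{if \(G\) is in the class, \(n\) is odd, and \(G\) is not an odd semi-clique, then \(\pn(G)\le\floor{n/2}\).}

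I would argue by choosing a counterexample \(G\) minimizing \(n\) and then \(|E(G)|\), working inside the endpoint-counting framework underlying Theorems~\ref{thms:seminal} and~\ref{thm:theorems-weak}. Fix a minimum path decomposition \(\D\), and for each vertex \(v\) let \(t(v)\) be the number of paths of \(\D\) with \(v\) as an endpoint; then \(t(v)\equiv d_G(v)\pmod 2\) and \(2\pn(G)=\sum_v t(v)\). Reaching \(\pn(G)\le\floor{n/2}=(n-1)/2\) is equivalent to producing a decomposition with \(\sum_v t(v)\le n-1\), that is, to saving one path (lowering \(\sum_v t(v)\) by two) relative to the value \(n+1\) realized by the decomposition from the proof of Theorem~\ref{thm:theorems-weak}. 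As there, minimality together with Theorem~\ref{thms:seminal} forces \(\EV{G}\) to contain a triangle, to have all blocks of maximum degree at most \(3\), and to have every leaf block a triangle, while condition~(ii) confines each non-subcubic component to a single triangle-bearing block. The ``spare'' path that pushes the weak bound up to \(\ceil{n/2}\) is precisely the one created when the covering of a triangular leaf block \(T=xyz\) is completed.

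The core of the argument is an \emph{absorption step} that removes this spare path. The idea is to locate two distinct paths of \(\D\) sharing an endpoint \(w\) whose concatenation at \(w\) is again a path, and to replace the pair by its union, lowering \(|\D|\) by one; natural candidates for \(w\) are an even vertex of small E-degree adjacent to \(T\), or a vertex of \(T\) itself through which the triangle edges can be rerouted into an incident odd-degree portion of \(G\). Whenever such a merge exists we obtain \(\pn(G)\le(n-1)/2\), contradicting minimality, so in the extremal counterexample \emph{no} admissible merge is available. I would then show that this rigidity propagates: the failure of every local merge forces each relevant pair of paths to meet internally, which forces the neighborhoods around \(T\) to be nearly complete, and, using connectivity together with the block structure of \(\EV{G}\), pushes \(|E(G)|\) above the threshold \(\floor{n/2}(n-1)\). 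This makes \(G\) an odd semi-clique, contradicting the standing assumption and closing the induction.

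The main obstacle is exactly this last implication --- converting ``no path-merge is possible'' into the global density statement ``\(G\) is an odd semi-clique.'' In the triangle-free setting of Theorem~\ref{thms:seminal}\eqref{thm:fan} the absence of triangles in \(\EV{G}\) already forbids the tight configuration, whereas here the triangular leaf blocks are genuine and must be shown to be absorbable unless the whole graph is extremal; the small odd semi-cliques in the class --- such as \(K_3\), and \(K_5\) minus an edge, whose E-subgraph is a single triangle --- show that the characterization cannot be collapsed to one graph and that the extremal analysis must be carried out in full. I expect the argument to hinge on a careful discharging over the path-endpoints incident to \(T\), combined with the single-triangle-block restriction of~(ii), to guarantee that every merge fails only when each even vertex near \(T\) is saturated, which is the structural hallmark of an odd semi-clique.
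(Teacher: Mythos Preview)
The statement is Conjecture~\ref{conj:strong-gallai}, which the paper does \emph{not} prove---not in general, and not even for the restricted class of graphs whose E-subgraph is a subgraph of a graph in~\(\mathcal{G}\). What the paper actually establishes for that class is Theorem~\ref{thm:theorems-strong}: every such graph is either a Gallai graph or a SET graph (a graph whose E-subgraph is a single triangle and in which every odd vertex has at least two even neighbors). The paper states explicitly that the family \(\mathcal{S}\) of SET graphs contains non-odd-semi-cliques for which the bound \(\lfloor n/2\rfloor\) could not be guaranteed, and the concluding remarks say that sharpening Theorem~\ref{thm:theorems-strong} to Conjecture~\ref{conj:strong-gallai} amounts to verifying the conjecture for SET graphs---something the authors did only computationally, up to eleven vertices.

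Your proposal is therefore attempting something strictly stronger than what the paper achieves, and the gap sits exactly where you yourself flag the ``main obstacle'': the passage from ``no path-merge is possible'' to ``\(G\) is an odd semi-clique.'' The sketch you give---that rigidity propagates, forces near-complete neighborhoods around the triangle \(T\), and pushes \(|E(G)|\) above \(\lfloor n/2\rfloor(n-1)\)---is a hope, not an argument. The paper's machinery (Fan subgraphs, addible sets, Claims~\ref{claim:no-hanging-ESET}--\ref{claim:full-vertices3} in the proof of Theorem~\ref{thm:main-theorem-1}) does drive a minimal counterexample down to a SET graph, but then stops; SET graphs can have edge count well below the odd-semi-clique threshold while still resisting the endpoint-merging you describe, since every odd vertex already has two neighbors in the unique even triangle and there is no evident slack to exploit. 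Your proposed discharging over path endpoints incident to \(T\) is not developed enough to distinguish odd-semi-clique SET graphs from the rest, and the authors evidently found no such argument either. In short, the proposal does not close the gap the paper leaves open; it restates it.
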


Graphs \(G\) for which \(\pn(G)\leq\floor{n/2}\) are called \emph{Gallai graphs}.
Botler, Coelho, Lee, and Sambinelli~\cite{BoSaCoLe} verified Conjecture~\ref{conj:strong-gallai} for graphs with treewidth at most~\(3\) by proving that a partial \(3\)-tree are either Gallai graphs, or one of the two odd semi-cliques that are partial \(3\)-trees (\(K_3\) and \(K_5-e\)).
They also prove~\cite{BoSaCoLe-arxiv} that a graph with maximum degree at most \(4\) is either a Gallai graph, or one of the three odd semi-cliques with maximum degree at most  \(4\) (\(K_3\), \(K_5-e\), and \(K_5\)).
More recently, Botler, Jiménez, and Sambinelli~\cite{BoJiSa18} verified Conjecture~\ref{conj:strong-gallai} for triangle-free planar graphs by proving that every such  graph is a Gallai graph.
In this paper, we explore an intermediate statement between Conjectures~\ref{conj:gallai} and~\ref{conj:strong-gallai}.
We prove that, for the classes of graphs studied, all graphs are Gallai graphs except for a special family \(\cS\).
The family \(\cS\), which we call the \emph{SET graphs} (see Section~\ref{sec:main-result}), and for which we check Conjecture~\ref{conj:gallai}, differs from previous families of exceptions for two reasons.
First, \(\cS\) contains an infinite number of odd semi-cliques.
Second, apart from containing odd semi-cliques, \(\cS\) also contains non odd semi-cliques which we cannot guarantee the bound of \(\floor{n/2}\).
We remark that this is the first result regarding E-subgraphs to handle odd semi-cliques.
Our result can be more specifically stated as follows.

\begin{theorem}\label{thm:theorems-strong}
	If \(G\) is a connected graph such that \(\EV{G}\) is a subgraph of a graph in \(\mathcal{G}\),
	then \(G\) is a Gallai graph or \(G\in\mathcal{S}\).
\end{theorem}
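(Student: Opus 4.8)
The plan is to argue by contradiction. Suppose the statement fails and, among all counterexamples, choose \(G\) with the fewest edges and, subject to that, the fewest vertices; write \(n=|V(G)|\). Then \(G\) is connected, there is some \(H\in\mathcal{G}\) with \(\EV{G}\subseteq H\), we have \(G\notin\mathcal{S}\), and yet \(\pn(G)>\floor{n/2}\). The first phase uses the reduction lemmas of Section~\ref{sec:lemmas} — the Lovász-type operations (deleting a suitable pair of edges incident with a vertex of odd degree, suppressing a vertex of degree two, lifting a pair of edges at a cut-vertex, and so on) — to squeeze \(G\) as much as possible. Each such operation is designed to yield a connected graph with fewer edges whose E-subgraph is still a subgraph of a member of \(\mathcal{G}\), so by minimality it must be obstructed or must immediately land in \(\mathcal{S}\). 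From this I would deduce that \(G\) has minimum degree at least three, that \(H\) may be chosen minimal (with no superfluous edges), and hence that the leaf blocks of \(\EV{G}\) are tightly tied to those of \(H\).

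The second phase follows the key idea behind Theorem~\ref{thms:seminal}\eqref{thm:fan}: inspect a leaf block \(B\) of \(\EV{G}\). By condition~(i) every block of \(H\), and hence of \(\EV{G}\), is subcubic. If \(B\) is not a triangle, then \(B\) is a triangle-free subcubic leaf block, and the machinery underlying Theorem~\ref{thms:seminal}\eqref{thm:fan}, adapted to the reductions of the first phase, converts a decomposition of a strictly smaller graph into one of \(G\) of size at most \(\floor{n/2}\) — contradicting minimality. So every leaf block of \(\EV{G}\) is a triangle. Now condition~(ii) enters: a component of \(\EV{G}\) carrying a triangular leaf block sits inside an \(H\)-component with at most one triangle-containing block, and combined with the first-phase reductions (which forbid long subcubic blocks dangling off such components) this pins down the nontrivial components of \(\EV{G}\) to triangles — in the base case, single triangles. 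Thus I would reduce to the situation in which \(\EV{G}\) is a disjoint union of triangles together with isolated vertices.

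The third and \emph{hardest} phase treats a triangle component \(T=xyz\) of \(\EV{G}\). In \(G\) the vertices \(x,y,z\) have even degree, and since \(T\) is a whole component of \(\EV{G}\) every neighbour of \(x,y,z\) outside \(T\) has odd degree. The strategy is to delete the three edges of \(T\), forming \(G'=G-E(T)\) whose E-subgraph differs from \(\EV{G}\) only in a bounded neighbourhood of \(T\) (now \(x,y,z\) are odd, and at most their neighbours flip parity), to invoke minimality on \(G'\) for a near-optimal decomposition, and then to reinsert the triangle: two of its edges can be absorbed into paths that are extended through odd-degree neighbours of \(x,y,z\), leaving just one ``bad'' edge per triangle. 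A careful accounting then shows that either these bad edges can be paired across triangles, or otherwise redistributed, so that \(\pn(G)\le\floor{n/2}\) — a contradiction — or else the local pattern around the triangles is forced to be so rigid (very few odd-degree neighbours, heavily shared between triangles, producing a near-complete configuration) that \(G\) is one of the SET graphs described in Section~\ref{sec:main-result}. The odd-semi-clique boundary cases — \(K_3\), \(K_5-e\), and their infinite analogues — surface precisely here, where the bound \(\floor{n/2}\) genuinely fails; but all of these lie in \(\mathcal{S}\), contradicting \(G\notin\mathcal{S}\), and the proof is complete.

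The main obstacle is this third phase. The triangle-reinsertion argument branches according to how the odd-degree neighbours of a triangle component are interlinked, whether distinct triangle components share neighbours, and how these local interactions cascade to the whole graph; maintaining the path count at \(\floor{n/2}\) across all branches while isolating exactly the configurations that have to be quarantined into \(\mathcal{S}\) is the crux of the argument. The delicate point is to calibrate \(\mathcal{S}\) so that it is neither too large — otherwise one would be wrongly claiming the Gallai bound fails — nor too small — otherwise some configuration would be left unaccounted for.
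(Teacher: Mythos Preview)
Your Phase~3 contains a concrete error that undermines the whole reinsertion strategy: if \(T=xyz\) is a triangle and you form \(G'=G-E(T)\), then each of \(x,y,z\) loses exactly \emph{two} incident edges, so \(d_{G'}(x)=d_G(x)-2\) and the parity of \(x\) (and of \(y,z\)) is \emph{unchanged}. Thus \(x,y,z\) remain even in \(G'\), contrary to your claim that ``now \(x,y,z\) are odd'', and their odd neighbours do not flip parity either. Consequently \(\EV{G'}\) still contains \(x,y,z\), the induction hypothesis does not simplify in the way you describe, and the ``absorb two edges, one bad edge left'' bookkeeping never gets started. This is not a cosmetic slip: any scheme that deletes only the triangle's edges preserves the parity of its three vertices, so it cannot by itself shrink the even-subgraph.

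More broadly, your sketch omits the actual engine of the argument. The paper never deletes triangle edges in isolation; it removes carefully chosen \emph{stars} centred at a vertex \(u\) together with selected matching edges from neighbouring triangles (the ``Fan subgraphs'' of Definition~\ref{def:fan}), applies minimality to the resulting graph, and then restores the deleted edges using Fan's addibility machinery (Lemmas~\ref{lemma:fan2}, \ref{lemma:fan4}, \ref{lemma:induced-matching}, \ref{lemma:extra-edges}) --- comparing the number of passing neighbours with available path-ends to append edges without increasing the count. None of this appears in your plan, and the ``Lov\'asz-type operations'' you invoke in Phase~1 (suppressing degree-\(2\) vertices, lifting at cut-vertices) are not used and do not obviously preserve \(\EV{G}\subseteq H\in\mathcal{G}\). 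Structurally, the paper proves Theorem~\ref{thm:theorems-strong} by first establishing the case \(\Delta(\EV{G})\leq 3\) (Theorem~\ref{thm:main-theorem-1}, where the seven claims and the SET-graph analysis live) and then reducing the general case to it: once every leaf block of \(\EV{G}\) is a triangle, a vertex of E-degree \(\geq 4\) would have to be a cut-vertex of \(H\) separating two triangle-containing blocks, contradicting condition~(ii). You conflate this short reduction with the much harder proof of Theorem~\ref{thm:main-theorem-1}, and your proposed route to the latter is, for the parity reason above, not viable.
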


This work is organized as follows.
In Section~\ref{sec:lemmas} we present some technical lemmas.
In Section~\ref{sec:main-result}, we verify Conjecture~\ref{conj:gallai} for the special case of graphs whose E-subgraphs have maximum degree at most~\(3\);
In Section~\ref{sec:further}, we verify Conjecture~\ref{conj:gallai} for graphs \(G\) for which \(\EV{G}\) is a subgraph of a graph in \(\mathcal{G}\); and in Section~\ref{sec:concluding}, we present some concluding remarks.

\smallskip
\noindent\textbf{Notation.}\label{sec:notation}
The basic terminology and notation used in this paper are standard (see, e.g.~\cite{BoMu08}). 
Given a graph \(G\), we denote its vertex set by \(V(G)\) and its edge set by \(E(G)\).
The set of neighbors of a vertex \(u\) in a graph \(G\) is denoted by \(N_G(u)\) and its degree by \(d_G(u)\).
When \(G\) is clear from the context, we simply write \(N(u)\) and  \(d(u)\).
Since \(G\) is simple, we always have \(d_G(u) = |N_G(u)|\).

A graph \(H\) is a \emph{subgraph} of a graph \(G\), denoted by \(H \subseteq G\), if \(V(H) \subseteq V(G)\) and \(E(H) \subseteq E(G)\).
Given a set of vertices \(X \subseteq V(G)\), we say that \(H\) is the subgraph of \(G\) \emph{induced by \(X\)}, denoted by \(G[X]\), if  \(V(H) = X\) and \(E(H) = \{xy \in E(G) \colon x,y \in X\}\).
Given a set of edges \(Y \subseteq E(G)\), we say that \(H\) is the subgraph of \(G\) \emph{induced by \(Y\)}, denoted by \(G[Y]\), if  \(E(H) = Y\) and \(V(H) = \{x \in V(G) \colon xy \in Y\}\).
For ease of notation, when convenient, we write simply \(Y\) to refer to the graph \(G[Y]\).
Given \(X \subseteq V(G)\), we define \(G \delvset X=G[V(G) \setminus X]\).
In the case that \(X = \{u\}\), we simply write \(G \delv u\).
Given a set~\(Y\) of edges, we define the graphs \(G \deleset Y=(V(G),E(G)\setminus Y)\).
As before, in the case that \(Y = \{e\}\), we simply write \(G \dele e\).

A \emph{path} \(P\) in a graph \(G\) is a sequence \(u_0u_1\cdots u_\ell\) of distinct vertices in \(V(G)\) such that~\(u_iu_{i+1}\in E(G)\), for~\(i=0,\ldots,\ell-1\).
We say that \(u_0\) and \(u_\ell\) are the \emph{end vertices} of \(P\), and that \(P\) \emph{joins} \(u_0\) and \(u_\ell\).
When convenient, we consider a path as the subgraph of~\(G\) induced by the set of edges \(\{u_iu_{i+1} \colon i = 0, \ldots, \ell - 1\}\).
A \emph{shortest} path joining two vertices \(u\) and \(v\) 
is a path that joins \(u\) and \(v\) with a minimum number of edges.

Given a vertex \(u\) of a graph \(G\), we say that \(u\) is an odd (resp. even) vertex if its degree is odd (resp. even).
Analogously, we say that a neighbor \(v\) of \(u\) is an odd (resp. even) neighbor of \(u\) if \(v\) has odd (resp. even) degree.
Given a path decomposition \(\D\) of a graph \(G\) and a vertex \(u\in V(G)\), we denote by \(\D(u)\) the number of paths in \(\D\) that have \(u\) as an end vertex.
It is not hard to check that \(\D(u) \equiv d(u) \pmod{2}\).
In particular, if \(u\) is an odd vertex, we have \(\D(u) \geq 1\), for any path decomposition \(\D\) of \(G\).

\section{Technical Lemmas}\label{sec:lemmas}

In this section we present some technical results used throughout our proof.
Following the strategy presented by Fan~\cite{Fan05}, our technique relies in the following definition.

\begin{definition}\label{def:fan-addible}
    Let \(u\) be a vertex in a graph \(G\) and let \(B\) be a set of edges incident to~\(u\). 
    Let \(G' = G \setminus B\),  let \(\D'\) be a path decomposition of \(G'\),
    and let \(A = \{ux_i \colon 1 \leq i \leq k\}\) be a subset of \(B\).
    We say that \(A\) is \emph{addible towards \(u\)} (resp.\ \emph{addible outwards \(u\)}) \emph{with respect to \(\D'\)} if \(G' + A\) admits a path decomposition \(\D\) such that
    \begin{enumerate}[(i)]
        \item\label{def:fan-addible1} \(|\D|=|\D'|\);
        \item\label{def:fan-addible2} \(\D(u) = \D'(u) + |A|\) and \(\D(x_i) = \D'(x_i) -1\), for \(1\leq i\leq k\) 
        \item[]                       (resp.\ \(\D(u) = \D'(u) - |A|\) and \(\D(x_i) = \D'(x_i) +1\), for \(1\leq i\leq k\));
        \item\label{def:fan-addible3} \(\D(v) = \D'(v)\) for each \(v \in V(G)\setminus \{u,x_1,\ldots,x_k\}\).
    \end{enumerate}
    In these cases, we say that  \(\D\) is an \emph{\(A\)-transformation} of \(\D'\) \emph{towards} (resp.\ \emph{outwards}) \(u\).
    For simplifying the notation, when \(k = 1\), we write \(ux_1\)-transformation instead of \(\{ux_1\}\)-transformation.
\end{definition}

Definition~\ref{def:fan-addible} is an extension of the definition of addible~\cite[Definition~3.2]{Fan05}.
In fact, the present definition of \emph{addible towards} precisely matches the definition of \emph{addible at} given by Fan~\cite{Fan05}.
The next observation is used frequently in our proof.
 \begin{remark}\label{remark:addible-concatenation}
Let \(B\) be a set of edges incident to a vertex \(u\) of a graph \(G\).
     Let \(\D'\) be a path decomposition of \(G'=G\setminus B\), and
     \(A \subset B\) be an addible set towards (resp.\ outwards) \(u\) with respect to \(\D'\),
     and  let \(\D''\) the an \(A\)-transformation of \(\D'\).
     If \(A'  \subset B \setminus A\) is addible towards (resp.\ outwards) \(u\) with respect to \(\D''\),
     then \(A\cup A'\) is addible towards (resp.\ outwards) \(u\) with respect to \(\D'\).
 \end{remark}

The next two lemmas are results of Fan~\cite[Lemmas 3.4 and 3.6]{Fan05}. 

\begin{lemma}\label{lemma:fan2}
    Let \(G\) be a graph and \(uv \in E(G)\). 
    Suppose that \(\D'\) is a path decomposition of \(G' = G \setminus {uv}\). 
    If \(\D'(v) > |\{w\in N_{G'}(u) \colon \D'(w) = 0\}|\), then \(uv\) is addible towards \(u\) with respect to \(\D'\).
\end{lemma}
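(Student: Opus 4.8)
The plan is to dispose of two routine configurations and then concentrate on one genuinely delicate one.

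\emph{Easy configurations.} Suppose first that some path $P\in\D'$ ending at $v$ satisfies $u\notin V(P)$. Then $P$ together with the edge $uv$ is again a path, and $\D:=(\D'\setminus\{P\})\cup\{P+uv\}$ is a path decomposition of $G:=G'+uv$ with $|\D|=|\D'|$ which turns the end of $P$ at $v$ into an end at $u$ and changes no other end; hence $\{uv\}$ is addible towards $u$ with respect to $\D'$. Suppose instead that there are two distinct paths $P_1,P_2\in\D'$ ending at $v$ with $V(P_1)\cap V(P_2)=\{v\}$ (hence with distinct other ends): replacing $P_1,P_2$ by their concatenation at $v$ and adjoining $\{uv\}$ as a new path again yields a suitable $\D$. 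So from now on neither situation occurs; in particular $\D'(v)\geq 1$ and every path of $\D'$ ending at $v$ meets $u$.

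\emph{A useful neighbour of $u$.} For each path $P\in\D'$ ending at $v$ let $w_P$ be the neighbour of $u$ on $P$ nearer to $v$; since these paths are edge-disjoint, the edges $w_Pu$ are pairwise distinct, so $W:=\{w_P\}$ consists of exactly $\D'(v)$ neighbours of $u$ in $G'$. By the hypothesis $\D'(v)>|\{w\in N_{G'}(u):\D'(w)=0\}|$, some $w^*\in W$ satisfies $\D'(w^*)\geq 1$; fix the path $P\in\D'$ ending at $v$ with $w_P=w^*$ and a path $P^*\in\D'$ having $w^*$ as an end, noting that $w^*$ is an interior vertex of $P$, so $P\neq P^*$.

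\emph{The exchange.} If $u\notin V(P^*)$, replace $P$ by the path obtained from it by deleting $w^*u$ and inserting $uv$ (this re-routes $P$ to run $\ldots,w^*,\ldots,v,u,\ldots$, a simple path from $w^*$ to the far end of $P$), and replace $P^*$ by $P^*$ extended at $w^*$ by the edge $w^*u$. One checks that these are simple paths, edge-disjoint from each other and from the rest, that together with $\D'\setminus\{P,P^*\}$ they cover $E(G)$, and that the only net changes to endpoint counts are $\D(u)=\D'(u)+1$ and $\D(v)=\D'(v)-1$. If $u\in V(P^*)$ the naive exchange fails — for instance $u$ may then have degree $5$ in $P\cup P^*\cup\{uv\}$, which therefore is not a union of two paths — so one iterates: $P^*$ ends at $w^*$ and meets $u$, hence it determines a further neighbour of $u$ (the one preceding $u$ on $P^*$ as seen from $w^*$); pick a path of $\D'$ ending there and continue. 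Since $u$ has finite degree this stops, and the union of all paths produced, together with $\{uv\}$, is re-decomposed into the same number of simple paths, again realising ``one fewer end at $v$, one more at $u$''.

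\emph{The main obstacle.} The delicate point is the case $u\in V(P^*)$: one must show that the iteration always terminates at a configuration that re-decomposes as required, and it is here that the hypothesis $\D'(v)>|\{w\in N_{G'}(u):\D'(w)=0\}|$ is needed in full — not merely to produce one good $w^*$ — since a priori the iteration could run into a neighbour of $u$ that is the end of no path of $\D'$. Once termination is secured, one still has to verify that the re-decomposition consists of edge-disjoint simple paths, and a handful of degenerate coincidences among $u$, $v$, $w^*$ and the path endpoints (e.g.\ $u$ being an end rather than an interior vertex of a path in play, or two relevant paths sharing a far end) require separate, routine checks. The endpoint bookkeeping itself is straightforward throughout.
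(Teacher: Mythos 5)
You should note first that the paper does not prove this statement at all: it is imported verbatim from Fan (Lemma 3.4 of \cite{Fan05}), so there is no in-paper argument to compare against; your attempt has to stand on its own. Its first half does: extending a path of \(\D'\) that ends at \(v\) and avoids \(u\), the pigeonhole step producing a gate \(w^*\in N_{G'}(u)\) with \(\D'(w^*)\geq 1\) (the edges \(w_Pu\) are distinct because the paths are edge-disjoint, and \(w_P\neq v\) because \(uv\notin E(G')\)), and the two-path exchange when some path \(P^*\) ending at \(w^*\) avoids \(u\) are all correct, and this is exactly the natural opening of Fan's argument.

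The genuine gap is the case you yourself flag as ``the main obstacle'': when every path of \(\D'\) ending at \(w^*\) passes through \(u\) (and this case is not vacuous; e.g.\ \(G'\) with decomposition \(\{vwua,\ wbuc,\ bd\}\) satisfies the hypothesis for the edge \(uv\), yet the unique path ending at \(v\) and the unique path ending at its gate \(w\) both contain \(u\)). Here you only gesture at an iteration — pass to the neighbour of \(u\) preceding it on \(P^*\), pick a path ending there, continue — and assert that ``since \(u\) has finite degree this stops'' and that the accumulated paths ``re-decompose'' with the right end counts. Neither assertion is justified: finiteness of \(d(u)\) does not by itself prevent the procedure from revisiting edges at \(u\) or cycling, the hypothesis \(\D'(v)>|\{w\in N_{G'}(u)\colon \D'(w)=0\}|\) only guarantees a non-passing gate among the paths ending at \(v\) and does not transfer to the later stages (after one exchange the counts change, e.g.\ \(v\) itself may become a passing neighbour of \(u\)), and the claim that the final edge set splits into the same number of simple paths with one fewer end at \(v\) and one more at \(u\) is precisely the content of the lemma in this case, not a consequence of the construction as described. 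A complete proof must specify the exchange performed when the auxiliary path contains \(u\) (one workable choice: reroute that path through the edge \(w^*u\), which expels a different edge at \(u\) as the new ``pending'' edge) and then prove termination and correctness of the resulting process — this is where the real work of Fan's lemma lies, and it is missing from your write-up.
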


Lemma~\ref{lemma:fan2} motivates the following definitions.
Let \(\D\) be a path decomposition of a graph \(G\) and let \(u\) be a vertex of \(G\).
We say that a vertex \(u\) is \emph{passing} in \(\D\) if \(\D(u) = 0\).
Thus, we say that vertices in \(\{v\in N_{G'}(u) \colon \D'(v) = 0\}\) are the \emph{passing neighbors} of \(u\) in \(\D'\).
Lemma~\ref{lemma:fan2} then says that
if in a path decomposition \(\D'\) of \(G\dele uv\) there are more paths having \(v\) 
as end vertex than passing neighbors of \(u\), 
then \(uv\) is addible towards \(u\) with respect to \(\D'\).

\begin{lemma}\label{lemma:fan4}
    Let \(u\) be a vertex in a graph \(G\) and let \(G' = G\setminus\{ux_1,\ldots,ux_h\}\), where \(x_i\in N_G(u)\).
    Suppose that \(\D'\) is a path decomposition of \(G'\) with \(\D'(v)\geq 1\) for every \(v\in N_G(u)\). 
    Then, for any \(x\in\{x_1,\ldots,x_h\}\), there is \(A\subseteq\{ux_1,\ldots,ux_h\}\) such that
    \begin{itemize}
        \item \(ux\in A\) and \(|A|\geq\left\lceil \frac{h}{2}\right\rceil\); and
        \item \(A\) is addible towards \(u\) with respect to \(\D'\)
    \end{itemize}
\end{lemma}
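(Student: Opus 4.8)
The plan is to construct \(A\) incrementally, repeatedly invoking Lemma~\ref{lemma:fan2} and gluing the resulting transformations together with Remark~\ref{remark:addible-concatenation}, supplemented by some direct surgery on the paths of the decomposition. The bookkeeping quantity to watch is the number of passing neighbours of \(u\): by hypothesis \(u\) has none in \(\D'\), and adding an edge \(ux_i\) towards \(u\) creates exactly one new passing neighbour — namely \(x_i\) — precisely when \(\D'(x_i)=1\). Call \(ux_i\) \emph{light} if \(\D'(x_i)=1\) and \emph{heavy} otherwise, and let \(h_1\) (resp.\ \(h_2\)) be the number of light (resp.\ heavy) edges, so \(h=h_1+h_2\).

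First I would dispose of the easy edges. As long as only heavy edges have been added, \(u\) still has no passing neighbour, so Lemma~\ref{lemma:fan2} lets us add any remaining heavy edge; iterating and applying Remark~\ref{remark:addible-concatenation} at each stage, all \(h_2\) heavy edges can be added simultaneously. Afterwards \(u\) still has no passing neighbour, so one further light edge may then be added, again by Lemma~\ref{lemma:fan2}. Choosing this light edge to be \(ux\) when \(ux\) is light — and noting that \(ux\) lies in the heavy batch when \(ux\) is heavy — we obtain an addible set \(A_0\) towards \(u\) with \(ux\in A_0\) and \(|A_0|\ge h_2+\min\{1,h_1\}\). If \(h_1\le h_2+1\) this already gives \(|A_0|\ge\lceil h/2\rceil\), so the substantive case is when many light edges remain.

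For those remaining light edges I would use direct surgery. In the current decomposition each such \(x_i\) is the end vertex of a \emph{unique} path \(P_i\). Pair up the remaining light edges; for a pair \(\{ux_i,ux_j\}\) with, say, \(u\notin V(P_i)\), simply extend \(P_i\) along the new edge \(ux_i\) so that it terminates at \(u\), which adds \(ux_i\) towards \(u\) and alters only \(P_i\). Performing such a move for one edge of each pair, in an order chosen so that earlier moves do not spoil later ones, and concatenating via Remark~\ref{remark:addible-concatenation}, adds at least \(\lfloor(h_1-1)/2\rfloor\) further light edges. Combining this with \(A_0\) and using \(h=h_1+h_2\), a short computation (both parities of \(h_1\) reduce to \(h_2\ge\lceil h_2/2\rceil\)) gives \(|A|\ge h_2+1+\lfloor(h_1-1)/2\rfloor\ge\lceil h/2\rceil\), still with \(ux\in A\).

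The main obstacle is precisely this last surgery. One must handle the degenerate configurations in which naive extension is unavailable: when the two terminal paths of a pair coincide — so that \(P_i=P_j\) has both \(x_i\) and \(x_j\) as ends, and only one of \(ux_i,ux_j\) can be absorbed by extending it at one end — and, more seriously, when \(u\in V(P_i)\cap V(P_j)\), where one instead falls back on Lemma~\ref{lemma:fan2} against the current passing-neighbour count, or reroutes the two pieces of \(P_i\) and \(P_j\) lying on either side of \(u\). One must also arrange the heavy-edge extensions of the first step, and the successive pair moves, so as not to route too many light-terminal paths through \(u\), and verify that all the moves remain pairwise compatible so that Remark~\ref{remark:addible-concatenation} genuinely applies at each step.
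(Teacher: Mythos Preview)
The paper does not prove Lemma~\ref{lemma:fan4} at all; it is quoted from Fan~\cite[Lemma~3.6]{Fan05} without proof, so there is no ``paper's own proof'' to compare against. I can only evaluate your sketch on its own merits.

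The first phase---adding all heavy edges one by one via Lemma~\ref{lemma:fan2}, then one light edge, each time checking that \(u\) still has no passing neighbour---is correct, and the arithmetic showing \(h_2+1+\lfloor(h_1-1)/2\rfloor\geq\lceil h/2\rceil\) is fine. The trouble is entirely in the surgery phase for the remaining light edges, where the obstacles you list are real and you do not actually resolve them.

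Three concrete gaps. First, Lemma~\ref{lemma:fan2} is a black box: the \(ux_i\)-transformation it produces may reroute paths arbitrarily, subject only to the end-vertex counts in Definition~\ref{def:fan-addible}. So after the heavy-edge phase, the unique path \(P_j\) ending at a light \(x_j\) may now pass through \(u\) even if it did not in \(\D'\); your proposed fix, ``arrange the heavy-edge extensions \ldots\ so as not to route too many light-terminal paths through \(u\)'', asks for control that Lemma~\ref{lemma:fan2} simply does not give. Second, the pairing of light edges is arbitrary and need not be compatible with the path structure: nothing guarantees that each pair contains an index \(i\) with \(u\notin V(P_i)\), and if \(P_i=P_j\) for indices in \emph{different} pairs then extending at \(x_i\) destroys the only path ending at \(x_j\), so ``an order chosen so that earlier moves do not spoil later ones'' may not exist. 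Third, your fallback when \(u\in V(P_i)\cap V(P_j)\) does not work as stated: Lemma~\ref{lemma:fan2} is unavailable (a light \(x_j\) has \(\D\)-value~\(1\), equal to the passing-neighbour count), and naively recombining the four pieces of \(P_i\) and \(P_j\) on either side of \(u\) can create closed walks or change \(\D(v)\) at vertices other than \(u,x_i,x_j\), violating Definition~\ref{def:fan-addible}\eqref{def:fan-addible3}.

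Fan's own proof (his Lemma~3.6) relies on an intermediate structural statement (his Lemma~3.5, which sits between the two lemmas the present paper quotes) that gives finer control than Lemma~\ref{lemma:fan2}; your sketch would essentially need to recover that ingredient to close the gap.
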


Given a graph \(G\) and a set \(M\) of edges of \(G\), we denote by \(G[M]\) the subgraph of \(G\) induced by the vertices incident to edges of \(M\).
We say that a set of edges \(M\subseteq E(G)\) is an \emph{induced matching} if the set edges of \(G[M]\) is precisely \(M\).
Note that every subset of an induced matching is also an induced matching.
The next two lemmas are used in the proof of Claim~\ref{claim:one-triangle-neighbor}.

\begin{lemma}\label{lemma:induced-matching}
    Let \(G\) be a graph and let \(M=\{e_1,\ldots, e_k\}\) be an induced matching in \(G\), where \(e_i = u_iv_i\) for \(i = 1, \ldots, k\).
    Let \(\D'\) be a path decomposition of \(G' = G \setminus M\).
    If, for \(i=1,\ldots, k\), the vertex \(u_i\) has no passing neighbor in \(\D'\) and \(\D'(v_i)\geq 1\),
    then there is a path decomposition \(\D\) of \(G\)
    such that 
    \begin{itemize}
        \item \(|\D| = |\D'|\);
        \item  for \(i=1,\ldots, k\), we have \(\D(u_i) = \D'(u_i)+1\) and \(\D(v_i) = \D'(v_i)-1\); and
        \item \(\D(w) = \D'(w)\) for every \(w\in V(G)\setminus V(M)\).
    \end{itemize}
\end{lemma}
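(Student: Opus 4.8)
The plan is to reinsert the edges of $M$ into $G'$ one at a time, in the order $e_1,\ldots,e_k$, and to invoke Lemma~\ref{lemma:fan2} at each step to show that the edge currently being added is addible towards its $u$-endpoint. Formally, set $\D_0=\D'$ and $G_0=G'$, and for $i=1,\ldots,k$ put $G_i=G_{i-1}+e_i$ (so $G_k=G'+M=G$); I will prove by induction that $G_i$ admits a path decomposition $\D_i$ with $|\D_i|=|\D'|$, with $\D_i(u_j)=\D'(u_j)+1$ and $\D_i(v_j)=\D'(v_j)-1$ for all $j\le i$, and with $\D_i(w)=\D'(w)$ for every other vertex $w$. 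Taking $i=k$ then gives exactly the conclusion of the lemma.

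For the inductive step I would apply Lemma~\ref{lemma:fan2} to the edge $u_iv_i$ and the decomposition $\D_{i-1}$ of $G_i\setminus u_iv_i=G_{i-1}$. First note $N_{G_{i-1}}(u_i)=N_{G'}(u_i)$, since $M$ is a matching and hence none of $e_1,\ldots,e_{i-1}$ is incident to $u_i$. Next, the only vertices whose $\D$-value differs between $\D'$ and $\D_{i-1}$ are $u_1,v_1,\ldots,u_{i-1},v_{i-1}$; none of these equals $v_i$ (distinct edges of a matching are vertex-disjoint), so $\D_{i-1}(v_i)=\D'(v_i)\ge 1$, and --- this is the one place the \emph{induced} hypothesis is used --- none of these lies in $N_{G'}(u_i)$, because $u_iu_j,u_iv_j\notin E(G)$ for $j\ne i$ (these pairs of vertices lie in $V(M)$ and are not edges of $M$). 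Therefore every neighbour of $u_i$ in $G_{i-1}$ keeps its $\D'$-value, which is at least $1$ by hypothesis, so $u_i$ has no passing neighbour in $\D_{i-1}$; thus $\D_{i-1}(v_i)\ge 1>0=|\{w\in N_{G_{i-1}}(u_i):\D_{i-1}(w)=0\}|$ and Lemma~\ref{lemma:fan2} produces a path decomposition $\D_i$ of $G_i$ with $|\D_i|=|\D_{i-1}|=|\D'|$, $\D_i(u_i)=\D_{i-1}(u_i)+1$, $\D_i(v_i)=\D_{i-1}(v_i)-1$ and $\D_i(w)=\D_{i-1}(w)$ otherwise. Combining with the induction hypothesis, and once more using that $u_i$ and $v_i$ are distinct from all previously affected vertices, yields the asserted values of $\D_i$.

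I do not expect a genuine obstacle here: the argument is essentially careful bookkeeping of which vertices have a perturbed $\D$-value, and the hypotheses are used only to ensure that the additions performed so far neither create a passing neighbour of the next $u_i$ nor push $\D(v_i)$ below $1$, both of which are immediate from $M$ being an induced matching. One could instead package the perturbation tracking through Remark~\ref{remark:addible-concatenation}, but since the distinguished vertex changes from step to step it seems cleanest to run the induction directly as above.
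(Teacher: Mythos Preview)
Your proposal is correct and follows essentially the same approach as the paper: both arguments add the edges of \(M\) one at a time and apply Lemma~\ref{lemma:fan2} at each step, using the induced-matching hypothesis to guarantee that the vertices whose \(\D\)-value has been perturbed so far (in particular the \(v_j\) that may have become passing) are not neighbours of the current \(u_i\). The paper phrases this as induction on \(|M|\) (handle \(e_1\), then apply the induction hypothesis to \(M\setminus\{e_1\}\) in \(G'+e_1\)), while you unroll the same induction forward with explicit bookkeeping, but the content is identical.
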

\begin{proof}
    The proof follows by induction on the size of \(M\).
    If \(k=0\), then \(M=\emptyset\), and the statement holds with \(\D = \D'\).
    Thus, assume \(k\geq 1\).
    Since \(u_1\) has no passing neighbor in \(\D'\),
    and \(\D'(v_1)\geq 1\),
    by Lemma~\ref{lemma:fan2}, \(e_1\) is addible towards \(u_1\) with respect to \(\D'\).
    Let \(G''=G'+e_1\), and let \(\D''\) be the \(e_1\)-transformation of \(\D'\) towards \(u_1\).
    Note that \(\D''(u_1) = \D'(u_1)+1\), \(\D''(v_1) = \D'(v_1)-1\), and \(\D''(w) = \D'(w)\), for every \(w \in V(G) \setminus \{u_1, v_1\}\).
    Thus, \(v_1\) is the only vertex that possibly became a passing vertex through the transformation from \(\D'\) to \(\D''\).
    Now, consider \(M' = M \setminus \{e_1\} = \{e_2,\ldots,e_k\}\).
    Clearly, \(M'\) is an induced matching and \(G''  = G \deleset M'\).
    Since \(\{e_1,\ldots, e_k\}\) is an induced matching,
    \(v_1\) is not adjacent to \(u_2,\ldots,u_k\) in \(G''\).
    Thus \(u_i\) has no passing neighbor in \(\D''\) and \(\D''(v_i)\geq 1\), for \(i=2,\ldots,k\),
    and hence, by induction hypothesis,  there is a path decomposition \(\D\) of \(G\)
    such that \(|\D| = |\D''| = |\D'|\);
    for \(i=2,\ldots,k\), we have \(\D(u_i) = \D''(u_i)+1 = \D'(u_i)+1\) and \(\D(v_i) = \D''(v_i)-1 = \D'(v_i)-1\);
    and \(\D(w) = \D''(w)\) for every \(w\in V(G)\setminus\{u_i,v_i\colon i=2,\ldots,k\}\).
    Note that \(\D(u_1) = \D''(u_1)= \D'(u_1)+1\) and \(\D(v_1) = \D''(v_1) = \D'(v_1)-1\),
    and \(\D(w) = \D''(w)=\D'(w)\) for every \(w\notin\{u_i,v_i\colon i=1,\ldots,k\}\),
    as desired.
\end{proof}

It is not hard to check that the condition that \(\{e_1,\ldots,e_k\}\) is an induced matching in  Lemma~\ref{lemma:induced-matching}
may be easily replaced by the condition that 
\(N(u_i)\cap\{v_j\colon j=1,\ldots,k\} = \{v_j\}\),
i.e., 
\(u_i\)'s may be adjacent to other \(u_i\)'s
and \(v_i\)'s may be adjacent to other \(v_i\)'s,
but each \(u_i\) is adjacent to precisely one \(v_j\), namely, \(v_i\).

In what follows, given a graph \(G\) and a vertex \(u\),
we denote by \(E_G(u)\) the set of edges of \(G\) that are incident to \(u\).
The next lemma says, roughly, that after removing a set \(E'\subseteq E_G(u)\) of edges incident to an odd vertex \(u\),
and applying Lemma~\ref{lemma:fan4},
any further transformation outward \(u\) leaves more paths having \(u\) as an end vertex,
than the number of edges that remain to add.

\begin{lemma}\label{lemma:extra-edges}
    Let \(u\) be an odd vertex in a graph \(G\), \(B \subseteq E_G(u)\), and \(\D_0\) be a path decomposition of \(G \setminus B\).
    Let \(A_1\subseteq B\) be an addible set towards \(u\) with respect to \(\D_0\),
    and let \(A_2\subseteq B\setminus A_1\) be an addible set outwards \(u\) with respect to 
    the \(A_1\)-transformation \(\D_1\) of \(\D_0\).
    If \(\D_2\) is the \(A_2\)-transformation of \(\D_1\) and \(|A_1| \geq \ceil{|B|/2}\),
    then \(\D_2(u) > |B|-|A_1\cup A_2|\).
\end{lemma}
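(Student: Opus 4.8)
The plan is to track the value of $\D(\cdot)$ at $u$ through the two transformations and then invoke the parity of $\D_0(u)$ to dispose of a single boundary case. First I would record, using Definition~\ref{def:fan-addible}, that since $\D_1$ is the $A_1$-transformation of $\D_0$ \emph{towards} $u$ we have $\D_1(u) = \D_0(u) + |A_1|$, and since $\D_2$ is the $A_2$-transformation of $\D_1$ \emph{outwards} $u$ we have $\D_2(u) = \D_1(u) - |A_2|$; hence $\D_2(u) = \D_0(u) + |A_1| - |A_2|$. Because $A_2 \subseteq B \setminus A_1$, the sets $A_1$ and $A_2$ are disjoint, so $|A_1 \cup A_2| = |A_1| + |A_2|$. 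Substituting and cancelling $|A_2|$ from both sides, the desired inequality $\D_2(u) > |B| - |A_1 \cup A_2|$ becomes equivalent to
\[
\D_0(u) \;>\; |B| - 2|A_1|.
\]

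Next I would use the hypothesis $|A_1| \geq \ceil{|B|/2}$, which gives $2|A_1| \geq 2\ceil{|B|/2} \geq |B|$, so the right-hand side above is at most $0$. If it is strictly negative, that is if $2|A_1| > |B|$, the conclusion is immediate since $\D_0(u) \geq 0$. The only remaining case is $2|A_1| = |B|$, which forces $|B|$ to be even; here $\D_0(u) \geq 0$ alone does not suffice, and I would instead use the parity observation $\D_0(u) \equiv d_{G \setminus B}(u) \pmod 2$ recalled earlier. Since $u$ is an odd vertex of $G$ and $|B|$ is even, $d_{G \setminus B}(u) = d_G(u) - |B|$ is odd, so $\D_0(u)$ is odd, whence $\D_0(u) \geq 1 > 0 = |B| - 2|A_1|$, as required.

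There is no genuine difficulty in this argument; the only subtlety is the boundary case $2|A_1| = |B|$, where the naive bound $\D_0(u) \geq 0$ is too weak and one must exploit that $u$ is odd to conclude $\D_0(u) \geq 1$. The hypothesis $|A_1| \geq \ceil{|B|/2}$ enters precisely to guarantee $2|A_1| \geq |B|$, so that this single parity case is all that needs extra care.
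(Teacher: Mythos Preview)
Your proof is correct and follows essentially the same approach as the paper. Both arguments track $\D(u)$ through the two transformations and invoke the parity of $d_{G\setminus B}(u)$ to handle the boundary case; the only cosmetic difference is that the paper first proves the intermediate bound $\D_1(u) \geq 1 + \lfloor |B|/2 \rfloor$ (splitting on the parity of $|B|$) and then subtracts $|A_2|$, whereas you reduce the target inequality directly to $\D_0(u) > |B| - 2|A_1|$ and split on whether that right-hand side is negative or zero.
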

\begin{proof}
We claim that \(\D_1(u) \geq 1 + \lfloor |B|/2\rfloor\).
    Indeed, since \(d_G(u)\) is odd, if \(|B|\) is even, then \(\D_0(u)\geq 1\), and \(\D_1(u) \geq \D_0(u) + |B|/2 \geq 1 + |B|/2 = 1+\lfloor |B|/2\rfloor\); 
    if \(|B|\) is odd, then \(\D_1(u) \geq \lceil |B|/2\rceil = 1+\lfloor |B|/2\rfloor\).
Note that \(\D_2(u) = \D_1(u) - |A_2|\).
    Since \(|B| = \lfloor |B|/2\rfloor + \lceil |B|/2\rceil\), we have 
\begin{align*}
        \D_2(u) &= \D_1(u) - |A_2| \\ 
        &\geq 1 + \lfloor |B|/2\rfloor - |A_2| \\
        &= 1 + |B| - \lceil |B|/2\rceil - |A_2|\\
        &\geq 1 + |B| - |A_1| - |A_2|.\end{align*}\end{proof}

\subsection{Absorbing lemmas}

In this section, we present some lemmas that allow Gallai graphs to absorb special subgraphs while keeping its Gallai property.
The next remark is used often in our proofs.

\begin{remark}\label{remark:gallai-components}
	If every component of a graph \(G\) is a Gallai graph,
	then \(G\) is a Gallai graph.
\end{remark}

\begin{proof}
	Let \(H_1,\ldots,H_k\) be the components of \(G\).
Thus
	\(\pn(G) = \sum_{i=1}^k\pn(H_i) \leq \sum_{i=1}^k\floor{|V(H_i)|/2} 
	\leq \floor{|V(G)|/2}\).
	Therefore \(G\) is a Gallai graph.
\end{proof}

We say that a graph \(G\) is a \emph{single even triangle graph (SET graph)}
if \(\EV{G}\) is a triangle and every odd vertex of \(G\) has at least two
even neighbors.
Since a SET graph has three even vertices, it must have odd order.
Single even triangle graphs are special cases of our proof.
Note that the graphs obtained from a complete graph on \(2k+1\) vertices
by removing a matching of size \(k-1\) 
is a SET graph that is an odd semi-clique.
Although we were not able to fully characterize which SET graphs are Gallai graphs,
checking the validity of Conjecture~\ref{conj:gallai} for them 
is a straightforward task (see Lemma~\ref{lemma:ESET-special-decomposition}).
In what follows, we extend the definition of SET graphs,
and indicate some of their vertices to be \emph{connection vertices}.
We say that a graph \(K\) 
is an \emph{extended single even triangle graph (ESET graph)}
if one of the following hold.
\begin{enumerate}[i)]
	\item\label{def:eset-1}	
	    \(K\) is a SET graph.
		In this case, every vertex of \(K\) is a \emph{connection} vertex;
\item\label{def:eset-3}	
	    \(K\) is obtained from a SET graph \(K^-\)
		by adding a new vertex \(z\) adjacent to an odd and an even vertex
		of \(K^-\).
		In this case,  \(z\) is the \emph{connection} vertex of \(K\).
\end{enumerate}
We say that an ESET is of \emph{type} \ref{def:eset-1} or \ref{def:eset-3}, 
according to the items above.
Throughout the proof we are required to absorb ESET graphs without increasing the size of the path decomposition.
The next lemma provide, for ESET graphs, path decompositions that contain two paths 
that have a fixed connection vertex as end vertex.

\begin{lemma}\label{lemma:ESET-special-decomposition}
Let \(K\) be an ESET graph on \(n\) vertices.
    If \(u\) is a connection vertex of~\(K\), then \(K\) admits a path decomposition \(\D\) of \(K\) such that \(\D(u) \geq 2\) and \(|\D|\leq \lceil n/2\rceil\).
\end{lemma}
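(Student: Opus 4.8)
The plan is to split into the two cases in the definition of ESET graph. Suppose first that $K$ is of type~\ref{def:eset-1}, i.e., $K$ is a SET graph, so $\EV{K}$ is a triangle on vertices $\{a,b,c\}$, every odd vertex has at least two even neighbors, and $n$ is odd. Here $u$ is an arbitrary vertex of $K$. I would start from a minimum path decomposition $\D_0$ of $K' = K \setminus \{ab,bc,ca\}$ (or of $K$ itself minus a suitable small set of edges at $u$), use Lemma~\ref{lemma:fan2}/Lemma~\ref{lemma:fan4} to rearrange so that many paths end at $u$, and then add back the removed triangle edges using the addibility machinery. Concretely: since every odd vertex of $K$ has at least two even neighbors, after deleting the triangle the odd vertices keep enough structure that Fan's lemmas apply. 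The key quantitative point is that $K$ has exactly three even vertices, so $|E(\EV K)| = 3$; a parity/counting argument on $\D_0$ should force $\D_0(a),\D_0(b),\D_0(c)$ to behave controllably, and then reattaching the triangle (which can always be covered by at most two paths, e.g. one path $a\,b\,c$ plus possibly absorbing $ca$ into an existing path ending at $a$ or $c$) increases the decomposition size by at most one while we arrange $\D(u)\ge 2$. The bound $|\D|\le\lceil n/2\rceil$ then follows from $\pn(K\setminus\triangle)\le\lfloor(n)/2\rfloor$-type estimates plus the at-most-one-extra-path from the triangle; since $n$ is odd, $\lfloor n/2\rfloor + 1 = \lceil n/2\rceil + 1$ is too weak, so I actually need the triangle to be absorbed with a net increase of essentially $1$ relative to $\lfloor n/2\rfloor$, landing exactly at $\lceil n/2\rceil$. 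This is the delicate accounting: the SET condition (two even neighbors per odd vertex) is exactly what lets one of the triangle edges be swallowed by a path already ending at an even vertex rather than spawning a fresh path.

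For type~\ref{def:eset-3}, $K$ is obtained from a SET graph $K^-$ by adding a pendant-ish vertex $z$ adjacent to one odd vertex $p$ and one even vertex $q$ of $K^-$, and $u = z$ is the connection vertex. I would first apply the type~\ref{def:eset-1} case to $K^-$ with connection vertex chosen as $q$ (an even vertex of the triangle $\EV{K^-}$), obtaining a decomposition $\D^-$ of $K^-$ with $\D^-(q)\ge 2$ and $|\D^-|\le\lceil (n-1)/2\rceil = \lceil n/2\rceil$ (using $n-1$ even). Now I add the two edges $zp$ and $zq$. Since $\D^-(q)\ge 2$, there is a path $P\in\D^-$ ending at $q$; extend $P$ by the edge $qz$, so now a path ends at $z$. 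For the edge $zp$: since $p$ is odd in $K^-$, $\D^-(p)\ge 1$, so there is a path $Q$ ending at $p$; extend $Q$ by $pz$ to get a second path ending at $z$ — but this would make $z$ an endpoint of two paths that is fine, except I must check the extensions are legal (the new edges are not already used and don't repeat vertices on $P$ or $Q$). If an extension is illegal because $z$ already lies on the path, I instead add $zp$ (or $zq$) as a one-edge path; to keep $|\D|\le\lceil n/2\rceil$ I can afford at most... actually adding two singleton paths costs $2$, which is too much, so I must genuinely extend at least one existing path. The safe move: extend the path ending at $q$ by $qz$ (legal since $z\notin K^-$), and then add $zp$ as its own one-edge path only if necessary; better, note the extended path now ends at $z$, and I can further extend it by $zp$ if $p$ is not on it, giving $\D(z)$ from that single path plus whatever else — re-examine parity to guarantee $\D(z)\ge 2$. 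Since $d_K(z)=2$ is even, $\D(z)$ is even, so $\D(z)\in\{0,2\}$; I just need to rule out $\D(z)=0$, i.e., ensure $z$ is an endpoint of some path, which the construction does.

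The main obstacle I anticipate is the exact counting in the type~\ref{def:eset-1} case: showing that one can simultaneously (a) keep the total number of paths down to $\lceil n/2\rceil$ after reinserting the triangle, and (b) guarantee two paths ending at the prescribed connection vertex $u$, for \emph{every} choice of $u$ (including an even vertex of the triangle, whose degree in $K$ may be large). This likely requires invoking Theorem~\ref{thms:seminal}\eqref{thm:pyber} or \eqref{thm:lovasz} on $K$ minus a carefully chosen near-spanning structure, or running Fan's transformation lemmas (Lemma~\ref{lemma:fan4}, Lemma~\ref{lemma:extra-edges}) at $u$ to harvest $\ge 2$ endpoints there, and then checking the triangle can be re-absorbed without destroying those two endpoints — the SET hypothesis that each odd vertex has $\ge 2$ even neighbors is the lever that makes the triangle edges absorbable into existing paths. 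The secondary subtlety is the legality of path extensions in type~\ref{def:eset-3}, handled by the parity observation $\D(z)\in\{0,2\}$ together with an explicit extension.
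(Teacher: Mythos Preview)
Your type~\ref{def:eset-3} argument is essentially the paper's: apply the type~\ref{def:eset-1} case to \(K^-\) at the even vertex \(q\), then extend one path ending at \(q\) and one (distinct) path ending at the odd vertex \(p\) by the edges \(qz\) and \(pz\). You over-worry about legality: since \(z\notin V(K^-)\), both extensions are automatically paths, and \(\D^-(q)\ge 2\) guarantees you can pick \(P_q^-\neq P_p^-\), so you get two distinct paths ending at \(z\) with no change in cardinality.

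Your type~\ref{def:eset-1} argument, however, has a real gap, and the paper takes a different and much cleaner route. You delete the triangle \(\{ab,bc,ca\}\); after this the three vertices \(a,b,c\) are still even (each lost two edges), so \(\EV{K'}\) consists of three isolated even vertices. You then have no control over \(\D'(u)\): if \(u\in\{a,b,c\}\) it may well be \(0\), and nothing in your sketch forces two paths to end at \(u\) while re-absorbing all three triangle edges with only one extra path. Your parenthetical alternative ``or of \(K\) itself minus a suitable small set of edges at \(u\)'' is in fact the right idea, and it is exactly what the paper does.

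The paper deletes not the triangle but the set \(S\) of edges from \(u\) to the even vertices of \(K\) (so \(|S|\in\{2,3\}\)). Then \(K'=K\setminus S\) has \emph{exactly one} even vertex, so Theorem~\ref{thms:seminal}\eqref{thm:lovasz} gives \(\pn(K')\le\lfloor n/2\rfloor\). Every neighbor of \(u\) in \(K\) is odd in \(K'\), so Lemma~\ref{lemma:fan4} yields \(B\subseteq S\) with \(|B|\ge\lceil|S|/2\rceil\) addible towards \(u\); the \(B\)-transformation \(\D''\) already has \(\D''(u)\ge |B|\). Since \(|S\setminus B|\le 1\), add the possible leftover edge as a one-edge path. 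This gives \(\D(u)\ge 2\) directly and \(|\D|\le\lfloor n/2\rfloor+1=\lceil n/2\rceil\) (using that \(n\) is odd). No triangle re-absorption or delicate accounting is needed.
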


\begin{proof}
	Let \(K\), \(n\), and \(u\) be as in the statement.
	We divide the proof according to the type of \(K\).
	
	\smallskip
	\noindent{\bf Type~\ref{def:eset-1}.}
	By the definition of ESET graph of type~\ref{def:eset-1}, \(K\) is a SET graph and \(u\) is any of its vertices.
	Let \(S\) be the set of edges that join \(u\) 
	to the even vertices of \(K\),
	and let \(K' = K \setminus S\).
	Note that \(K'\) has precisely one even vertex,
	and hence, by Theorem~\ref{thms:seminal}\eqref{thm:lovasz},
	it follows that \(\pn(K')\leq\lfloor n/2\rfloor\).
	Let \(\D'\) be a minimum path decomposition of \(K'\).
	Note that every neighbor of \(u\) in \(K\) has odd degree in \(K'\).
	Thus, by Lemma~\ref{lemma:fan4}, there is \(B\subseteq S\), with \(|B| \geq |S|/2\),
	which is addible with respect to \(\D'\).
	Let \(\D''\) be the \(B\)-transformation of \(\D'\) towards \(u\).
	{\color{black}Note that \(S\setminus B\) contains at most one edge.
	If \(S\setminus B = \{e\}\), then let \(P\) 
	be the path containing only \(e\),
	and put \(\D = \D''\cup\{P\}\),
	otherwise, put \(\D = \D''\).}
	It is not hard to check that \(\D(u)\geq 2\) and \(|\D|\leq \lceil n/2\rceil\), as desired.

	\smallskip
	\noindent{\bf Type~\ref{def:eset-3}.}
	By the definition of ESET graph of type~\ref{def:eset-3}, 
	\(K\) is obtained from a SET graph~\(K^-\) by adding a new vertex \(u\) adjacent to 
    an odd vertex \(x\) and an even vertex \(y\) of \(K^-\).
    Since \(K^-\) is an ESET of type~\ref{def:eset-1},
    \(K^-\) admits a path decomposition \(\D^-\) such that \(\D^-(y)\geq 2\) and 
    \(|\D^-|\leq\ceil{(n-1)/2}\).
    Since \(x\) is an odd vertex of \(K^-\), we have \(\D^-(x)\geq 1\).
    Let \(P_x^-\) be a path in \(\D^-\) that has \(x\) as end vertex.
    Since \(\D^-(y)\geq 2\), there are two paths in \(\D^-\)
    that have \(y\) as end vertex,
    and hence there is at least one such path, say \(P_y^-\), that is different from \(P_x^-\).
    Let \(P_x = P_x^-+ux\) and \(P_y = P_y^-+yu\),
    and hence \(\big(\D^-\setminus\{P_x^-,P_y^-\}\big)\cup\{P_x,P_y\}\) is the desired decomposition.
\end{proof}

As a direct application of Lemma~\ref{lemma:ESET-special-decomposition}, we have the following result.

\begin{lemma}\label{lemma:ESET-absorbing}
	Let \(G\) be a graph that can be decomposed into 
	an ESET graph \(K\) on \(n\) vertices
	with a connection vertex \(u\in V(K)\),
	and a path \(P\) such that \(V(P)\cap V(K)=\{u\}\).
	Then \(\pn(G)\leq \lceil n/2\rceil\).
\end{lemma}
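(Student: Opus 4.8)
The plan is to build a path decomposition of \(G\) by grafting \(P\) onto the special decomposition of \(K\) at the connection vertex \(u\). First I would apply Lemma~\ref{lemma:ESET-special-decomposition} to obtain a path decomposition \(\D_K\) of \(K\) with \(\D_K(u)\geq 2\) and \(|\D_K|\leq\ceil{n/2}\). If \(P\) has no edges (that is, \(V(P)=\{u\}\)), then \(G=K\) and \(\D_K\) already witnesses the bound, so I may assume \(P\) has at least one edge; in particular \(u\in V(P)\).

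Next I would split into two cases according to the position of \(u\) in \(P\). If \(u\) is an end vertex of \(P\), I pick any path \(Q\in\D_K\) having \(u\) as an end vertex (possible since \(\D_K(u)\geq 1\)); because \(Q\subseteq K\) and \(V(P)\cap V(K)=\{u\}\), the vertices of \(P\) other than \(u\) avoid \(V(Q)\), so \(Q\cup P\) is a path, and \((\D_K\setminus\{Q\})\cup\{Q\cup P\}\) is a path decomposition of \(G\) of the same cardinality as \(\D_K\). If \(u\) is an internal vertex of \(P\), I write \(P=P_1\cup P_2\), where \(P_1\) and \(P_2\) are the two edge-disjoint subpaths obtained by splitting \(P\) at \(u\), each having \(u\) as an end vertex. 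Since \(\D_K(u)\geq 2\), I choose two \emph{distinct} paths \(Q_1,Q_2\in\D_K\), each with \(u\) as an end vertex; as before, the internal vertices of each \(P_i\) avoid \(V(K)\), so \(Q_i\cup P_i\) is a path for \(i=1,2\), and \((\D_K\setminus\{Q_1,Q_2\})\cup\{Q_1\cup P_1,\,Q_2\cup P_2\}\) is a path decomposition of \(G\) of cardinality \(|\D_K|\).

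In either case the resulting decomposition \(\D\) of \(G\) satisfies \(|\D|=|\D_K|\leq\ceil{n/2}\), as required. I expect the only delicate points to be (i) verifying that the grafted objects are genuine simple paths rather than walks — this is exactly where the hypothesis \(V(P)\cap V(K)=\{u\}\) is used, guaranteeing that a path of \(\D_K\) and a subpath of \(P\) meet only at their shared end vertex \(u\); and (ii) noting that the internal-vertex case genuinely requires the full strength \(\D_K(u)\geq 2\) from Lemma~\ref{lemma:ESET-special-decomposition}, since there two distinct paths of \(\D_K\) ending at \(u\) must be consumed.
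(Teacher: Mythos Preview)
Your proof is correct and follows essentially the same approach as the paper: obtain the decomposition \(\D_K\) from Lemma~\ref{lemma:ESET-special-decomposition}, split \(P\) at \(u\), and graft the two pieces onto two paths of \(\D_K\) ending at \(u\). The only difference is cosmetic: the paper writes ``decompose \(P\) into two paths \(P_1\) and \(P_2\) having \(u\) as end vertex'' uniformly (implicitly allowing one of them to be trivial when \(u\) is an end vertex of \(P\)), whereas you separate out the end-vertex and trivial-\(P\) cases explicitly.
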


\begin{proof}
	Let \(G\), \(K\), \(P\), \(n\), and \(u\) be as in the statement.
	By Lemma~\ref{lemma:ESET-special-decomposition},
	\(K\) admits a path decomposition \(\D_K\) 
	such that \(\D_K(u)\geq 2\) and \(|\D_K|\leq \lceil n/2\rceil\).
	Let \(Q_1\) and \(Q_2\) be two paths in \(\D_K\) having \(u\) as end vertex.
	Decompose \(P\) into two paths \(P_1\) and \(P_2\) having \(u\) as end vertex.
	Let \(R_1 = P_1\cup Q_1\) and \(R_2 = P_1 \cup Q_2\),
	and let \(\D = \big(\D_K\setminus\{Q_1,Q_2\}\big)\cup\{R_1,R_2\}\).
	Clearly, \(\D\) is a path decomposition of \(G\) with \(|\D|\leq\lceil n/2\rceil\).
	Therefore, \(\pn(G)\leq\lceil n/2\rceil\).
\end{proof}

Let \(K\), \(G'\), and \(G = K\cup G'\) be graphs.
We say that \(K\) is a \emph{hanging ESET subgraph} of \(G\) if \(K\) is 
an ESET graph, \(V(K) \cap V(G') = \{u\}\),
and \(u\) is a connection vertex of \(K\).

\begin{lemma}\label{lemma:absorbing-all}
    Let \(G\) be a graph that contains a hanging ESET subgraph \(K\),
    and let \(G'\) be such that \(G = K\cup G'\) and \(V(K)\cap V(G') = \{u\}\).
    Then \(\pn(G)\leq\lceil|V(K)|/2\rceil+\pn(G')-1\).
\end{lemma}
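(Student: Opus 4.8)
The plan is to combine a minimum path decomposition of \(G'\) with the special decomposition of \(K\) supplied by Lemma~\ref{lemma:ESET-special-decomposition}, gluing them at \(u\) so that exactly one path is saved. Let \(\D_K\) be a path decomposition of \(K\) with \(\D_K(u)\geq 2\) and \(|\D_K|\leq\ceil{|V(K)|/2}\), and let \(\D'\) be a minimum path decomposition of \(G'\), so that \(|\D'|=\pn(G')\). Since \(V(K)\cap V(G')=\{u\}\), any path of \(\D_K\) having \(u\) as an end vertex meets any path of \(\D'\) having \(u\) as an end vertex only in \(u\); hence concatenating two such paths at \(u\) yields a genuine path of \(G\). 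I would also note that we may assume \(d_{G'}(u)\geq 1\) — this is the intended meaning of a \emph{hanging} ESET subgraph, and if \(u\) were isolated in \(G'\) then \(K\) would be a whole component of \(G\) and the displayed bound need not hold. Under this assumption \(\D'\) contains at least one path incident to \(u\).

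I would then split into two cases according to \(\D'(u)\). If \(\D'(u)\geq 1\), fix a path \(P\in\D'\) and a path \(Q\in\D_K\), each having \(u\) as an end vertex, and put \(\D=\big(\D'\setminus\{P\}\big)\cup\big(\D_K\setminus\{Q\}\big)\cup\{P\cup Q\}\); this is a path decomposition of \(G\) with \(|\D|=|\D'|+|\D_K|-1\). If \(\D'(u)=0\), then \(d_{G'}(u)\) is even and positive, so some path \(P\in\D'\) has \(u\) as an internal vertex; write \(P=P_1\cup P_2\), where \(P_1,P_2\) are the two subpaths of \(P\) from \(u\) to the end vertices of \(P\), choose two distinct paths \(Q_1,Q_2\in\D_K\) having \(u\) as an end vertex (which exist since \(\D_K(u)\geq 2\)), and put \(\D=\big(\D'\setminus\{P\}\big)\cup\big(\D_K\setminus\{Q_1,Q_2\}\big)\cup\{P_1\cup Q_1,\,P_2\cup Q_2\}\); again \(\D\) is a path decomposition of \(G\) with \(|\D|=|\D'|+|\D_K|-1\). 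In both cases \(\pn(G)\leq|\D|=|\D_K|+\pn(G')-1\leq\ceil{|V(K)|/2}+\pn(G')-1\), as required.

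I do not expect a real obstacle here beyond bookkeeping: the concatenations are legitimate exactly because \(V(K)\cap V(G')=\{u\}\), and the single saved path arises from pairing an end of a path of \(\D'\) at \(u\) with an end of a path of \(\D_K\) at \(u\) (using the second end of \(\D_K\) at \(u\) to absorb the extra half-path that appears when \(u\) is a passing vertex of \(\D'\)). The one point that needs care is the degenerate situation where \(G'\) has no edge at \(u\); as noted, this should be excluded by the definition of a hanging ESET subgraph, since otherwise \(K\) detaches as a component and the claimed \(-1\) is not available.
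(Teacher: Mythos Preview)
Your proof is correct and follows essentially the same approach as the paper: the paper takes a path \(P\in\D'\) containing \(u\), applies Lemma~\ref{lemma:ESET-absorbing} to \(K\cup P\) (which internally splits \(P\) at \(u\) and concatenates the two halves with two paths of \(\D_K\) ending at \(u\)), and then adds back \(\D'\setminus\{P\}\); you simply inline this step with an explicit case split on \(\D'(u)\). Your remark about the degenerate case \(d_{G'}(u)=0\) is well taken---the paper's proof tacitly assumes a path of \(\D'\) contains \(u\), and indeed in every application of the lemma in the paper this is guaranteed (or the resulting \(G=K\) is handled separately).
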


\begin{proof}
    Let \(G\), \(K\), \(G'\), and \(u\) be as in the statement.
Let \(\D'\) be a path decomposition of~\(G'\).
    Let \(P\) be an element of \(\D'\) that contains \(u\)
    and let \(H = K\cup P\).
    By Lemma~\ref{lemma:ESET-absorbing}, we have \(\pn(H)\leq\big\lceil |V(K)|/2\big\rceil\).
    Let \(\D_H\) be a minimum path decomposition of \(H\),
    and note that \(\D_H\cup(\D'\setminus\{P\})\) 
    is a path decomposition of \(G\) 
    such that \(|\D_H\cup(\D'\setminus\{P\})|\leq\big\lceil |V(K)|/2\big\rceil + \pn(G')-1\),
    as desired.
\end{proof}

\section{Graphs with maximum E-degree at most $3$}\label{sec:main-result}

The strategy of the proof of the main theorem of this section is to show that the even subgraph of a minimal counterexample consists of vertex-disjoint triangles, and then proving that these graphs satisfy Gallai's Conjecture.

\begin{theorem}\label{thm:main-theorem-1}
If \(G\) is a connected simple graph on \(n\) vertices 
such that \(\Delta(\EV{G})\leq 3\), 
then either \(G\) is a Gallai graph 
or \(G\) is a SET graph.
\end{theorem}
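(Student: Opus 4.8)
The plan is to argue by contradiction: let $G$ be a minimal counterexample, i.e.\ a connected graph with $\Delta(\EV{G})\leq 3$ that is neither a Gallai graph nor a SET graph, and such that $|E(G)|$ is minimum among all such counterexamples (minimizing vertices first, then edges). First I would record the basic consequences of minimality via the absorbing lemmas of Section~\ref{sec:lemmas}: since Lemma~\ref{lemma:absorbing-all} lets a hanging ESET subgraph be absorbed into $G'$ with the right bound, $G$ cannot contain a hanging ESET subgraph (one checks $\lceil|V(K)|/2\rceil+\pn(G')-1\leq\lceil |V(G)|/2\rceil$ using $|V(G')|\leq |V(G)|-|V(K)|+1$ and that $G'$ is a Gallai graph by minimality). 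Similarly, via Lemma~\ref{lemma:fan2} a minimal counterexample can contain no ``addible'' edge whose removal leaves a Gallai graph; concretely, I expect a claim of the form: for every edge $uv$ of $G$, removing $uv$ either creates a SET-graph component or removing it does not decrease the path number in a way that helps — this pins down the local structure around odd vertices.

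Next I would determine the structure of $\EV{G}$. Because $\Delta(\EV{G})\leq 3$, each block of $\EV{G}$ has maximum degree at most $3$. Using Fan's idea mentioned in the introduction (a leaf block of a minimal such E-subgraph must be a triangle) together with Pyber's theorem (Theorem~\ref{thms:seminal}\eqref{thm:pyber}): if $\EV{G}$ were a forest, $G$ would already be a Gallai graph, so $\EV{G}$ contains a cycle, hence a nontrivial block, hence a leaf block which must be a triangle. The heart of this part is to show that every component of $\EV{G}$ is a single triangle (and there are no isolated even vertices, or exactly the configuration allowed by the SET definition). This is where Lemmas~\ref{lemma:induced-matching}, \ref{lemma:fan4}, and~\ref{lemma:extra-edges} come in: by removing a carefully chosen set $B$ of edges at an odd vertex $u$, applying Lemma~\ref{lemma:fan4} to get a large addible set towards $u$, and then using Lemma~\ref{lemma:extra-edges} to show the leftover edges can be added back as single-edge paths without overshooting $\lceil n/2\rceil$, one forces the even neighbors of any odd vertex to lie in a common triangle of $\EV{G}$; combined with the leaf-block argument this collapses $\EV{G}$ to vertex-disjoint triangles plus possibly isolated even vertices, and the induced-matching lemma rules out the isolated vertices.

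With $\EV{G}$ known to be a disjoint union of triangles, I would then show $\EV{G}$ is a \emph{single} triangle: if there were two triangle-components $T_1,T_2$, pick an odd vertex $u$ adjacent to $T_1$ (exists since $G$ is connected and $T_1\neq G$), apply Fan's strategy at $u$ to absorb $T_1$ into a path through $u$, obtaining an ESET subgraph hanging off the rest — contradicting that $G$ has no hanging ESET subgraph. Finally, once $\EV{G}$ is a single triangle, I would verify that either every odd vertex has at least two even neighbors, in which case $G$ is by definition a SET graph (contradiction), or some odd vertex has at most one even neighbor, in which case Lemma~\ref{lemma:fan2}/Lemma~\ref{lemma:induced-matching} (passing-neighbor count) lets us add that edge and reduce to Theorem~\ref{thms:seminal}\eqref{thm:lovasz} or~\eqref{thm:pyber} on the smaller graph, giving $\pn(G)\leq\lfloor n/2\rfloor$ and contradicting that $G$ is not Gallai.

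I expect the main obstacle to be the middle step: proving that the even neighbors of each odd vertex are confined to a single triangle of $\EV{G}$, and more generally that $\EV{G}$ has no larger $2$-connected blocks. Controlling the bookkeeping — how many paths end at $u$ after a sequence of addible transformations towards and then outwards $u$, versus how many removed edges still need re-inserting — is delicate; this is exactly what Lemma~\ref{lemma:extra-edges} is designed to handle, so the real work is choosing the right edge set $B$ at $u$ and checking that the passing-neighbor hypotheses of Lemmas~\ref{lemma:fan2} and~\ref{lemma:induced-matching} hold at each stage. The absorbing lemmas then do the rest cleanly, so the argument should close once that structural claim is in place.
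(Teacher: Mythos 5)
Your overall frame (minimal counterexample, Fan's addible-edge technique, reduction of \(\EV{G}\) to disjoint triangles, ESET/absorbing lemmas) matches the paper, but two steps in your plan would fail as sketched. First, your blanket claim that a minimal counterexample ``cannot contain a hanging ESET subgraph'' is not what minimality gives you. If the connection vertex \(u\) is odd in \(G\) but even in \(G'\), then \(u\) becomes an even vertex of \(G'\) that was not in \(\EV{G}\), so \(\EV{G'}\not\subseteq\EV{G}\) and you cannot bound \(\Delta(\EV{G'})\) by \(3\); hence the minimality hypothesis does not apply to \(G'\) and your computation with Lemma~\ref{lemma:absorbing-all} never gets off the ground. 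The paper's Claim~\ref{claim:no-hanging-ESET} proves only the restricted statement (the connection vertex must be odd in \(G\) and even in \(G'\)), and even in the remaining cases it must treat separately the possibility that \(G'\) is a SET graph rather than a Gallai graph, gluing two decompositions from Lemma~\ref{lemma:ESET-special-decomposition}; you assume \(G'\) is Gallai. This restricted claim, fed through the notion of a Fan subgraph (Claim~\ref{claim:no-fan-subgraph}), is exactly what guarantees that each edge-deletion step in Fan's technique leaves a Gallai graph rather than a graph with SET components; your sketch gestures at this issue but supplies no mechanism.

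Second, your endgame is not viable. You propose to show \(\EV{G}\) is a \emph{single} triangle by ``absorbing \(T_1\) into a path through \(u\), obtaining an ESET subgraph hanging off the rest,'' but a hanging ESET subgraph requires a cut vertex separating it from the remainder of \(G\), and the vertices of \(T_1\) (and of the odd vertices attached to it) will in general have many other neighbors, so no such separation exists; nothing forces \(\EV{G}\) to be a single triangle in a minimal counterexample. The paper never proves this: instead it keeps all triangles \(T_1,\dots,T_s\) and, for \(s\ge 2\), develops Claims~\ref{claim:one-triangle-neighbor}--\ref{claim:full-vertices3} (each odd vertex sees at most one triangle, full vertices, every odd vertex has a triangle neighbor) and then runs a shortest-path argument between two triangle components with a carefully ordered sequence of transformations via Lemmas~\ref{lemma:fan2}, \ref{lemma:fan4}, and~\ref{lemma:extra-edges} to reach \(\pn(G)\le\floor{n/2}\). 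Relatedly, in your single-triangle case you only treat an odd vertex with exactly one even neighbor; an odd vertex with \emph{no} even neighbor (which is what prevents \(G\) from being a SET graph) has no edge to ``add back,'' and ruling it out is precisely the content of Claims~\ref{claim:full-vertices1}, \ref{claim:full-vertices2}, and~\ref{claim:no-free-odd-vertex}, which your proposal omits.
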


\begin{proof}Suppose that the statement does not hold, 
	and let \(G\) be a counterexample 
	on \(n\) vertices with \(\Delta(\EV{G})\leq 3\) 
	and which minimizes \(|E(G)|\).
	In what follows, we prove a few claims regarding \(G\).
First, we prove that every hanging ESET subgraph of \(G\) must be connected at a special vertex.
	
	\begin{claim}\label{claim:no-hanging-ESET}
	    Let \(K\) be a hanging ESET subgraph of \(G\),
	    and let \(G'\) be such that \(G = K\cup G'\), and \(V(K)\cap V(G')=\{u\}\).
	    Then \(u\) is an odd vertex of \(G\) and an even vertex of \(G'\).
\end{claim}
	
	\begin{proof}
	    Let \(K\), \(G'\), and \(u\) be as in the statement.
	    Suppose, for a contradiction, that \(u\) has even degree in \(G\)
	    or odd degree in \(G'\).
Then \(\EV{G'}\subseteq\EV{G}\),
	    and hence \(\Delta(\EV{G'})\leq 3\).
	    By the minimality of \(G\), the graph \(G'\) is either a Gallai graph or a SET graph.
	    First, suppose that \(G'\) is a Gallai graph, 
	    i.e., \(\pn(G')\leq\lfloor|V(G')|/2\rfloor\).
	    By Lemma~\ref{lemma:absorbing-all}, it follows that
	    \begin{align*}
	        \pn(G) &\leq \big\lceil|V(K)|/2\big\rceil + \big\lfloor|V(G')|/2\big\rfloor-1\\
	                &\leq \big(|V(K)|+1\big)/2 + |V(G')|/2-1\\
	                &=\big(|V(K)|+|V(G')|-1\big)/2\\
	                &=|V(G)|/2.
	    \end{align*}
        Therefore,  \(\pn(G)\leq \big\lfloor |V(G)|/2\big\rfloor\),
        and \(G\) is a Gallai graph,
        a contradiction.
	    
	    Thus, we may assume that \(G'\) is a SET graph.
	    By Lemma~\ref{lemma:ESET-special-decomposition},
	    \(K\) (resp.  \(G'\)) admits a path decomposition \(\D_K\) (resp.\ \(\D'\)) 
	    such that \(\D_K(u)\geq 2\) and \(|\D_K|\leq\big\lceil|V(K)|/2\big\rceil\) (resp.\ \(\D'(u)\geq 2\)
	    and \(|\D'|\leq \big\lceil |V(G')|/2\big\rceil\)).
	    Let \(P_1\) and \(P_2\) be paths in \(\D_K\) having \(u\) as end vertices,
	    and \(Q_1\) and \(Q_2\) be paths in \(\D'\) having \(u\) as end vertices.
	    Put \(R_1 = P_1\cup Q_1\) and \(R_2=P_2\cup Q_2\),
	    and note that \(\D = \big(\D_K\setminus\{P_1,P_2\}\big)\cup\big(\D'\setminus\{Q_1,Q_2\}\big)\cup\{R_1,R_2\}\)
	    is a path decomposition of \(G\) with cardinality
	    \begin{align*}
	        |\D|	&\leq	\big\lceil|V(K)|/2\big\rceil	+ \big\lceil |V(G')|/2\big\rceil  -2 \\
	            	&\leq	\big(|V(K)|+1\big)/2			+ \big(|V(G')|+1\big)/2-2\\
	            	&\leq	\big(|V(K)|+|V(G')|-2\big)/2 \\
	            	&<		\big(|V(K)|+|V(G')|-1\big)/2 \\
                	&=		|V(G)|/2.
	    \end{align*}
        Therefore,  \(\pn(G)\leq \big\lfloor |V(G)|/2\big\rfloor\),
        and \(G\) is a Gallai graph,
        a contradiction.
	\end{proof}

    Now we use Fan's techniques to prove that \(\EV{G}\) consists of vertex-disjoint triangles.
    First, we prove that no vertex of \(G\) has a unique even neighbor.

    \begin{claim}\label{claim:no-unique-even-neighbor}
        No vertex of \(G\) has exactly one even neighbor.
    \end{claim}

    \begin{proof}
        Suppose, for a contradiction, that \(G\) contains a vertex \(u\) that has precisely one even neighbor, say \(v\), and let \(G'=G \setminus uv\).
        Note that \(v\) has odd degree in \(G'\) and \(u\) has no even neighbor in \(G'\).
        Therefore, \(\Delta(\EV{G'}) \leq 3\).
We claim that \(G'\) is a Gallai graph.
        By Remark~\ref{remark:gallai-components}, it is enough to prove that no component of \(G'\) is a SET graph.
        Indeed, \(G'\) has at most two components, say \(G'_u\) and \(G'_v\), 
        that contain, respectively, \(u\) and \(v\).
        If \(u\) is an odd vertex of \(G'_u\) and \(G'_u\) is a SET graph,
		then \(u\) must be adjacent to at least two even vertices of \(G'_u\), say \(x,y\).
		But, in this case, \(x\) and \(y\) have even degree in \(G\),
		and hence \(u\) has at least three even neighbors in \(G\), namely \(v\), \(x\), and \(y\),
		a contradiction.
        Thus, we may assume that \(u\) is an even vertex of \(G'_u\)
        and an odd vertex of \(G\),
        and hence \(u\) is an isolated even vertex of \(G'\).
        Thus, \(\EV{G'_u}\) is not a triangle,
        and hence \(G'_u\) is not a SET graph.
        Thus, if \(G'\) is connected, i.e., \(G'_u=G'_v\),
        then \(G'\) is a Gallai graph, as desired.
        Thus, we may assume \(G'_u\neq G'_v\).
        In this case, note that, if \(G'_v\) is a SET graph,
        then \(G'_v\) is a hanging ESET subgraph of \(G\) connected at \(v\),
        which is an even vertex of \(G\), a contradiction to Claim~\ref{claim:no-hanging-ESET}.
        Thus, \(G'_u\) and \(G'_v\) are Gallai graphs as desired.

        Let \(\sD'\) be a minimum path decomposition of \(G'\).
        Since \(v\) has odd degree in \(G'\), it follows \(\D'(v) \geq 1\), and since \(u\) has no even neighbor in \(G'\), we have \(|\{x\in N_{G'}(u)\colon \D'(x)=0\}|=0\).
        Thus, by Lemma~\ref{lemma:fan2}, \(uv\) is addible towards \(u\) with respect to \(\D'\).
        Thus, \(\pn(G)\leq |\D'| = \pn(G')\), and hence \(G\) is a Gallai graph, a contradiction.
    \end{proof}
    
\begin{definition}\label{def:fan}
Let \(G\) be a graph, and let \(F\) be a non-empty subgraph of \(G\) with components \(F_1, F_2, \ldots, F_\ell\).
We say that \(F\) is a \emph{Fan subgraph} if, 
for \(j =  2, \ldots, \ell\), the graph \(F_j\) consists of a single edge 
joining even vertices of \(G\),
and \(F_1\) is either the null graph (graph with an empty set of vertices)
or the following hold.

\begin{enumerate}[(i)]
    \item\label{def:fan2} \(F_1\) is a star with center at a vertex \(u\) and at least two leaves \(v_1, v_2, \ldots, v_k\) and, for  \(i \geq 2\), the vertex \(v_i\)  is even in~\(G\);
\item\label{def:fan5} {\(u\) has no even neighbor in \(G \deleset E(F)\);} and
    \item\label{def:fan6} if \(v_1\) is odd in \(G\), then \(u\) is odd in \(G\)
    and each component of \(\EV{G}\) is a triangle.
\end{enumerate}
\end{definition}

\begin{claim}\label{claim:no-fan-subgraph}
  Let \(F\) be a Fan subgraph of \(G\) and let \(G' = G \deleset E(F)\).
  If \(\Delta(\EV{G'}) \leq 3\), 
then \(G'\) is a Gallai graph.
\end{claim}
\begin{proof}
Let \(F\) and \(G'\) be as in the statement.
By Remark~\ref{remark:gallai-components}, it is enough to prove that no component of \(G'\) is a SET graph.
Thus, let \(H'\) be a component of \(G'\),
and suppose, for a contradiction, that \(H'\) is a SET graph.
Let \(E = \{x_1, x_2, x_3\}\) 
and \(O = V(H') \setminus E = \{y_1, \ldots, y_t\}\) 
be, respectively, the set of even and odd vertices of \(H'\).
Since \(G\) is connected, it follows that \(V(H') \cap V(F) \neq \emptyset\).

  Let \(F_1, F_2, \ldots, F_\ell\) be the components of \(F\) as in Definition~\ref{def:fan}
  where \(F_1\) is either the null graph or a star.
First, suppose that \(F_1\) is the null graph.
  Since, by Definition~\ref{def:fan}, for \(j =2,\ldots,\ell\), 
  \(F_j\) consists of a single edge joining even vertices of \(G\),
  every vertex of \(F\) has odd degree in \(G'\),
  which implies that \(V(F)\cap V(H')\subseteq O\).
  Thus, \(x_1,x_2,x_3\) have even degree in \(G\).
  Suppose that there are two vertices, say \(y\) and \(y'\), in \(V(F)\cap V(H')\).
  By the definition of SET graph, \(y\) and \(y'\) are each adjacent to at least two vertices in \(E\).
  Therefore, \(y\) and \(y'\) have a common neighbor, say \(x_1\), in \(E\).
  But then \(x_2,x_3,y,y'\) are four even neighbors of \(x_1\) in \(G\), 
  a contradiction to \(\Delta(\EV{G})\leq 3\).
  Thus, we may assume that there is precisely one vertex, say \(y\), in \(V(F)\cap V(H')\).
  In this case, \(H'\) is a hanging ESET (of type~\ref{def:eset-1}) 
  connected at \(y\), which is an even vertex of \(G\),
  a contradiction to Claim~\ref{claim:no-hanging-ESET}.
  
  Thus, we may assume that  \(F_1\) is a star, and let \(u\) be its center and \(v_1, v_2, \ldots, v_k\) be its leaves.
  By Definition~\ref{def:fan}\eqref{def:fan2}, we may assume that \(v_i\) is even in \(G\) for \(i \geq 2\).
Also, by Definition~\ref{def:fan}, we have \(k \geq 2\) and, for \(j \geq 2\), 
  \(F_j\) consists of a single edge joining even vertices of \(G\).
  This implies that the only vertices of \(F\) that may have even degree in \(G'\) are \(u\) and \(v_1\),
  and hence \(V(F) \cap E \subseteq \{u, v_1\}\).
  {We may assume that \(u\notin O\), otherwise, by the definition of SET graph, \(u\) would have at least two even neighbors in \(G'\), 
  which contradicts Definition~\ref{def:fan}\eqref{def:fan5}.}
  In what follows, we divide the proof into three cases, 
  depending on whether \(\{u,v_1\}\cap E = \emptyset\), \(u\in E\), or \(v_1\in E\).
  Since \(uv_1 \in E(F)\), we have \(uv_1\notin E(H')\),
  and hence the conditions \(u\in E\) and \(v_1\in E\) are disjoint
  because \(\EV{H'}\) is a triangle.
  Let \(L\) be the set of leaves of \(F\), i.e., \(L = V(F) \setminus
  \{u\}\), and let \(L_E = \{v \in L \colon v \text{ has even degree in } G\}\).
Moreover, note that \begin{equation}\label{eq:degree-leaves}
      d_{G'}(w) = d_{G}(w) - 1 \text{ for every vertex } w \in L,
  \end{equation}
  and that every vertex in \(L_E\) has odd degree in \(G'\).

  Suppose that  \(\{u, v_1\} \cap E = \emptyset\), and hence every vertex in \(E\) has even degree in \(G\).
Analogously to the case \(F_1\) is the null graph, if there are two distinct vertices in \(L\cap O\),
then there is a vertex, say \(x_1\), in \(E\) with E-degree at least \(4\) in \(G\),
a contradiction to \(\Delta(\EV{G})\leq 3\).
Thus we may assume that \(|L \cap O| \leq 1\).
 {Thus \(V(H') \cap V(F) = \{y_i\} \subset O\), for some \(i = 1, \ldots, t\), and hence \(H'\) is a hanging ESET subgraph (of type~\ref{def:eset-1}) 
  connected at \(y_i\), which is an even vertex of~\(G\), 
  a contradiction do Claim~\ref{claim:no-hanging-ESET}.}
Hence, we may assume that \(\{u, v_1\} \cap E \neq \emptyset\).

  Suppose that \(u \in E\), and suppose, without loss of generality, that \(u = x_1\).
  Since \(v_1 \notin E\), \(x_2\) and \(x_3\) have even degree in \(G\), a contradiction to Definition~\ref{def:fan}(\ref{def:fan5}).
  Thus, we may assume that \(u\notin E\).

  Now, suppose that \(v_1 \in E\) and suppose, without loss of generality, that \(v_1 = x_1\).
  As noted above, \(u\notin E\), and hence \(x_2\) and \(x_3\) have even degree in \(G\).
Moreover, \(v_1\) has odd degree in \(G\), and hence by Definition~\ref{def:fan}(\ref{def:fan6}), \(u\) is odd in \(G\) and each component of \(EV(G)\) is a triangle.
If \(L \cap O = \emptyset\), then \(H'\) is a hanging ESET subgraph (of type~\ref{def:eset-1})
  connected at \(v_1\), which is an odd vertex of \(G\).
  Let \(G''\) be such that \(G=H'\cup G''\) and \(V(H')\cap V(G'')=\{v_1\}\).
  Since \(v_1\in E\), the vertex \(v_1\) has odd degree in \(G''\),
  a contradiction do Claim~\ref{claim:no-hanging-ESET}.
  Thus, we may assume that there exists a vertex \(y \in L \cap O\), and hence by~\eqref{eq:degree-leaves}, the vertex \(y\) have even degree in \(G\).
  Since every vertex in \(O\) has at least two neighbors in \(E\), we may assume, without loss of generality, that \(yx_2 \in E(G)\).
  Therefore, \(yx_3 \in E(G)\) because \(x_2, x_3, y\) are even in \(G\), \(\{x_2x_3, yx_2\} \subseteq E(G)\), and each component of \(\EV{G}\) is a triangle.
  In particular, \(x_2\) and \(x_3\) are the only even neighbors of \(y\) in \(G\).
  Note that every vertex in \(O\) is adjacent to \(x_2\) or \(x_3\). 
  Thus, if there is a vertex in \(y' \in L \cap O\) such that \(y' \neq y\), then \(y'\) would have even degree in \(G\) and \(x_2, x_3, y, y'\) would belong to the same component of \(\EV{G}\), a contradiction.
  Therefore \(V(H') \cap V(F) = \{x_1, y\}\).
Suppose that \(y\in V(F_j)\), for some \(j \geq 2\), and let \(V(F_j) = \{y,y'\}\).
Then \(y'\) is an even neighbor of \(y\) in \(G\), and hence \(y' \in\{x_2,x_3\}\subseteq E\), a contradiction.
Thus, we have \(y = v_i\), for some \(i \geq 2\), and  \(uy \in E(F_1)\).
  Note that \(\{ux_1, uy\}\) is an edge cut of \(G\), where \(H'\) is a component of \(G-x_1v-yv\).
  In other words, \(H''=H'+ux_1+uy\) is a hanging ESET subgraph (of type~\ref{def:eset-3})
  connected at \(u\), which is an odd vertex of \(G\).
  Let \(G''\) be such that \(G=H''\cup G''\) and \(V(H'')\cap V(G'')=\{u\}\).
  Since \(d_{H''}(u)=2\), the vertex \(u\) has odd degree in \(G''\),
  a contradiction do Claim~\ref{claim:no-hanging-ESET}.
\end{proof}

    \begin{claim}\label{claim:only-triangles}
        Every component of \(\EV{G}\) is a triangle or an isolated vertex.
    \end{claim}

    \begin{proof}
        Suppose, for a contradiction, that \(\EV{G}\) contains a component \(H\) which is neither a triangle nor an isolated vertex.

        \begin{subclaim}
          Every vertex in \(H\) has degree~\(2\).
        \end{subclaim}

        \begin{proof}
            By Claim~\ref{claim:no-unique-even-neighbor}, no vertex of \(H\) has degree \(1\).
            Thus, we prove that \(H\) has maximum degree \(2\).
            Suppose, for a contradiction, that \(H\) has a vertex \(u\) with degree~\(3\), and let \(\{v_1, v_2, v_3\} \subseteq N_H(u)\) be three even neighbors of \(u\).
            Let \(F\) be the subgraph of \(G\) induced by the edges \(uv_1, uv_2, uv_3\),
            and let \(G_1 = G \setminus E(F)\).
            Note that every vertex in \(V(F)\) has odd degree in \(G_1\) and, hence, that \(\EV{G_1} \subset \EV{G}\).
            Thus, it follows that \(\Delta(\EV{G_1}) \leq 3\).
            Note that \(F\) is a star with center at \(u\) and with at least two leaves,
            in which every leaf has even degree in \(G\).
            Moreover, \(u\) has no odd neighbor in \(G_1\).
            Thus, \(F\) is a Fan subgraph.
            By Claim~\ref{claim:no-fan-subgraph}, we have \(\pn(G_1)\leq \floor{n / 2}\).
Let \(\D_1\) be a minimum path decomposition of \(G_1\).
            Since the vertices of \(F\) have odd degree in \(G_1\), it follows that \(\sD_1(v) \geq 1\) for every \(v \in V(F)\).
            By Lemma~\ref{lemma:fan4}, there is a set \(B\subseteq\{uv_1,uv_2,uv_3\}\) addible towards \(u\) with respect to \(\D_1\) and containing \textcolor{black}{at least} two edges.
            Let \(\D_2\) be the \(B\)-transformation of \(\D_1\) towards \(u\),
            and let \(G_2 = G_1+B\).
            {\color{black}        
            If \(|B|=3\), then \(\D_2\) is a path decomposition of \(G\) such that \(|\D_2| = |\D_1| \leq\floor{n/2}\), a contradiction.
            Thus, we may assume \(|B|=2\).
            Suppose, without loss of generality that \(uv_3\notin B\).
Note that \(\D_2(u)\geq 3\), and that, since \(d_{\EV{G}}(v_3) \leq 3\), we have \(d_{\EV{G_2}}(v_3)\leq 2\), because \(u\) is not a neighbor of \(v_3\) in \(G_1\).
            Thus, by Lemma~\ref{lemma:fan2}, \(uv_3\) is addible towards \(v_3\) 
            with respect to \(\D_2\),} but the \(uv_3\)-transformation \(\D\) of \(\D_2\) towards \(v_3\)
            is a path decomposition of \(G\) such that \(|\D_3|\leq\floor{n/2}\),
            a contradiction.
\end{proof}
        
        Suppose that \(H\) is not a triangle, and let \(u\) be a vertex in \(H\).
        Let \(v_1\) and \(v_2\) be the even neighbors of \(u\).
        Since \(H\) is not a triangle, \(v_1\) and \(v_2\) are not adjacent.
        Let \(w\) be an even neighbor of \(v_2\) different from \(u\).  
        Let \(F_1\) be the null graph, \(F_2 = G[\{u, v_1\}]\) and \(F_3=G[\{v_2, w\}]\),
        and let \(F = F_1\cup F_2\cup F_3\).
        Let \(G' = G \setminus E(F)\), and
        note that every vertex in \(V(F)\) has odd degree in \(G'\), 
        and hence \(\EV{G'}\subseteq \EV{G}\).
        Thus \(\Delta(\EV{G'}) \leq 3\).
        Since \(F_2\) and \(F_3\) consist of single edges joining even vertices of \(G\), \(F\) is a Fan subgraph.
        By Claim~\ref{claim:no-fan-subgraph}, we have \(\pn(G') \leq \floor{n / 2}\). 
Let \(\D'\) be a minimum path decomposition of \(G'\).
        Note that \(v_2\) has no even neighbors in \(G'\).
        Thus, by Lemma~\ref{lemma:fan2}, \(v_2w\) is addible towards \(v_2\) with respect to \(\D'\).
        Let \(\D''\) be the \(v_2w\)-transformation of \(\D'\) towards \(v_2\).
        Note that \(v_2\) is the only even neighbor of \(u\) in \(G'+v_2w\),
        but \(\D''(v_2)\geq 2\).
        Thus, by Lemma~\ref{lemma:fan2}, \(uv_1\) is addible towards \(u\) with respect to \(\D''\),
        but the \(uv_1\)-transformation \(\D\) of \(\D''\) towards \(u\) 
        is a path decomposition of \(G\) such that \(|\D|\leq\floor{n/2}\),
        a contradiction.
\end{proof}

    In what follows, if \(x\) is a vertex of odd degree, and \(T\subseteq \EV{G}\) is a triangle containing a neighbor of \(x\), we say that \(T\) is a \emph{triangle neighbor} of \(x\).
Given a vertex $v$ and a triangle \(T\), we denote by \(E_G(u, T)\) the set of edges in \(G\) joining \(v\) to a vertex in \(T\).
    First, we prove that each odd vertex of \(G\) has even neighbors in at most one component of \(\EV{G}\), which implies that 
    every odd vertex of \(G\) has at most one triangle neighbor.
    This implies that two vertices of even degree have a common neighbor if and only if they belong to the same (triangle) component of \(\EV{G}\).
    The proof of the next claim consists in applying Fan's technique on vertices of odd degree.
    For that, let \(T_1,\ldots, T_s\) be the triangles in \(\EV{G}\).
        
    \begin{claim}\label{claim:one-triangle-neighbor}
        If \(d_G(u)\) is odd, then \(u\) has even neighbors in at most one component of \(\EV{G}\).
    \end{claim} 

    \begin{proof}
        Suppose, for a contradiction, that \(u\) has even neighbors in at least two components of \(\EV{G}\).
        Let \(\mathcal{I}\) be the set of even neighbors of \(u\) which are isolated vertices in \(\EV{G}\).
        For \(k=1,2,3\), let \(\mathcal{T}_k\) be indexes of the triangle neighbors of \(u\) containing \(k\) neighbors of \(u\) and let \(\mathcal{T} =\mathcal{T}_1\cup\mathcal{T}_2\cup\mathcal{T}_3\).
For each \(k\in\{1,2,3\}\) and \(j\in\mathcal{T}_k\), let \(V(T_j) = \{x^j_1,x^j_2, x^j_3\}\), where \(x^j_i \in N_G(u)\) for \(i\leq k\).
        In what follows, we remove a set of edges incident to \(u\) and of some triangles neighbors of \(u\).
        For that, for  \(v\in\mathcal{I}\), let \(\varphi(v) = \{uv\}\); for \(j\in\mathcal{T}_1\cup\mathcal{T}_2\), let \(\varphi(T_j) = \{ux^j_{1},x^j_{2}x^j_{3}\}\); and for \(j\in\mathcal{T}_3\), let \(\varphi(T_j) = \{ux^j_{1},ux^j_{2},ux^j_{3}\}\).
        Let \(B_0 =\big(\bigcup_{v\in\mathcal{I}}\varphi(v)\big)  \cup\big(\bigcup_{j\in\mathcal{T}}\varphi(T_j)\big)\) (see Figure~\ref{fig:one-triangle-neighbor-case1}) 
and let \(B_0^u = E_G(u) \cap B_0\).

        \begin{subclaim}\label{subclaim:one-triangle-neighbor}
            \(\mathcal{T}_2\neq\emptyset\) and \(|B^u_0|\) is odd.
        \end{subclaim}
        
        \begin{proof}
            Suppose, for a contradiction, that \(\mathcal{T}_2=\emptyset\) or \(|B^u_0|\) is even.
            Let \(G_0 = G \setminus B_0\). 
            Note that every vertex of \(B_0 \delv u\) has odd degree in \(G_0\),
            and hence if \(d_{G_0}(u)\) is even, then \(u\) is an isolated vertex in \(\EV{G_0}\).
            Thus, \(\Delta(\EV{G_0}) \leq 3\).
Let \(H_1, H_2,\ldots, H_\ell\) be the components of \(B_0\),
            where \(H_1\) is the subgraph of \(G\) edge-induced by \(B^u_0\).
            By definition of \(B_0\), the graphs \(H_2, \ldots, H_\ell\)
            consist of single edges joining even vertices.
            Moreover, \(H_1\) is a star with center at \(u\) with at least two leaves,  since \(u\) has even neighbors in at least two components of \(\EV{G}\).
            In addition, the leaves of \(H_1\) are even vertices of \(G\),
            and every neighbor of \(u\) in \(G_0\) is odd.
            Thus, \(B_0\) is a Fan subgraph of \(G\), and by Claim~\ref{claim:no-fan-subgraph}, \(\pn(G_0) \leq \floor{n / 2}\).
            Let \(\cD_0\) be a minimum path decomposition of \(G_0\).
 
            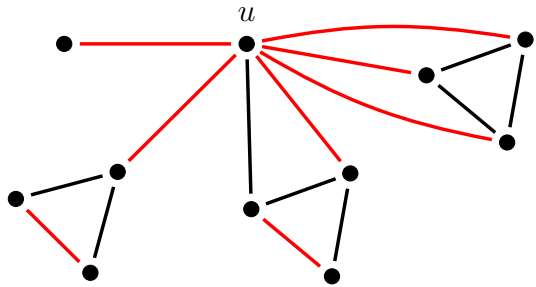
\begin{figure}[h]
                \centering
                \begin{tikzpicture}[scale = 0.8]
	\node (v) [black vertex] 	at (0,0) {};
	
\node (cu) []			at (180:4) {};
	\node (u) [black vertex] 	at ($(cu)+(0:1)$) {};
	\draw[edge,color1] (u) -- (v);
	
\node (c1)  []			at (225:4) {};
	\node (x11) [black vertex] 	at ($(c1)+(45:1)$) {};
	\node (x12) [black vertex] 	at ($(c1)+(165:1)$) {};
	\node (x13) [black vertex] 	at ($(c1)+(285:1)$) {};
	\draw[edge,color1]	(x11) -- (v) (x12) -- (x13);
	\draw[edge]		(x12) -- (x11) -- (x13);
	
\node (c2)  []			at ($(240+50:3.1)$) {};
	\node (x21) [black vertex] 	at ($(c2)+(50:1)$) {};
	\node (x22) [black vertex] 	at ($(c2)+(120+50:1)$) {};
	\node (x23) [black vertex] 	at ($(c2)+(240+50:1)$) {};
	\draw[edge,color1]	(x21) -- (v) (x22) -- (x23);
	\draw[edge]		(v) -- (x22) -- (x21) -- (x23);

\node (c3)  []			at (350:4) {};
	\node (x31) [black vertex] 	at ($(c3)+(120+50:1)$) {};
	\node (x32) [black vertex] 	at ($(c3)+(240+50:1)$) {};
	\node (x33) [black vertex] 	at ($(c3)+(50:1)$) {};
	\draw[edge,color1]	(x31) -- (v) (x32) to [bend left=10] (v) (x33) to [bend right=10] (v);
	\draw[edge]		(x32) -- (x31) -- (x33) -- (x32);

    \node (label_u) at ($(v)+(90:0.5)$) {$u$};

\end{tikzpicture}

                 \caption{Illustration of the Fan subgraph \(B_0\) in the proof of Subclaim~\ref{subclaim:one-triangle-neighbor}.
                         The edges of \(B_0\) are highlighted in red.}\label{fig:one-triangle-neighbor-case1}
            \end{figure}            

            Let \(A_0 =\{x^j_2x^j_3 \colon j \in \mathcal{T}_1 \cup \mathcal{T}_2\}\) and let \(G_1 = G_0 \addeset A_0\).
            Since every vertex in \(B_0 - u \supset V(A_0)\) has odd degree in \(G_0\), it follows that \(\cD_0(v) \geq 1\) for every \(v \in V(B_0) \setminus \{u\})\).
            Note that if \(j \in \mathcal{T}_1\), then \(x^j_2\) and \(x^j_3\) are not adjacent to \(u\) (that may possibly have even degree in \(G_0\)), and hence \(x^j_2\) and \(x^j_3\) have no passing neighbor in \(\cD_0\).
            Moreover, if \(j \in \mathcal{T}_2\), then \(\mathcal{T}_2 \neq \emptyset\) and, by hypothesis, \(|B^u_0|\) is even; hence \(u\) has odd degree in \(G_0\).
            Thus, \(x^j_2\) and \(x^j_3\) have no passing neighbors in \(\cD_0\), for \(j\in\mathcal{T}_1\cup\mathcal{T}_2\).
            Since \(A_0\) is an induced matching in \(G_1\), it follows by
            Lemma~\ref{lemma:induced-matching} that  there is a path decomposition \(\cD_1\) of \(G_1\)
            such that \(|\cD_1| = |\cD_0|\); 
            for every \(j \in \mathcal{T}_1 \cup \mathcal{T}_2\), we have \(\cD_1(x^j_2) = \cD_0(x^j_2) + 1\) and 
            \(\cD_1(x^j_3) = \D_0(x^j_3) - 1\);
            and \(\cD_1(v) = \cD_0(v)\) for every \(v \notin V(A_0)\).

            For \(v \in \big(V(B_0)\setminus \{u\}\big) \setminus V(A_0)\), it follows that \(\cD_1(v) = \cD_0(v) \geq 1\).
            Moreover, for \(j \in\mathcal{T}_1\cup\mathcal{T}_2\), we have \(\cD_1(x^j_2) \geq 2\) and \(x^j_3\) is not a neighbor of \(u\) in \(G\).
            Thus no neighbor of \(u\) in \(G\) is a passing vertex in \(\cD_1\).
            Let \(B_1 = E(G) \setminus E(G_1)\), and note that \(B_1 = B^u_0\).
            By Lemma~\ref{lemma:fan4}, there is an addible set \(A_1 \subseteq B_1\) towards \(u\) with respect to \(\cD_1\) such that \(|A_1| \geq \lceil |B_1|/2 \rceil\).
            Let \(G_2 = G_1 \addeset A_1\),  \(\cD_2\) be the \(A_1\)-transformation of \(\cD_1\) towards \(u\), and \(B_2 = E(G) \setminus E(G_2)\).
            Note that 
            \begin{equation}\label{eq:d2}
            \cD_2(u) \geq \cD_1(u) + \ceil{|B_1| / 2|}.
            \end{equation}

            Let \(B^*_2 = \big\{uv_j \in B_2 \cap E_G(u, T_j) \: j \in \cT_3 \tand |E_{G_2}(u, T_j)| = 2\big\}\).
            Note that \(\cD_2(u) \geq 2 |B^*_2|\).
            Let \(A_2 \subseteq B^*_2\) be a maximal addible set outwards \(u\) with respect to \(\cD_2\), \(G_3 =  G_2 \addeset A_2\), \(\cD_3\) be the \(A_2\)-transformation of \(\cD_2\) outwards \(u\), and \(B_3 = E(G) \setminus E(G_3)\).
            We show that \(A_2 = B^*_2\).
            Suppose, for a contradiction, that \(A_2 \subsetneq B^*_2\) (hence \(B^*_2 \setminus A_2 \neq \emptyset\) and \(B^*_2 \neq \emptyset\)).
            Since \(B^*_2 \neq \emptyset\), it follows that \(\cT_3 \neq \emptyset\), and since \(u\) has neighbors in at least two components of \(EV(G)\) in \(G\), it follows that \(|B_1| = |B^u_0| \geq 4\).
            We claim that \(\D_2(u)\geq 3\).
            This is clear from Equation~\eqref{eq:d2} if \(|B_1|\) is odd.
            If \(|B_1|\) is even, then \(u\) has odd degree in \(G_1\), and hence \(\cD_1(u) \geq 1\).
            By Equation~\eqref{eq:d2}, we have \(\cD_2(u) \geq 3\).
            Let \(uv_j \in B^*_2 \setminus A_2\).
            First, we show that \(|A_2|\geq 1\).
            For that note that the only possible passing neighbors of \(v_j\) in \(\cD_2\) are the vertices in \(V(T_j) \setminus \{v_j\}\).
            Since \(\D_2(u) \geq 3\), it follows by  Lemma~\ref{lemma:fan2} that \(\{uv_j\}\) is an addible set towards \(v\) (i.e. outwards \(u\)) in \(\D_2\) and, therefore, \(|A_2| \geq 1\).  
            Now, since \(A_2 \subsetneq B^*_2\), we have,  \(|B^*_2 \setminus A_2| \geq 1\), and hence,
            \[
                \D_3(u) = \D_2(u) - |A_2| \geq 2 |B^*_2| - |A_2| = 2(|A_2| + |B^*_2 \setminus A_2|) - |A_2| \geq 3.
            \]
            By Lemma~\ref{lemma:fan2}, \(uv_j\) is addible towards \(v_j\) (i.e. outwards \(u\)) with respect to \(\D_3\), and hence, by Remark~\ref{remark:addible-concatenation}, the set \(A_2 \cup \{uv_j\}\subseteq B_2^*\) is an addible set outwards \(u\) with respect to \(\D_2\), a contradiction to the maximality of \(A_2\).
            Therefore, \(A_2 = B^*_2\).

            Let \(A_3 \subseteq B_3\) be a maximum addible set outwards \(u\) with respect to \(D_3\), \(G_4 = G_3 \addeset A_3\), \(D_4\) be the \(A_3\)-transformation outwards \(u\) with respect to \(D_3\), and \(B_4 = E(G) \setminus E(G_4)\).
            In what follows, we show that \(B_4 = \emptyset\), and hence \(G = G_4\).
            Suppose, for a contradiction, that \(B_4 \neq \emptyset\).
            By Lemma~\ref{lemma:extra-edges}, \(\D_4(u) > |B^u_0| - |A_1| - |A_2| -|A_3| = |B_4|\), and since \(|B_4|\geq 1\), it follows that \(\D_4(u) \geq 2\).
            Let \(uv \in B_4\), and note that \(v\) has at most one passing neighbor with respect to \(\D_4\) in \(G_4\):
            \begin{itemize}
                \item if \(v \in \mathcal{I}\), then every neighbor of \(v\) in \(G_4\) has odd degree, and hence \(v\) has no passing neighbor with respect to \(\cD_4\);
                \item if \(v = x^j_i\) for some \(j \in \mathcal{T}_1 \cup \mathcal{T}_2\), then \(i \leq 2\), and the only possible passing neighbor of \(x^j_i\) is \(x^j_3\); and
                \item if \(v = x^j_i\) for some \(j \in \mathcal{T}_3\), then \(|E_{G_2}(u, T_j)| \leq 1\),  otherwise \(uv \in A_2\). Thus there is at most one passing vertex in \(T_j\) with respect to \(\cD_4\) and, therefore, \(x^j_i\) has at most one passing neighbor with respect to \(\cD_4\).
            \end{itemize}
            Since \(\D_4(u) \geq 2\),  by Lemma~\ref{lemma:fan2}, \(uv\) is addible towards \(v\) (i.e. outwards \(u\)) with respect to \(\D_4\), and hence by Remark~\ref{remark:addible-concatenation}, \(A_3 \cup \{uv\}\) is an addible set outwards \(u\) with respect to \(D_3\), a contradiction to the choice of \(A_3\).
            Therefore, \(B_4 = \emptyset\) and \(G_4 = G\).
        \end{proof}
        
        By Subclaim~\ref{subclaim:one-triangle-neighbor}, we have \(\cT_2 \neq \emptyset\) and \(|B^u_0|\) is odd.
        Choose \(z \in \mathcal{T}_2\), and define \(\varphi'(T_{z}) = \{ux^{z}_1,ux^{z}_2\}\).
        Let 
        \(R_0 = \big(\bigcup_{v\in\mathcal{I}}\varphi(v)\big)\cup\big(\bigcup_{j\in\mathcal{T}\setminus\{z\}}\varphi(T_j)\big) \cup \varphi'(T_{z}) \text{ (see Figure~\ref{fig:one-triangle-neighbor-case2})}\),
        and let \(R^u_0 = R_0 \cap E_{G}(u)\).
        Note that \(|R^u_0| = |B^u_0|+1\), and hence \(|R^u_0|\) is even.
        Let \(G_0 = G \setminus R_0\),  and note that every vertex in \(V(R_0) \setminus \{x^z_3\}\) has odd degree in \(G_0\).
        Since no odd vertex of \(G\) is an even vertex of \(G_0\), 
        we have \(\EV{G_0}\subseteq\EV{G}\),
        and hence \(\Delta(\EV{G_0})\leq 3\).
        Let \(H_1,\ldots,H_\ell\) be the components of \(R_0\),
        where \(H_1\) is the subgraph of \(G\) induced by the edges in \(R^u_0\).
        Note that \(H_1\) is a star with center at \(u\) and at least four leaves, i.e.,
        \(|R^u_0| \geq 4\), since \(u\) has even neighbors in at least two components of \(\EV{G}\),
        \(\cT_2 \neq \emptyset\), and \(|R^u_0|\) is even.
        Also, all the leaves of \(R^u_0\) are even in \(G\), and \(u\) has no even neighbors in \(G_0\).
        Moreover, note that \(H_2, \ldots, H_\ell\) consist of single edges joining even vertices.
        Therefore, \(R_0\) is a Fan subgraph,
        and by Claim~\ref{claim:no-fan-subgraph}, \(\pn(G_0)\leq\floor{n / 2}\).
        Let \(\D_0\) be a minimum path decomposition of \(G_0\).
        
        \begin{figure}[ht]
            \centering
            \begin{tikzpicture}[scale = 0.8]
	\node (v) [black vertex] 	at (0,0) {};
	
\node (cu) []			at (180:4) {};
	\node (u) [black vertex] 	at ($(cu)+(0:1)$) {};
	\draw[edge,color1] (u) -- (v);
	
\node (c1)  []			at (200:4) {};
	\node (x11) [black vertex] 	at ($(c1)+(20:1)$) {};
	\node (x12) [black vertex] 	at ($(c1)+(140:1)$) {};
	\node (x13) [black vertex] 	at ($(c1)+(260:1)$) {};
	\draw[edge,color1]	(x11) -- (v) (x12) -- (x13);
	\draw[edge]		(x12) -- (x11) -- (x13);
	
\node (c2)  []			at ($(235:3.1)$) {};
	\node (x21) [black vertex] 	at ($(c2)+(-5:1)$) {};
	\node (x22) [black vertex] 	at ($(c2)+(120-5:1)$) {};
	\node (x23) [black vertex] 	at ($(c2)+(240-5:1)$) {};
	\draw[edge,color1]	(x21) -- (v) (x22) -- (x23);
	\draw[edge]		(v) -- (x22) -- (x21) -- (x23);
	
\node (c2)  []			at ($(240+50:3.1)$) {};
    \node (label_x1) at ($(240+51:3.2)$) {$T_{z}$};
	\node (x21) [black vertex] 	at ($(c2)+(50:1)$) {};
	\node (x22) [black vertex] 	at ($(c2)+(120+50:1)$) {};
	\node (x23) [black vertex] 	at ($(c2)+(240+50:1)$) {};
	\draw[edge,color1]	(x21) -- (v) (x22) -- (v);
	\draw[edge]		(x23) -- (x22) -- (x21) -- (x23);

\node (c3)  []			at (350:4) {};
	\node (x31) [black vertex] 	at ($(c3)+(120+50:1)$) {};
	\node (x32) [black vertex] 	at ($(c3)+(240+50:1)$) {};
	\node (x33) [black vertex] 	at ($(c3)+(50:1)$) {};
	\draw[edge,color1]	(x31) -- (v) (x32) to [bend left=10] (v) (x33) to [bend right=10] (v);
	\draw[edge]		(x32) -- (x31) -- (x33) -- (x32);

\node (label_u) at ($(v)+(90:0.5)$) {$u$};
\end{tikzpicture}

             \caption{Illustration of the Fan subgraph \(R_0\) in the proof of Claim~\ref{claim:one-triangle-neighbor}.
                    The edges of \(R_0\) are highlighted in red.}
            \label{fig:one-triangle-neighbor-case2}
        \end{figure}
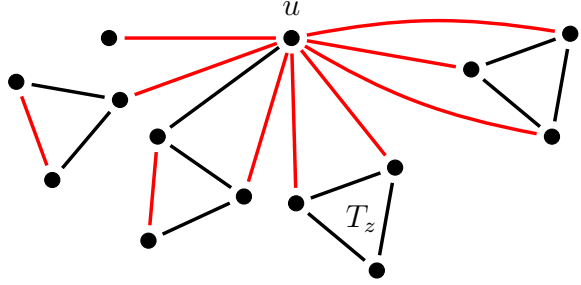        
        
        Note that every vertex  \(v \in V(R_0)\) has odd degree in \(G_0\), and hence \(\cD_0(v) \geq 1\).
        In particular, \(\cD_0(u) \geq 1\).
Also, note that every vertex \(v \in V(R_0) \setminus \{x^z_1, x^z_2\}\) has no neighbor with even degree in \(G_0\), and hence no passing neighbor in \(\cD_0\).
        On the other hand, \(x^{z}_1\) and \(x^{z}_2\) has only one possible passing neighbor in \(\cD_0\), namely the vertex \(x^{z}_3\) which has even degree in \(G_0\).
Let \(A_0 =\big\{x^z_2 x^z_3 \colon j \in \mathcal{T}_1 \cup \mathcal{T}_2 \setminus \{z\} \big\}\), \(G_1 = G_0 \addeset A_0\), and \(R_1 = E(G) \setminus E(G_1)\) (remark that \(R_1 = R^u_0\)). 
        Note that \(A_0\) is an induced matching in \(G_1\).
        If \(A_0 \neq \emptyset\), then let \(\D_1\) be the path decomposition given by Lemma~\ref{lemma:induced-matching}; otherwise let \(\D_1 = \D_0\).
        In either case, it follows that \(\D_1\) is a path decomposition of \(G_1\) such that
        \begin{itemize}
            \item \(|\D_1| = |\D_0|\); 
            \item for all \(j\in\mathcal{T}_1\cup\mathcal{T}_2 \setminus\{z\}\), we have \(\D_1(x^j_2) = \D_0(x^j_2) + 1\) and \(\D_1(x^j_3) = \D_0(x^j_3) - 1\); and
            \item \(\D_1(v) = \D_0(v)\) for every \(v \notin V(A_0)\). 
        \end{itemize}
        
        Note that, for \(j\in\mathcal{T}_1\cup\mathcal{T}_2\setminus\{z\}\), we have \(\D_1(x^j_2) \geq 2\), and for \(j\in\mathcal{T}_1\cup\mathcal{T}_2\), the vertex \(x^j_3\) is not adjacent to \(u\) in \(G\).
        Therefore, no neighbor of \(u\) in \(G\) is a passing vertex in \(\D_1\).
        By Lemma~\ref{lemma:fan4}, there is an addible set \(A_1 \subseteq R_1\) towards \(u\) with respect to \(\D_1\) such that \(|A_1|\geq \ceil{|R_1|/2}\).
        Let \(G_2 = G_1 \addeset A_1\), \(\D_2\) be the \(A_1\)-transformation of \(\D_1\) towards \(u\), and \(R_2 = E(G) \setminus E(G_2)\).
        If \(R_2 = \emptyset\), then \(G = G_2\), and \(\cD_2\) is a path decomposition of \(G\) such that \(|\cD_2| = |\cD_1| \leq \floor{n/2}\), a contradiction to the choice of \(G\).
        Thus, we may assume that \(R_2 \neq \emptyset\).

        By the construction of \(\cD_2\), it follows that
        \begin{equation}
        \cD_2(u) = \cD_1(u) + |A_1|.
        \end{equation}
        Let \(R^*_2\) be the set of edges \(uv_j \in R_2\), where  \( j \in \cT_3 \cup \{z\}\), such that 
        \(uv_j\) is the only edge in \(E(G)\) joining \(u\) to a vertex of \(T_j\) which does not belong to \(G_2\), i.e., 
        \[R^*_2 = \{uv_j \in R_2 \cap E_{G}(u, T_j) \:  j \in \cT_3 \cup \{z\} \tand |R_2 \cap E_{G}(u, T_j)| = 1\}.\]
        For every edge \(uv_j \in R^*_2\), it follows that the vertex \(v_j\) has at most two passing neighbors in \(\cD_2\),
        namely, the vertices in \(V(T_j) \setminus \{v_j\}\), which have even degree in \(G_2\). 
        Let \(A_2 \subseteq R^*_2\) be a maximum addible set outwards \(u\) with respect to \(\cD_2\). 
        Let \(G_3 = G_2 \addeset A_2\), \(\cD_3\) be the \(A_2\)-transformation outwards \(u\) with respect to \(\cD_2\), and \(R_3 = E(G) \setminus E(G_3)\).

        Now we show that \(A_2 = R^*_2\).
        Suppose, for a contradiction, that \(A_2 \subsetneq R^*_2\), and let \(uv_t \in R^*_2 \setminus A_2\).
        Note that \(\cD_2(u) = \cD_1(u) + |A_1| \geq 1 + \ceil{|R_1| / 2} =  1 + \ceil{|R^u_0| / 2} \geq 3\).
        Thus, by Lemma~\ref{lemma:fan2}, \(\{uv_t\}\) is an addible set outwards \(u\) with respect to \(\cD_2\), and hence \(|A_2| \geq 1\).
        By the construction of \(\cD_3\), it follows that \(\cD_3(u) = \cD_2(u) - |A_2|\).
        Now we show that \(\cD_2(u) \geq 2 |R^*_2|\).
        This is clear if \(uv_z \notin R^*_2\); thus suppose that \(uv_z \in R^*_2\), and hence
        \[\cD_2(u) = \cD_1(u) + |A_1| \geq 1 + 2 (|R^*_2| - 1)  + 1  = 2 |R^*_2|.\]
        Therefore, 
        \[
        \cD_3(u) = \cD_2(u) - |A_2|  \geq 2 |R^*_2| - |A_2| = 2( |A_2| + |R^*_2 \setminus A_2|) - |A_2| = |A_2| + 2 |R^*_2 \setminus A_2| \geq 3.
        \]
        By Remark~\ref{remark:addible-concatenation}, \(A_2 \cup \{uv_t\}\) is an addible set outwards \(u\) with respect to \(\cD_2\), a contradiction to the choice of \(A_2\).

        If \(R_3 = \emptyset\), then \(G = G_3\) and \(\cD_3\) is a path decomposition of \(G\) such that \(|\cD_3| = |\cD_2| \leq \floor{n /2}\), a contradiction to the choice of \(G\).
        Thus, we may assume that \(R_3 \neq \emptyset\).
        Let \(A_3 \subseteq R_3\) be a maximum addible set outwards \(u\) with respect to \(\cD_3\).
        Let \(G_4 = G_3 \addeset A_3\), \(\cD_4\) be the \(A_3\)-transformation outwards \(u\) with respect to \(\cD_3\), and \(R_4 = E(G) \setminus E(G_4)\).
        Now, we show that \(A_3 = R_3\).
        Suppose, for a contradiction, that \(A_3 \subsetneq R_3\), and let \(uv_t \in R_3 \setminus A_3\).
        By Lemma~\ref{lemma:extra-edges}, we have \(\D_4(u) > |R_3 \setminus A_3| \geq 1\),
        and hence \(\D_4(u)\geq 2\).
		We claim that \(v_t\) has at most one passing neighbor with respect to \(\D_4\).
        Indeed, if \(v_t \in \cI\), then every neighbor of \(v_t\) in \(G_4\) has odd degree, and hence \(v_t\) has no passing neighbor;
        if \(v_t = x^j_1\) for some \(j\in\mathcal{T}_1\cup\mathcal{T}_2\setminus\{z\}\),
        then the only passing neighbor of \(x^j_1\) is possibly \(x^j_3\);
        if \(v_t = x^j_i\) for some \(j\in\mathcal{T}_3\) and \(i\in\{1,2,3\}\),
        then, \(|E_{G_2}(u, T_t)| \leq 1\), otherwise \(uv_t \in A_2\). Thus there is at most one passing vertex in \(T_t\) with respect to \(\D_4\) and, therefore, \(v_t\) has at most one passing neighbor with respect to \(\D_4\); finally, if \(v_t = x^z_i\) for some \(i\in\{1,2\}\),
        then we must have \(|E_{G_2}(u, T_z)| = 0\), otherwise \(uv_t \in A_2\).
        Thus, the unique passing vertex in  \(T_{z}\) is \(x^z_3\), and hence \(v_t\) has at most one passing neighbor in \(\cD_4\).
        Since \(\D_4(u) \geq 2\), by Lemma~\ref{lemma:fan2},         
        \(uv_t\) is addible towards \(v_t\) with respect to \(\D_4\), hence by Remark~\ref{remark:addible-concatenation},  \(A_3 \cup \{uv_t\}\) is an addible set 
        outwards \(u\) with respect to \(\cD_3\), a contradiction to the choice of \(A_3\).
        Thus, \(A_3 =  R_3 =  E(G) \setminus E(G_3)\) and \(G = G_4\).
        Therefore, \(\cD_4\) is a path decomposition of \(G\) such that \(|\cD_4| = |\cD_3| \leq \floor{n /2}\), a contradiction to the choice of \(G\).
    \end{proof}
        
    Now, we can prove that \(\EV{G}\) contains no isolated vertex.
    Indeed, let \(x\) be an isolated vertex of \(\EV{G}\), and let \(y\) be any neighbor of \(x\).
    Since \(x\) is isolated, \(y\) has odd degree.
    By Claim~\ref{claim:one-triangle-neighbor}, \(x\) is the unique even neighbor of \(y\), a contradiction to Claim~\ref{claim:no-unique-even-neighbor}.

    Given a vertex \(u\) of \(G\) that has a triangle neighbor \(T\), we say that \(u\) is a \emph{full vertex} if every vertex of \(T\) is a neighbor of \(u\).
        
    \begin{claim}\label{claim:full-vertices1}
        Let \(u\) be a vertex of \(G\) that has a triangle neighbor.
        If \(u\) has an odd neighbor that has no even neighbor, then \(u\) is a full vertex.
    \end{claim}

    \begin{proof}
        Let \(u\) be as in the statement, let \(T\) be its triangle neighbor, and let \(v\) be an odd neighbor of \(u\) that has no even neighbor.
        Let \(V(T) = \{x,y,z\}\).
        Suppose, for a contradiction, that \(u\) has at most two neighbors in \(T\).
        Thus,  we may suppose, without loss of generality, that \(z\notin N(u)\).
        Note that, by Claims~\ref{claim:no-unique-even-neighbor} and~\ref{claim:one-triangle-neighbor}, \(u\) must be adjacent to \(x\) and \(y\). 
        Let \(F_1\) be the subgraph of \(G\) induced by the edges \(uv\) and \(ux\),
        and \(F_2\) be the subgraph of \(G\) induced by \(yz\),
        and let \(F = F_1\cup F_2\).
Let \(G_0 = G \setminus E(F)\)
        and note that \(d_{G_0}(w)\) is odd for every \(w\in V(T)\cup\{u\}\),
        and since \(v\) has no even neighbors in \(G\),
        \(v\) is an isolated vertex of \(\EV{G_0}\).
Therefore, \(\EV{G_0}-v\subseteq\EV{G}\), and hence \(\Delta(\EV{G_0})\leq 3\).
We claim that \(F\) is a Fan subgraph.
        Indeed, \(F_2\) consists of a single edge joining even vertices of \(G\).
        Moreover, \(F_1\) is a star with center at \(u\) and two leaves \(v_1=v\) and \(v_2=x\),
        where \(d_G(v_2)\) is even,
        and hence Definition~\ref{def:fan}\eqref{def:fan2} holds.
        Also, every neighbor of \(u\) has odd degree in \(G_0\), 
        and hence  Definition~\ref{def:fan}\eqref{def:fan5} holds.
        Finally, \(v_1\) is an odd vertex in \(G\), and \(u\) has odd degree in \(G\),
        and every component of \(\EV{G}\) is a triangle,
        which verifies Definition~\ref{def:fan}\eqref{def:fan6}.
        By Claim~\ref{claim:no-fan-subgraph}, \(\pn(G_0)\leq \floor{n / 2}\).
Let \(\D_0\) be a minimum path decomposition of \(G_0\).
In what follows, we restore the edges \(yz\), \(xu\), and \(uv\), in this order  (see Figure~\ref{fig:full-vertex1}).
        Note that every neighbor of \(y\) has odd degree in \(G_0\), thus, by Lemma~\ref{lemma:fan2}, \(yz\) is addible towards \(y\) with respect to \(\D_0\).
        Let \(\D_1\) be the \(yz\)-transformation of \(\D_0\) towards \(y\), and note that every neighbor of \(u\) in \(G_1=G_0+yz\) has odd degree, except for \(y\), but we have \(\D_1(y)\geq 2\).
        Thus, by Lemma~\ref{lemma:fan2}, \(xu\) is addible towards \(u\) with respect to \(\D_1\).
        Let \(\D_2\) be the \(xu\)-transformation of \(\D_1\) towards \(u\), and note that every neighbor of \(v\) in \(G_2 = G_1+xu\) has odd degree (note that \(u\) is not a neighbor of \(v\) in \(G_2\)).
        Again, by Lemma~\ref{lemma:fan2}, \(uv\) is addible towards \(v\) with respect to \(\D_2\).
        Let \(\D\) be the \(uv\)-transformation of \(\D_2\) towards \(v\).
        Therefore, \(|\D| = |\D_2| = |\D_1|=|\D_0|\leq\floor{n/2}\), a contradiction. 
    \end{proof}

    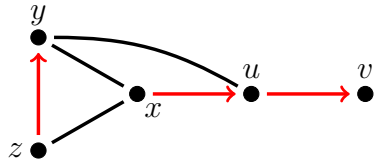
\begin{figure}[h]
        \centering
        \begin{tikzpicture}[scale = 1.5]
	\node (v) [black vertex] 	at (0,0) {};
	\node (u) [black vertex]	at (1,0) {};
	
\node (x) [black vertex]	at (-1,0) {};
	\node (y) [black vertex]	at ($(x) + (150:1)$) {};
	\node (z) [black vertex] 	at ($(x) + (-150:1)$) {};

\node () []	at ($(x) + (-45:0.2)$) {$x$};
	\node () []	at ($(y) + (90:0.2)$) {$y$};
	\node () [] 	at ($(z) + (180:0.2)$) {$z$};
	\node () []	at ($(v) + (90:0.2)$)  {$u$};
	\node () []	at ($(u) + (90:0.2)$)  {$v$};
	
	\draw[edge,color1,->] (v) -- (u);
	\draw[edge,color1,->] (x) -- (v);
	\draw[edge,color1,->] (z) -- (y);
	\draw[edge]	   (y) -- (x) -- (z);
	\draw[edge] (v) to [bend right=15] (y);
\end{tikzpicture}

         \caption{Illustration of the Fan subgraph \(F\) in the proof of Claim~\ref{claim:full-vertices1}. The edges of \(F\) are highlighted in red. A directed edge \(\Vec{ab}\) indicates the use of Lemma~\ref{lemma:fan2} adding the edge \(ab\) towards \(b\), following the ordering $yz, xu, uv$.}
        \label{fig:full-vertex1}
    \end{figure}

    \begin{claim}\label{claim:full-vertices2}
        Let \(u\) be a vertex of \(G\) that has a triangle neighbor.
        Then every odd neighbor \(u\) has an even neighbor.
    \end{claim}

    \begin{proof}
        Let \(u\) be as in the statement, let \(T\) be its triangle neighbor.
        Suppose, for a contradiction, that \(u\) has an odd neighbor \(v\) that has no even neighbor.
        By Claim~\ref{claim:full-vertices1}, every vertex of \(T\) is a neighbor of \(u\).
        Let \(V(T) = \{x,y,z\}\) and let \(S = \{ux,uy,uz\}\).
        Let \(G_0 = G-uv-S\), and note that \(d_{G_0}(w)\) is odd for every \(w\in V(T)\cup\{u\}\),
        and since \(v\) has no even neighbors in \(G\),
        \(v\) is an isolated vertex of \(\EV{G_0}\).
Therefore, \(\EV{G_0}-v\subseteq\EV{G}\), and hence \(\Delta(\EV{G_0})\leq 3\).
        Let \(F=F_1\) be the subgraph of \(G\) induced by the edges in \(S\cup\{uv\}\).
        We claim that \(F\) is a Fan subgraph.
        Indeed, \(F_1\) is a star with center at \(u\) and four leaves \(v_1=v\) and \(v_2=x\), \(v_3=y\), \(v_4=z\)
        where \(d_G(v_i)\) is even, for \(i\in\{2,3,4\}\),
        and hence Definition~\ref{def:fan}\eqref{def:fan2} holds.
        Also, every neighbor of \(u\) has odd degree in \(G_0\), 
        and hence  Definition~\ref{def:fan}\eqref{def:fan5} holds.
        Finally, \(v_1\) is an odd vertex in \(G\), and \(u\) has odd degree in \(G\),
        and every component of \(\EV{G}\) is a triangle,
        which verifies Definition~\ref{def:fan}\eqref{def:fan6}.
        By Claim~\ref{claim:no-fan-subgraph}, \(\pn(G_0)\leq \floor{n / 2}\).
Let \(\D_0\) be a minimum path decomposition of \(G_0\).
Note that no neighbor of \(u\) in \(G_0+S=G-uv\) is a passing vertex in \(\D_0\).
        By Lemma~\ref{lemma:fan4}, there is a \(B\subseteq S\) such that \(|B|\geq \lceil |S|/2\rceil\) and \(B\) is addible towards \(u\) with respect to \(\D_0\).
        Let \(\D_1\) be the \(B\)-transformation of \(\D_0\) towards \(u\).
        We have \(\D_1(u) \geq 1 + \lceil|S|/2\rceil\geq 3\).
        {\color{black}
        Note that \(S\setminus B\) contains at most one edge.
        In what follows, we obtain a decomposition \(\D_2\) of \(G_2=G_0+S=G-uv\) such that \(\D_2(u)\geq 2\).
        If \(S\setminus B =\emptyset\), then \(\D_2 = \D_1\) is the desired decomposition (see Figure~\ref{fig:full-vertex2-1}).
        If \(S\setminus B\neq \emptyset\), then suppose \(uz\in S\setminus B\) and put \(G_1 = G_0+B\).
        Note that the only passing neighbors of \(z\) are possibly \(x\) and \(y\), and hence, by Lemma~\ref{lemma:fan2}, \(uz\) is addible towards \(z\) with respect to \(\D_1\).}
Then, the \(uv\)-transformation \(\D_2\) of \(\D_1\) towards \(z\) is the desired decomposition (see Figure~\ref{fig:full-vertex2-2}).
        Finally, note that every neighbor of \(v\) in \(G_2\) has odd degree, and hence, by Lemma~\ref{lemma:fan2}, \(uv\) is addible towards \(v\) with respect to \(\D_2\).
        Then, the \(uv\)-transformation \(\D\) of \(\D_2\) towards \(v\) is a decomposition of \(G\) such that \(|\D|\leq\floor{n / 2}\), a contradiction.
    \end{proof}

    \begin{figure}[h]
        \centering
        \begin{subfigure}{.45\textwidth}
            \centering
            \begin{tikzpicture}[scale = 1.5]
	\node (v) [black vertex] 	at (0,0) {};
	\node (u) [black vertex]	at (1,0) {};
	
\node (x) [black vertex]	at (-1,0) {};
	\node (y) [black vertex]	at ($(x) + (150:1)$) {};
	\node (z) [black vertex] 	at ($(x) + (-150:1)$) {};

\node () []	at ($(x) + (-45:0.2)$) {$x$};
	\node () []	at ($(y) + (90:0.2)$) {$y$};
	\node () [] 	at ($(z) + (180:0.2)$) {$z$};
	\node () []	at ($(u) + (90:0.2)$)  {$v$};
	\node () []	at ($(v) + (90:0.2)$)  {$u$};
	
	\draw[edge,color1,->] (v) -- (u);
	\draw[edge,color1,->] (x) -- (v);
	\draw[edge,color1,->] (y) to [bend left=15] (v);
	\draw[edge,color1,->] (z) to [bend right=15] (v);
	\draw[edge] (z) -- (y);
	\draw[edge]	   (y) -- (x) -- (z);
\end{tikzpicture}

             \caption{}
            \label{fig:full-vertex2-1}
        \end{subfigure}
        \begin{subfigure}{.45\textwidth}
            \centering
            \begin{tikzpicture}[scale = 1.5]
	\node (v) [black vertex] 	at (0,0) {};
	\node (u) [black vertex]	at (1,0) {};
	
\node (x) [black vertex]	at (-1,0) {};
	\node (y) [black vertex]	at ($(x) + (150:1)$) {};
	\node (z) [black vertex] 	at ($(x) + (-150:1)$) {};

\node () []	at ($(x) + (-45:0.2)$) {$x$};
	\node () []	at ($(y) + (90:0.2)$) {$y$};
	\node () [] 	at ($(z) + (180:0.2)$) {$z$};
	\node () []	at ($(v) + (90:0.2)$)  {$u$};
	\node () []	at ($(u) + (90:0.2)$)  {$v$};
	
	\draw[edge,color1,->] (v) -- (u);
	\draw[edge,color1,->] (x) -- (v);
	\draw[edge,color1,->] (y) to [bend left=15] (v);
	\draw[edge,color1,->] (v) to [bend left=15] (z);
	\draw[edge] (z) -- (y);
	\draw[edge]	   (y) -- (x) -- (z);
\end{tikzpicture}

             \caption{}
            \label{fig:full-vertex2-2}
        \end{subfigure}
        \caption{Illustration of the Fan subgraph \(F\) in the proof of  Claim~\ref{claim:full-vertices2}.
                    The edges of \(F\) are highlighted in red.
                    A directed edge \(\Vec{ab}\) indicates the use of Lemmas~\ref{lemma:fan2} and~\ref{lemma:fan4} adding the edge \(ab\) towards \(b\).}
        \label{fig:full-vertex2}
    \end{figure}
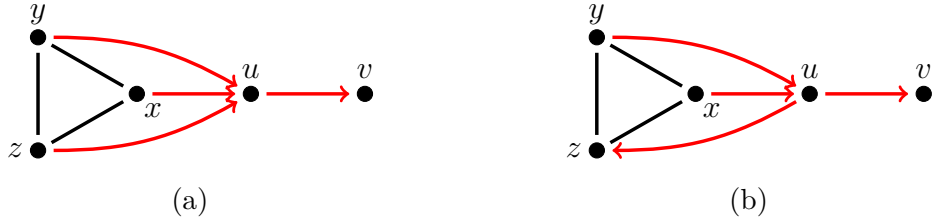
    
     \begin{claim}\label{claim:no-free-odd-vertex}
        Every odd vertex of \(G\) has a triangle neighbor.
    \end{claim}
    
    \begin{proof}
        Let \(v\) be an odd vertex, and let \(P\) a shortest path
        joining \(v\) to an even vertex, say \(x\).
        Let \(u\) be the neighbor of \(x\) in \(P\), and let \(w\) be the neighbor of \(u\) different from \(x\) in \(P\).
        By Claim~\ref{claim:full-vertices2}, \(w\) has an even neighbor, say \(y\).
        If \(w=v\), then the claim follow;
        and if \(w\neq v\), the path \((P\setminus \{xu,uw\})\cup \{wy\}\)
        is a path joining \(v\) to an even vertex and that is shorter than \(P\),
        a contradiction.
    \end{proof}
    
	\begin{claim}\label{claim:no-remaining-only-odds}
	    Let \(F\) be a non-empty subgraph of \(G\)
	    and let \(G' = G\setminus E(F)\).
Suppose that for every odd vertex \(u\) of \(G\) in \(V(F)\)
		the set \(N_F(u)\) contains an even vertex of \(G\).
		Then \(\pn(G')\leq\floor{n/2}\).
	\end{claim}
	
	\begin{proof}
		By Remark~\ref{remark:gallai-components},
		it is enough to prove that no component of \(G'\) is a SET graph.
Indeed, suppose that some component, say \(H'\) of \(G'\) is a SET graph,
        and let \(E\) and \(O\) be, respectively, the sets
        of even and odd vertices of \(H'\).
        Also, let \(E_F\) and \(O_F\) be, respectively, the even and odd vertices of \(G\) in \(V(F)\).
        Since every vertex of \(F\) has odd degree in \(G'\),
        we have \(\EV{G'}\subseteq\EV{G}\),
        and hence \(\Delta(\EV{G'})\leq 3\).
        Moreover, the vertices of \(F\) are not in \(E\).
Thus, the vertices in \(E\) have even degree in \(G\).
        Since every vertex of \(O\) has a neighbor in \(E\),
        if \(O\cap E_F\neq\emptyset\), 
        then \(\EV{G}\) contains a component different from a triangle.
        Now, let \(u\in O\cap O_F\).
        By hypothesis, if \(u\in O_F\),
        then \(F\) contains an edge \(uv\) such that \(v\) is an even vertex of \(G\).
        Thus, \(u\) is an odd vertex of \(G\) that has neighbors
        in more than one component of \(\EV{G}\),
        a contradiction to Claim~\ref{claim:one-triangle-neighbor}.	    
\end{proof}
        
    \begin{claim}\label{claim:full-vertices3}
        If \(u\) and \(v\) are two adjacent odd vertices having neighbors in two distinct even components, then \(u\) and \(v\) are a full vertices.
    \end{claim}

    \begin{proof}
        Let \(u\) and \(v\) be as in the statement, and let \(T_u\) and \(T_v\) be the triangle neighbors of \(u\) and \(v\), respectively.
        Let \(V(T_u) = \{a,b,c\}\) and \(V(T_v) = \{x,y,z\}\).
        Suppose, for a contradiction, that \(v\) has at most two neighbors in \(T_v\).    
        Thus, we may suppose, without loss of generality, that \(z\notin N(v)\).
        By Claims~\ref{claim:no-unique-even-neighbor} and~\ref{claim:one-triangle-neighbor},
        \(x,y\in N(v)\).
        If \(|N(u)\cap V(T_u)|=2\), then suppose, without loss of generality, that \(c\notin N(u)\).
        In this case, put \(S = \{ua,bc\}\).
        If \(|N(u)\cap V(T_u)|\neq 2\), by Claim~\ref{claim:no-unique-even-neighbor}, we have \(|N(u)\cap V(T_u)|=3\), then let \(S = \{ua,ub,uc\}\).
Let \(G_0 = G-\{uv,xv,yz\} - S\).
        Note that \(d_{G_0}(w) = d_G(w)-1\) for every \(w\in\{a,b,c,x,y,z\}\),
        \(d_{G_0}(v) = d_G(v)-2\), 
        and \(d_{G_0}(u)\in\{d_G(u)-2,d_G(u)-4\}\),
        and hence \(d_{G_0}(w)\) is odd 
for every \(w\in\{a,b,c,x,y,z,u,v\}\).
By Claim~\ref{claim:no-remaining-only-odds},
        \(\pn(G_0)\leq\floor{n / 2}\).

Let \(\D_0\) be a minimum path decomposition of \(G_0\).
In what follows, we restore the edges \(yz\), \(xv\), and \(uv\), in this order.
Note that every neighbor of \(y\) has odd degree in \(G_0\), thus, by Lemma~\ref{lemma:fan2}, \(yz\) is addible towards \(y\) with respect to \(\D_0\).
        Let \(\D_1\) be the \(yz\)-transformation of \(\D_0\) towards \(y\), and note that every neighbor of \(v\) in \(G_1=G_0+yz\) has odd degree, except for \(y\), but we have \(\D_1(y)\geq 2\).
Thus, by Lemma~\ref{lemma:fan2}, \(xv\) is addible towards \(v\) with respect to \(\D_1\).
        Let \(\D_2\) be the \(xv\)-transformation of \(\D_1\) towards \(v\), and note that every neighbor of \(u\) in \(G_2 = G_1+xv\) has odd degree.
Again, by Lemma~\ref{lemma:fan2}, \(uv\) is addible towards \(u\) with respect to \(\D_2\).
        Let \(\D_3\) be the \(uv\)-transformation of \(\D_2\) towards \(u\), and note that every neighbor of \(u\) in \(G_3 = G_2 + uv = G\setminus S\) has odd degree,
        and \(\D_3(u)\geq 2\).
        In what follows, we divide the proof on whether \(|N(u)\cap V(T_u)|=2\) or \(|N(u)\cap V(T_u)|\neq 2\).
        
        First, suppose that \(|N(u)\cap V(T_u)|=2\). 
        Note that \(a\) has no even neighbors in \(G_3\), and hence, by Lemma~\ref{lemma:fan2}, \(ua\) is addible towards \(a\) with respect to \(\D_3\).
        Let \(\D_4\) be the \(ua\)-transformation of \(\D_3\) towards \(a\), 
        and note that every neighbor of \(c\) in \(G_3\addeset ua\) is odd,
        except for \(a\), but \(\D_4(a) \geq 2\),
        and hence \(c\) has no passing neighbor in \(\D_4\).
        Thus, by Lemma~\ref{lemma:fan2}, \(bc\) is addible towards \(c\) with respect to \(\D_4\).
        But the \(bc\)-transformation \(\D\) of \(\D_4\) towards \(c\) is a path decomposition of \(G\) such that \(|\D|\leq\floor{n / 2}\), a contradiction (see Figure~\ref{fig:full-vertex3-1}).
            
        Thus, we may assume \(|N(u)\cap V(T_u)|\neq 2\).
Note that no neighbor of \(u\) in \(G_3=G\deleset S\) is a passing vertex in \(\D_3\).
        By Lemma~\ref{lemma:fan4}, there is \(B\subseteq S\) such that \(|B|\geq \lceil|S|/2\rceil=2\), and \(B\) is addible towards \(u\) with respect to \(\D_3\).
        Let \(\D_4\) be the \(B\)-transformation of \(\D_3\) towards \(u\), we have \(\D_4(u) \geq 1 + \lceil|S|/2\rceil\geq 3\).
        Note that \(S\setminus B\) contains at most one edge.
        If \(S\setminus B =\emptyset\), then \(\D_4\) is a path decomposition of \(G\) such that \(|\D_4|\leq\floor{n / 2}\), a contradiction.
        Thus, we may assume \(S\setminus B\neq \emptyset\).
        Suppose, without loss of generality, that \(S\setminus B =\{uc\}\) and let \(G_4 = G_3 + B\).
        Note that the only possible passing neighbors of \(c\) in \(\D_4\) are \(a\) and \(b\). 
        By Lemma~\ref{lemma:fan2}, \(uc\) is addible towards \(c\) with respect to \(\D_4\).
        Then, the \(uc\)-transformation \(\D\) of \(\D_4\) towards \(c\) is a decomposition of \(G\) such that \(|\D|\leq\floor{n / 2}\), a contradiction (see Figure~\ref{fig:full-vertex3-2}).
    \end{proof} 

    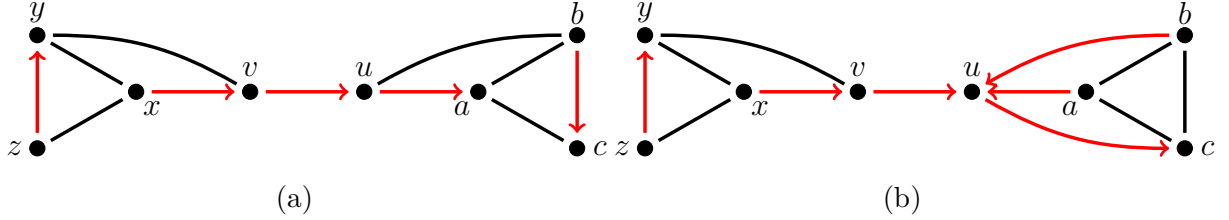
\begin{figure}[h]
        \centering
        \begin{subfigure}{.49\textwidth}
            \centering
            \begin{tikzpicture}[scale = 1.5]
	\node (v) [black vertex] 	at (0,0) {};
	\node (u) [black vertex]	at (1,0) {};
	
\node (x) [black vertex]	at (-1,0) {};
	\node (y) [black vertex]	at ($(x) + (150:1)$) {};
	\node (z) [black vertex] 	at ($(x) + (-150:1)$) {};
	
\node (a) [black vertex]	at ($(u)+(1,0)$) {};
	\node (b) [black vertex]	at ($(a) + (30:1)$) {};
	\node (c) [black vertex] 	at ($(a) + (-30:1)$) {};

\node () []	at ($(x) + (-45:0.2)$) {$x$};
	\node () []	at ($(y) + (90:0.2)$) {$y$};
	\node () [] 	at ($(z) + (180:0.2)$) {$z$};
	\node () []	at ($(a) + (180+45:0.2)$) {$a$};
	\node () []	at ($(b) + (90:0.2)$) {$b$};
	\node () [] 	at ($(c) + (0:0.2)$) {$c$};
	\node () []	at ($(v) + (90:0.2)$)  {$v$};
	\node () []	at ($(u) + (90:0.2)$)  {$u$};
	
	\draw[edge,color1,->] (v) -- (u);
	\draw[edge,color1,->] (x) -- (v);
	\draw[edge,color1,->] (z) -- (y);
	\draw[edge,color1,->] (b) -- (c);
	\draw[edge,color1,->] (u) -- (a);
	\draw[edge]	   (y) -- (x) -- (z) (c) -- (a) -- (b)  (u) to [bend left=15] (b);
	\draw[edge] (v) to [bend right=15] (y);
\end{tikzpicture}

             \caption{}
            \label{fig:full-vertex3-1}
        \end{subfigure}
        \begin{subfigure}{.49\textwidth}
            \centering
            \begin{tikzpicture}[scale = 1.5]
	\node (v) [black vertex] 	at (0,0) {};
	\node (u) [black vertex]	at (1,0) {};
	
\node (x) [black vertex]	at (-1,0) {};
	\node (y) [black vertex]	at ($(x) + (150:1)$) {};
	\node (z) [black vertex] 	at ($(x) + (-150:1)$) {};
	
\node (a) [black vertex]	at ($(u)+(1,0)$) {};
	\node (b) [black vertex]	at ($(a) + (30:1)$) {};
	\node (c) [black vertex] 	at ($(a) + (-30:1)$) {};

\node () []	at ($(x) + (-45:0.2)$) {$x$};
	\node () []	at ($(y) + (90:0.2)$) {$y$};
	\node () [] 	at ($(z) + (180:0.2)$) {$z$};
	\node () []	at ($(a) + (180+45:0.2)$) {$a$};
	\node () []	at ($(b) + (90:0.2)$) {$b$};
	\node () [] 	at ($(c) + (0:0.2)$) {$c$};
	\node () []	at ($(v) + (90:0.2)$)  {$v$};
	\node () []	at ($(u) + (90:0.2)$)  {$u$};
	
	\draw[edge,color1,->] (v) -- (u);
	\draw[edge,color1,->] (x) -- (v);
	\draw[edge,color1,->] (z) -- (y);
	\draw[edge] (c) -- (b);
	\draw[edge,color1,->] (a) -- (u);
	\draw[edge,color1,->] (b) to [bend right=15] (u);
	\draw[edge,color1,->] (u) to [bend right=15] (c);
	\draw[edge]	   (y) -- (x) -- (z) (c) -- (a) -- (b);
	\draw[edge] (v) to [bend right=15] (y);
\end{tikzpicture}

             \caption{}
            \label{fig:full-vertex3-2}
        \end{subfigure}
        \caption{Illustration of the proof of  Claim~\ref{claim:full-vertices3}.
                    The edges in \(\{uv,xv,yz\}\cup S\) are highlighted in red.
                    A directed edge \(\Vec{ab}\) indicates the use of Lemmas~\ref{lemma:fan2} and~\ref{lemma:fan4} adding the edge \(ab\) towards~\(b\).}
        \label{fig:full-vertex3}
    \end{figure}

    Recall that \(T_1,\ldots, T_s\) are the triangle components of \(\EV{G}\).
    Note that, given a graph \(G\), if \(s = 1\), then \(\EV{G}\) consists of one triangle.
    By Claim~\ref{claim:no-free-odd-vertex}, every odd vertex of \(G\) has an even neighbor,
    and by Claim~\ref{claim:no-unique-even-neighbor}, every odd vertex of \(G\)
    has at least two even neighbors.
    Thus, \(G\) is a SET graph, 
    a contradiction to the choice of \(G\).  
    Thus, we may assume \(s\geq 2\).

    Now, let \(P\) be a shortest path joining vertices of two different components of \(\EV{G}\).
By Claims~\ref{claim:only-triangles},~\ref{claim:one-triangle-neighbor}, and~\ref{claim:no-free-odd-vertex}, \(P\) contains precisely two internal vertices, say \(u\) and \(v\).
    Suppose, without loss of generality, that \(T_1\) is the triangle neighbor of \(u\),
    and \(T_2\) is the triangle neighbor of~\(v\).
Let \(V(T_1) = \{a,b,c\}\) and \(V(T_2) = \{x,y,z\}\).
    By Claim~\ref{claim:full-vertices3}, the vertices \(u\) and \(v\) are full vertices. Let \(S_u =\{ua,ub,uc\}\) and \(S_v=\{vx,vy,vz\}\), and let \(G_0 = G - uv - S_u - S_v\).
    Clearly, every vertex in \(\{a,b,c,x,y,z,u,v\}\) has odd degree in \(G_0\).
By Claim~\ref{claim:no-remaining-only-odds},
    \(\pn(G_0)\leq\floor{n / 2}\).
Let \(\D_0\) be a minimum path decomposition of \(G_0\).

    In what follows, we obtain a path decomposition \(\D_3\) of \(G_3 = G_0+uv+S_u= G-S_v\) such that \(\D_3(u),\D_3(v) \geq 1\), and then we extend it to a path decomposition of \(G\).
    First, we obtain a path decomposition \(\D_2\) of \(G_2 = G_0+S_u\) such that \(\D_2(u)\geq 2\).
    By Lemma~\ref{lemma:fan4}, there is a \(B_u\subseteq S_u\) such that \(|B_u|\geq \lceil |S_u|/2\rceil\geq 2\) and \(B_u\) is addible towards \(u\) with respect to \(\D_0\).
    Let \(\D_1\) be the \(B_u\)-transformation of \(\D_0\) towards \(u\).
    We have \(\D_1(u) \geq 1 + \lceil|S_u|/2\rceil\geq 3\).
    Note that \(S_u\setminus B_u\) contains at most one edge.
    If \(S_u\setminus B_u =\emptyset\), then put \(\D_2 = \D_1\) is the desired decomposition.
    If \(S_u\setminus B_u\neq \emptyset\), then suppose \(uc\in S_u\setminus B_u\) and put \(G_1 = G_0+B_u\).
    Note that the only possible passing neighbors of \(c\) are \(a\) and \(b\).
    By Lemma~\ref{lemma:fan2}, \(uc\) is addible towards \(c\) with respect to \(\D_1\).
    Then, the \(uc\)-transformation \(\D_2\) of \(\D_1\) towards \(c\) is the desired decomposition.
    Now, note that every neighbor of \(v\) in \(G_2 = G_1+uc = G_0+S_u\) has odd degree, and hence, by Lemma~\ref{lemma:fan2}, \(uv\) is addible towards \(v\) with respect to \(\D_2\).
    Then, the \(uv\)-transformation \(\D_3\) of \(\D_2\) towards \(v\) is a path decomposition of \(G_3 = G_0+uv+S_u= G-S_v\) such that \(\D_3(u)\geq 1\) and \(\D_3(v) \geq 2\).

    By Lemma~\ref{lemma:fan4}, there is a \(B_v\subseteq S_v\) such that \(|B_v|\geq \lceil |S_v|/2\rceil\geq 2\) and \(B_v\) is addible towards \(v\) with respect to \(\D_3\).
    Let \(\D_4\) be the \(B_v\)-transformation of \(\D_3\) towards \(v\).
    We have \(\D_4(v) \geq 1 + \lceil|S_v|/2\rceil\geq 3\).
    Note that \(S_v\setminus B_v\) contains at most one edge.
    If \(S_v\setminus B_v =\emptyset\), then \(\D = \D_4\) is a path decomposition of \(G\) such that \(|\D|\leq\floor{n / 2}\).
    If \(S_v\setminus B_v\neq \emptyset\), then suppose \(vz\in S_v\setminus B_v\).
    Note that the only passing neighbors of \(z\) in  \(G_4 = G_3+B_v\) are \(x\) and \(y\).
    By Lemma~\ref{lemma:fan2}, \(vz\) is addible towards \(z\) with respect to \(\D_4\).
    Then, the \(vz\)-transformation \(\D\) of \(\D_4\) towards \(z\) is a decomposition of \(G\) such that \(|\D|\leq\floor{n / 2}\).
\end{proof}

The next corollary then is a straightforward application of Lemma~\ref{lemma:ESET-special-decomposition} and Theorem~\ref{thm:main-theorem-1}.

\begin{corollary}
    If $G$ is a connected graph with \(\Delta(\EV{G}) \leq 3\), then \(\pn(G) \leq \ceil{|V(G)|/2}\).
\end{corollary}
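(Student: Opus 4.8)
The plan is to apply Theorem~\ref{thm:main-theorem-1} and then deal with the two possible outcomes separately. By Theorem~\ref{thm:main-theorem-1}, since \(G\) is connected with \(\Delta(\EV{G})\leq 3\), either \(G\) is a Gallai graph or \(G\) is a SET graph; set \(n = |V(G)|\).

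First I would handle the Gallai case: if \(G\) is a Gallai graph, then by definition \(\pn(G)\leq\floor{n/2}\leq\ceil{n/2}\), which is exactly what we want. This case requires nothing beyond the definition of a Gallai graph.

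The remaining case is when \(G\) is a SET graph. Here the key observation is that, by item~\ref{def:eset-1} of the definition of ESET graph, every SET graph is an ESET graph (of type~\ref{def:eset-1}), and every vertex of it is a connection vertex. Hence Lemma~\ref{lemma:ESET-special-decomposition} applies: picking any vertex \(u\) of \(G\) as connection vertex, \(G\) admits a path decomposition \(\D\) with \(\D(u)\geq 2\) and \(|\D|\leq\ceil{n/2}\); in particular \(\pn(G)\leq\ceil{n/2}\). Combining the two cases gives \(\pn(G)\leq\ceil{n/2}\) in all cases.

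There is no real obstacle here — the corollary is a direct bookkeeping consequence of the two cited results, the only thing to be careful about being the (immediate) identification of SET graphs with ESET graphs of type~\ref{def:eset-1} so that Lemma~\ref{lemma:ESET-special-decomposition} is literally applicable.
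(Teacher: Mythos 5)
Your proposal is correct and matches the paper's intended argument: the paper explicitly states that the corollary is a straightforward application of Theorem~\ref{thm:main-theorem-1} together with Lemma~\ref{lemma:ESET-special-decomposition}, which is exactly the case split you carry out (Gallai case via the definition, SET case via the type~\ref{def:eset-1} ESET decomposition).
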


\section{Towards Gallai's Conjecture}\label{sec:further}

In this section, we use Theorem~\ref{thm:main-theorem-1}
to prove Theorem~\ref{thm:theorems-strong},
which extends Fan's result~\cite{Fan05}.
Recall that \(\mathcal{G}\) denotes \calG.

\newcommand{\alternativecondition}[1]{$EV(#1)$ is a subgraph of a graph in \(\mathcal{G}\)}

\newcommand{\alternativeconditiongood}[1]{$EV(#1)$ is a subgraph of a graph in which 
									each block has maximum degree at most \(3\)
									and each component has at most one block that is not a triangle-free graph}

\newcommand{\alternativeconditionextraold}[2]{$EV(#1)$ is a subgraph of a graph $#2$ in which 
									each block has maximum degree at most \(3\)
									and each component has at most one block that is not a triangle-free graph}

\newcommand{\alternativeconditionextra}[2]{$EV(#1)$ is a subgraph of a graph $#2$ in which 
	each block has maximum degree at most~\(3\); 
    and each component either has maximum degree at most \(3\) or has at most one block that contains triangles}

\begin{theorem}\label{thm:further}
	If \(G\) is a connected simple graph on \(n\) vertices such that
\alternativecondition{G},
	then \(G\) is a Gallai graph or \(G\) is a SET graph.
\end{theorem}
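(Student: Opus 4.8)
The plan is to prove Theorem~\ref{thm:further} by induction on \(|E(G)|\), using Theorem~\ref{thm:main-theorem-1} as the base case and a leaf-block peeling argument for the inductive step. Suppose \(G\) is a connected counterexample minimizing \(|E(G)|\) and fix \(H\in\mathcal{G}\) with \(EV(G)\subseteq H\). If \(\Delta(EV(G))\le 3\), then Theorem~\ref{thm:main-theorem-1} already gives that \(G\) is a Gallai graph or a SET graph, a contradiction; so \(EV(G)\) has a vertex of E-degree at least \(4\). Let \(D\) be the component of \(EV(G)\) containing such a vertex and let \(C\) be the component of \(H\) with \(D\subseteq C\). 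Every block of \(D\) is a \(2\)-connected subgraph of \(H\), hence has maximum degree at most \(3\); since \(D\) has a vertex of degree \(\ge 4\) it has at least two blocks, and so at least two leaf blocks. Moreover \(C\) has a vertex of degree \(\ge 4\), so by the definition of \(\mathcal{G}\) the component \(C\) has at most one block containing a triangle; as every triangle of \(D\) lies inside that block and that block has maximum degree at most~\(3\), one checks that \(D\) has a leaf block \(B\) that is triangle-free with \(\Delta(B)\le 3\). Let \(w\) be the unique cut-vertex of \(D\) lying in \(B\).

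The inductive step deletes from \(G\) a carefully chosen set \(E'\subseteq E(EV(G))\) of edges lying inside \(B\) — e.g.\ all \(EV(G)\)-edges incident to a vertex \(x\in V(B)\setminus\{w\}\) of minimum degree in \(B\) (when \(B\) is a bridge this is a single edge, the case already handled by Pyber-type arguments), or more generally a Fan subgraph of \(G\) supported on \(B\) — and sets \(G'=G\setminus E'\). The key observation is that every edge of \(E'\) has both ends even in \(G\), so deleting \(E'\) creates no new even vertex; hence \(EV(G')\subseteq EV(G)\subseteq H\), i.e.\ the same \(H\in\mathcal{G}\) witnesses the hypothesis for \(G'\). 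Since \(|E(G')|<|E(G)|\), the induction hypothesis — together with Remark~\ref{remark:gallai-components} and, when \(G'\) is disconnected, Lemma~\ref{lemma:absorbing-all} and an analogue of Claim~\ref{claim:no-hanging-ESET} to absorb or rule out SET components — yields a path decomposition \(\D'\) of \(G'\) with \(|\D'|\le\lfloor n/2\rfloor\). The edges of \(E'\) are then reinserted in the style of the proof of Theorem~\ref{thm:main-theorem-1}: every vertex of \(B\) that lost an edge of \(E'\) is odd in \(G'\), so \(\D'\) assigns it at least one ending path, and Lemma~\ref{lemma:fan4} produces a large addible set towards \(x\); the remaining edges of \(E'\) are added back one at a time by Lemma~\ref{lemma:fan2} (invoking Lemma~\ref{lemma:induced-matching} when several reinsertions must be done in parallel), while Lemma~\ref{lemma:extra-edges} ensures that enough paths end at \(x\) throughout. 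As addible transformations preserve the cardinality of a decomposition, this produces a path decomposition of \(G\) of size at most \(\lfloor n/2\rfloor\), a contradiction.

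The only configuration in which no triangle-free leaf block is available is when a whole component of \(EV(G)\) is a single triangle-block, which has maximum degree at most \(3\) and is therefore already covered by Theorem~\ref{thm:main-theorem-1}; so, peeling triangle-free leaf blocks repeatedly, one is eventually reduced to \(\Delta(EV(G))\le 3\). The main obstacle I expect is the reinsertion bookkeeping around the cut-vertex \(w\): one must choose \(E'\) so that simultaneously (i) \(G'\) remains admissible (automatic here), (ii) the recovered decomposition of \(G'\) has enough paths ending at the vertices of \(B\) to absorb all of \(E'\), and (iii) after each partial addible transformation the number of passing neighbours inside \(B\) stays small enough for Lemma~\ref{lemma:fan2} to apply — this is exactly where triangle-freeness and the degree bound on \(B\) are used. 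Arranging the peeling so all of these hold at once, and correctly treating the cases where \(G'\) is disconnected or is (or contains) a SET graph, is the crux; the remainder follows the pattern of Section~\ref{sec:main-result}.
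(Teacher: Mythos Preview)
Your plan is essentially the paper's proof, reorganized. The paper also takes a minimal counterexample, proves the analogues of Claims~\ref{claim:no-hanging-ESET} and~\ref{claim:no-unique-even-neighbor}, and then shows (its Claim~\ref{claim:further-only-triangles}) that \emph{every} leaf block of \(EV(G)\) is a triangle, by precisely the deletion/reinsertion you sketch; the \(\mathcal G\)-structure is invoked only at the end, where a vertex \(u\) of E-degree \(\ge 4\) is a cut vertex of \(H\) and two of the pieces at \(u\) must each contain a triangle leaf block of \(EV(G)\), giving two triangle-containing blocks of \(H\). Your version simply swaps the order: first use \(\mathcal G\) to locate a triangle-free leaf block, then peel.

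The one place your sketch actually fails is the concrete choice of \(E'\). Taking \(E'\) to be all \(EV(G)\)-edges at a vertex \(x\in V(B)\setminus\{w\}\) with \(d_B(x)=2\) leaves \(x\) \emph{even} in \(G'\), so your assertion that ``every vertex of \(B\) that lost an edge of \(E'\) is odd in \(G'\)'' is false, and Lemma~\ref{lemma:extra-edges} (which requires \(u\) odd in \(G\)) does not apply; with \(\D'(x)\) possibly \(0\) you cannot push the second edge back. The paper avoids this by working in two stages: it first disposes of any vertex \(x\neq w\) with \(d_B(x)=3\) by deleting its three incident edges and reinserting via Lemma~\ref{lemma:fan4} plus one application of Lemma~\ref{lemma:fan2} towards the remaining endpoint (here \(v_3\neq w\) is arranged so that its E-degree drops to at most \(2\)); this forces \(B\) to be a cycle. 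Then, for a non-triangle cycle, it deletes two \emph{non-incident} edges \(uv_1\) and \(v_2w'\) (with \(v_1=w\)) and reinserts them in order, first \(v_2w'\) towards \(v_2\), then \(uv_1\) towards \(u\), using only Lemma~\ref{lemma:fan2}. Lemmas~\ref{lemma:induced-matching} and~\ref{lemma:extra-edges} are not used anywhere in this argument.
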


\begin{proof}Suppose that the statement does not hold, 
	and let \(G\) be a graph 
	on \(n\) vertices such that
\alternativecondition{G}.
	Suppose that \(G\) is a counterexample that minimizes \(|E(G)|\).
	In what follows, we prove three claims regarding \(G\).	
	The proof of Claims~\ref{claim:further-no-hanging-ESET} and~\ref{claim:further-no-unique-even-neighbor}
	are analogous to the proofs of Claims~\ref{claim:no-hanging-ESET} and~\ref{claim:no-unique-even-neighbor}, respectively.
	We present their proof for completeness.
First, we prove that every hanging ESET subgraph of \(G\) must be connected at a special vertex.
	
	\begin{claim}\label{claim:further-no-hanging-ESET}
	    Let \(K\) be a hanging ESET subgraph of \(G\),
	    and let \(G'\) be such that \(G = K\cup G'\), and \(V(K)\cap V(G')=\{u\}\).
Then \(u\) is an odd vertex of \(G\).
	\end{claim}
	
	\begin{proof}
	    Let \(K\), \(G'\), and \(u\) be as in the statement.
Suppose, for a contradiction, that \(u\) has even degree in \(G\).
	    Then \(\EV{G'}\subseteq\EV{G}\),
	    and hence 
\alternativecondition{G'}.
	    By the minimality of \(G\), the graph \(G'\) is either a Gallai graph or a SET graph.
	    First, suppose that \(G'\) is a Gallai graph, 
	    i.e., \(\pn(G')\leq\lfloor|V(G')|/2\rfloor\).
	    By Lemma~\ref{lemma:absorbing-all}, it follows that
	    \begin{align*}
	        \pn(G) &\leq \big\lceil|V(K)|/2\big\rceil + \big\lfloor|V(G')|/2\big\rfloor-1\\
	                &\leq \big(|V(K)|+1\big)/2 + |V(G')|/2-1\\
	                &=\big(|V(K)|+|V(G')|-1\big)/2\\
	                &=|V(G)|/2.
	    \end{align*}
        Therefore,  \(\pn(G)\leq \big\lfloor |V(G)|/2\big\rfloor\),
        and \(G\) is a Gallai graph,
        a contradiction.
	    
	    Thus, we may assume that \(G'\) is a SET graph.
	    By Lemma~\ref{lemma:ESET-special-decomposition},
	    \(K\) (resp.  \(G'\)) admits a path decomposition \(\D_K\) (resp.\ \(\D'\)) 
	    such that \(\D_K(u)\geq 2\) and \(|\D_K|\leq\big\lceil|V(K)|/2\big\rceil\) (resp.\ \(\D'(u)\geq 2\)
	    and \(|\D'|\leq \big\lceil |V(G')|/2\big\rceil\)).
	    Let \(P_1\) and \(P_2\) be paths in \(\D_K\) having \(u\) as end vertex,
	    and \(Q_1\) and \(Q_2\) be paths in \(\D'\) having \(u\) as end vertex.
	    Put \(R_1 = P_1\cup Q_1\) and \(R_2=P_2\cup Q_2\),
	    and note that \(\D = \big(\D_K\setminus\{P_1,P_2\}\big)\cup\big(\D'\setminus\{Q_1,Q_2\}\big)\cup\{R_1,R_2\}\)
	    is a path decomposition of \(G\) with cardinality
	    \begin{align*}
	        |\D| &\leq \big\lceil |V(K)|/2\big\rceil + \big\lceil|V(G')|/2\big\rceil -2 \\
	            &\leq \big(|V(K)|+1\big)/2+\big(|V(G')|+1\big)/2-2\\
	            &\leq \big(|V(K)|+|V(G')|-2\big)/2 \\
	            &< \big(|V(K)|+|V(G')|-1\big)/2 \\
                &=|V(G)|/2.
	    \end{align*}
        Therefore,  \(\pn(G)\leq \big\lfloor |V(G)|/2\big\rfloor\),
        and \(G\) is a Gallai graph,
        a contradiction.
	\end{proof}

    Now use Fan's techniques to prove that \(\EV{G}\) consists of vertex-disjoint triangles.
    First, we prove that no vertex of \(G\) has a unique even neighbor.

    \begin{claim}\label{claim:further-no-unique-even-neighbor}
        No vertex of \(G\) has exactly one even neighbor.
    \end{claim}

    \begin{proof}
        Suppose, for a contradiction, that \(G\) contains a vertex \(u\) that has precisely one even neighbor, say \(v\), and let \(G'=G \setminus uv\).
        Note that \(v\) has odd degree in \(G'\) and \(u\) has no even neighbor in \(G'\).
        Therefore, \(\EV{G'}-u\subseteq \EV{G}\) and if \(d_G(u)\) is odd, 
		then \(u\) is an isolated vertex in \(\EV{G'}\).
		Thus, 
\alternativecondition{G'}.
We claim that \(G'\) is a Gallai graph.
        By Remark~\ref{remark:gallai-components}, it is enough to prove that no component of \(G'\)
        is a SET graph.
Indeed, \(G'\) has at most two components, say \(G'_u\) and \(G'_v\), 
        that contain, respectively, \(u\) and \(v\).
Since \(u\) has no even neighbors in \(G'\), \(G'_u\) is not a SET graph.
Thus, if \(G'\) is connected, i.e., \(G'_u=G'_v\),
        then \(G'\) is a Gallai graph, as desired.
        Thus, we may assume \(G'_u\neq G'_v\).
        In this case, note that, if \(G'_v\) is a SET graph,
        then \(G'_v\) is a hanging ESET subgraph of \(G\) connected at \(v\),
        which is an even vertex of \(G\), a contradiction to Claim~\ref{claim:further-no-hanging-ESET}.
        Thus, \(G'_u\) and \(G'_v\) are Gallai graphs as desired.

        Let \(\sD'\) be a minimum path decomposition of \(G'\).
        Since \(v\) has odd degree in \(G'\), it follows \(\D'(v) \geq 1\), and since \(u\) has no even neighbor in \(G'\), we have \(|\{x\in N_{G'}(u)\colon \D'(x)=0\}|=0\).
        Thus, by Lemma~\ref{lemma:fan2}, \(uv\) is addible towards \(u\) with respect to \(\D'\), a contradiction.
    \end{proof}
    
    The next claim follows the steps introduced by Fan~\cite{Fan05}.

    \begin{claim}\label{claim:further-only-triangles}
        Every leaf block of \(\EV{G}\) is a triangle or an isolated vertex.
    \end{claim}

    \begin{proof}
Suppose, for a contradiction, that \(\EV{G}\) contains a component \(C\) which is neither a triangle nor an isolated vertex, and let \(H\) be a leaf block of \(C\).
If \(H \neq C\), then let \(w\) be the cut vertex of \(C\) in \(H\),
        otherwise, let \(w\) be any vertex of \(H\).

        \begin{subclaim}
          \(H\) is a cycle.
        \end{subclaim}

        \begin{proof}
            By Claim~\ref{claim:further-no-unique-even-neighbor}, no vertex of \(H\) has E-degree \(1\).
We prove that \(d_H(u)\leq 2\) for every \(u\in V(H)\setminus\{w\}\).
            Suppose, for a contradiction, that \(H\) has a vertex \(u\neq w\) with degree~\(3\), and let \(\{v_1, v_2, v_3\} \subseteq N_H(u)\) be three even neighbors of \(u\).
            Let \(F\) be the subgraph of \(G\) induced by the edges \(uv_1, uv_2, uv_3\),
            and let \(G' = G \setminus E(F)\).
            Note that every vertex in \(V(F)\) has odd degree in \(G'\) and, hence, that \(\EV{G'} \subset \EV{G}\).
            Thus, it follows that 
\alternativecondition{G'}.
We claim that no component of \(G'\) is a SET graph.
			Indeed, suppose that some component \(C'\) of \(G'\) is a SET graph.
			Let \(x\), \(y\), \(z\) be even vertices of \(C'\).
Suppose that \(C'\) contains at least two vertices in \(\{u,v_1,v_2,v_3\}\), say \(a\) and \(b\).
			Note that \(x\), \(y\), and \(z\) are vertices of \(H\).
			Since \(C'\) is a SET graph, \(a\) and \(b\) have a common neighbor in \(x,y,z\), say \(x\), but this implies that \(x\) has degree at least \(4\) in \(H\), a contradiction.
			Thus, \(C'\) contains at most one vertex in \(\{u,v_1,v_2,v_3\}\), say \(a\).
			This implies that \(C'\) is a hanging ESET of \(G\),
			but \(a\) has even degree in \(G\), a contradiction to Claim~\ref{claim:further-no-hanging-ESET}.		
			By Remark~\ref{remark:gallai-components}, \(G'\) is a Gallai graph.	
Let \(\D_1\) be a minimum path decomposition of \(G'\).
            Since the vertices of \(F\) have odd degree in \(G'\), it follows that \(\sD_1(v) \geq 1\) for every \(v \in V(F)\).
            By Lemma~\ref{lemma:fan4}, there is a set \(B\subseteq\{uv_1,uv_2,uv_3\}\) addible towards \(u\) with respect to \(\D_1\) and containing \textcolor{black}{at least} two edges.
            Moreover, if \(w\) is a neighbor of \(u\), say \(w=v_1\), then Lemma~\ref{lemma:fan4} guarantees that we can choose \(B\) such that \(uv_1\in B\).
            Let \(\D_2\) be the \(B\)-transformation of \(\D_1\) towards \(u\).
            If \(|B|=3\), then \(\D_2\) is a path decomposition of \(G\) such that \(|\D_2| = |\D_1| \leq\lfloor n/2\rfloor\), a contradiction.
            Thus, we may assume \(|B|=2\).
            Suppose, without loss of generality that \(uv_3\notin B\).
Note that \(\D_2(u)\geq 3\), and that, since \(d_{\EV{G}}(v_3) \leq 3\), we have \(d_{\EV{G'+B}}(v_3)\leq 2\), because \(u\) is not a neighbor of \(v_3\) in \(G'\).
            Thus, by Lemma~\ref{lemma:fan2}, \(uv_3\) is addible towards \(v_3\) with respect to \(\D_2\), a contradiction.
            Thus, every vertex in \(H\) different from \(w\) has degree~\(2\).
        \end{proof}
        
        Suppose that \(H\) is not a triangle, and let \(u\) be a neighbor of \(w\) in \(H\).
        Let \(v_1\) and \(v_2\) be the even neighbors of \(u\), where \(w=v_1\).
        Since \(H\) is not a triangle, \(v_1\) and \(v_2\) are not adjacent.
        Let \(w'\) be an even neighbor of \(v_2\) different from \(u\).  
Let \(G' = G \setminus \{uv_1,v_2w'\}\), and
        note that every vertex in \(\{u,v_1,v_2,w'\}\) has odd degree in \(G'\), 
        and hence \(\EV{G'}\subseteq \EV{G}\).
        Thus, 
\alternativecondition{G'}.
We claim that no component of \(G'\) is a SET graph.
		Indeed, suppose that some component \(C'\) of \(G'\) is a SET graph.
		Let \(x\), \(y\), \(z\) be even vertices of \(C'\).
Suppose that \(C'\) contains at least two vertices in \(\{u,v_1,v_2,v_3\}\), say \(a\) and \(b\).
		Note that \(x\), \(y\), and \(z\) are vertices of \(H\).
		Since \(C'\) is a SET graph, \(a\) and \(b\) have a common neighbor in \(x,y,z\), say \(x\), but this implies that \(x\) has degree at least \(4\) in \(B'\), a contradiction.
		Thus, \(C'\) contains at most one vertex in \(\{u,v_1,v_2,w'\}\), say \(a\).
		This implies that \(C'\) is a hanging ESET of \(G\),
		but \(a\) has even degree in \(G\), a contradiction to Claim~\ref{claim:further-no-hanging-ESET}.		
		By Remark~\ref{remark:gallai-components}, \(G'\) is a Gallai graph.	
Let \(\D_1\) be a minimum path decomposition of \(G'\).
        Note that \(v_2\) has no even neighbors in \(G'\).
        Thus, by Lemma~\ref{lemma:fan2}, \(v_2w'\) is addible towards \(v_2\) with respect to \(\D_1\).
        Let \(\D_2\) be the \(v_2w'\)-transformation of \(\D_1\)  towards \(v_2\).
        Note that \(v_2\) is the only even neighbor of \(u\) in \(G'+v_2w'\),
        but \(\D_2(v_2)\geq 2\).
        Thus, by Lemma~\ref{lemma:fan2}, \(uv_1\) is addible towards \(u\) with respect to \(\D_2\).
        Therefore, the \(uv_1\)-transformation \(\D\) of \(\D_2\)  towards \(u\) is a path decomposition of \(G\) 
        such that \(|\D|\leq\floor{n/2}\), a contradiction.
    \end{proof}

	Now, if every component of \(\EV{G}\) has maximum degree at most \(3\),
	the statement follows by Theorem~\ref{thm:main-theorem-1}.
	Thus, we may assume that \(\EV{G}\) contains a vertex \(u\) of degree at least \(4\).
	By hypothesis,  \alternativeconditionextra{G}{H}.
	Let \(H_u\) be the component of \(H\) that contains \(u\).
Since \(d_H(u)\geq 4\), \(u\) must be a cut vertex of \(H\) (and hence, a cut vertex of \(H_u\)).
	Let \(H'_1,\ldots,H'_k\) be the components of \(H_u\delv u\),
	and put \(H_i = H[V(H'_i)\cup\{u\}]\) for \(i=1,\ldots,k\).
	Clearly, \(H_1,\ldots, H_k\) decompose \(H_u\).
	Since \(d_{\EV{G}}(u)\geq 4\),
	at least two of these graphs, say \(H_i\) and \(H_j\), contain edge of \(\EV{G}\) incident to \(u\).
Let \(H''_i\) and \(H''_j\) be the subgraphs of \(\EV{G}\) contained in \(H_i\) and \(H_j\), respectively.
	By Claim~\ref{claim:further-only-triangles}, every leaf block of \(\EV{G}\) in \(H''_i\) and in \(H''_j\) is a triangle.
	This implies that each \(H_i\) and \(H_j\) contain a block that is not a triangle-free graph,
	and hence \(H_u\) contains at least two blocks which are not triangle-free graphs,
	a contradiction.
\end{proof}

\section{Concluding remarks}\label{sec:concluding}

In this paper we give a step towards verifying Conjecture~\ref{conj:gallai}.
In fact, Theorems~\ref{thm:main-theorem-1} and~\ref{thm:further} present statements which are in between Conjecture~\ref{conj:gallai}
and Conjecture~\ref{conj:strong-gallai}.
This indicates that an intermediate statement may be easier to deal than Conjectures~\ref{conj:gallai} and~\ref{conj:strong-gallai}.
In order to strengthen the results in this paper for fitting Conjecture~\ref{conj:strong-gallai},
one need only to verify Conjecture~\ref{conj:strong-gallai} for SET graphs.
In fact, knowing that the only non-Gallai graphs with E-degree at most \(3\)
are the odd semi-cliques would simplify the proof of Claim~\ref{claim:no-fan-subgraph},
which was introduced to deal with SET subgraphs.
By using the Integer Linear Formulation presented in~\cite{BoCaSa-Lagos19},
we were able to check this fact for SET graphs up to eleven vertices.

This work benefited greatly from the techniques introduced by Fan~\cite{Fan05},
and 
there are two directions that we believe to be natural for extending the results presented in this paper,
i.e., two graph classes for which Conjectures~\ref{conj:gallai} and~\ref{conj:strong-gallai} are worth exploring 
with the techniques introduced here and in~\cite{Fan05}.

\begin{enumerate}[1.]\setlength\itemsep{0em}
	\item	Graphs with E-degree at most \(4\);
	\item	Graphs in which each block of \(\EV{G}\) has maximum degree at most \(3\).
\end{enumerate}

\bibliographystyle{amsplain}
\scriptsize
\begin{bibdiv}
\begin{biblist}

\bib{BonamyPerret19}{article}{
      author={Bonamy, Marthe},
      author={Perrett, Thomas~J.},
       title={Gallai's path decomposition conjecture for graphs of small
  maximum degree},
        date={2019},
        ISSN={0012-365X},
     journal={Discrete Math.},
      volume={342},
      number={5},
       pages={1293\ndash 1299},
         url={https://doi.org/10.1016/j.disc.2019.01.005},
      review={\MR{3905200}},
}

\bib{Bondy14}{article}{
      author={Bondy, Adrian},
       title={Beautiful conjectures in graph theory},
        date={2014},
        ISSN={0195-6698},
     journal={European J. Combin.},
      volume={37},
       pages={4\ndash 23},
         url={http://dx.doi.org/10.1016/j.ejc.2013.07.006},
      review={\MR{3138588}},
}

\bib{BoMu08}{book}{
      author={Bondy, John~Adrian},
      author={Murty, U. S.~R.},
       title={Graph theory},
      series={Graduate texts in mathematics},
   publisher={Springer},
     address={New York, London},
        date={2008},
        ISBN={978-1846289699},
}

\bib{BoSaCoLe-arxiv}{article}{
      author={Botler, F.},
      author={Sambinelli, M.},
      author={Coelho, R.~S.},
      author={Lee, O.},
       title={On {G}allai's and {H}aj\'os' conjectures for graphs with
  treewidth at most \(3\)},
        date={2017-06},
     journal={ArXiv e-prints},
      eprint={1706.04334},
        note={submitted},
}

\bib{BotlerJimenez2017}{article}{
      author={Botler, F\'abio},
      author={Jim\'enez, Andrea},
       title={On path decompositions of {$2k$}-regular graphs},
        date={2017},
        ISSN={0012-365X},
     journal={Discrete Math.},
      volume={340},
      number={6},
       pages={1405\ndash 1411},
      review={\MR{3624626}},
}

\bib{BoJiSa18}{article}{
      author={Botler, F{\'a}bio},
      author={Jim{\'e}nez, Andrea},
      author={Sambinelli, Maycon},
       title={Gallai's path decomposition conjecture for triangle-free planar
  graphs},
        date={2018},
     journal={arXiv preprint arXiv:1803.06768},
}

\bib{BoSaCoLe}{article}{
      author={Botler, F{\'a}bio},
      author={Sambinelli, Maycon},
      author={Coelho, Rafael~S},
      author={Lee, Orlando},
       title={Gallai's path decomposition conjecture for graphs with treewidth
  at most 3},
        date={2019},
     journal={Journal of Graph Theory},
}

\bib{BoCaSa-Lagos19}{inproceedings}{
      author={Botler, Fábio},
      author={Cano, Rafael},
      author={Sambinelli, Maycon},
       title={On computing the path number of a graph},
        date={2019},
   booktitle={X latin and american algorithms, graphs and optimization
  symposium (\text{LAGOS})},
        note={to appear},
}

\bib{Fan05}{article}{
      author={Fan, Genghua},
       title={Path decompositions and {G}allai's conjecture},
        date={2005},
        ISSN={0095-8956},
     journal={J. Combin. Theory Ser. B},
      volume={93},
      number={2},
       pages={117\ndash 125},
         url={http://dx.doi.org/10.1016/j.jctb.2004.09.008},
      review={\MR{2117933 (2005m:05125)}},
}

\bib{FavaronKouider88}{article}{
      author={Favaron, Odile},
      author={Kouider, Mekkia},
       title={Path partitions and cycle partitions of {E}ulerian graphs of
  maximum degree {$4$}},
        date={1988},
        ISSN={0081-6906},
     journal={Studia Sci. Math. Hungar.},
      volume={23},
      number={1-2},
       pages={237\ndash 244},
      review={\MR{962453}},
}

\bib{GengFangLi15}{article}{
      author={Geng, Xianya},
      author={Fang, Minglei},
      author={Li, Dequan},
       title={Gallai's conjecture for outerplanar graphs},
        date={2015},
     journal={Journal of Interdisciplinary Mathematics},
      volume={18},
      number={5},
       pages={593\ndash 598},
         url={http://dx.doi.org/10.1080/09720502.2014.1001570},
}

\bib{JimenezWakabayashi2017}{article}{
      author={Jim\'enez, Andrea},
      author={Wakabayashi, Yoshiko},
       title={On path-cycle decompositions of triangle-free graphs},
        date={2017},
        ISSN={1365-8050},
     journal={Discrete Math. Theor. Comput. Sci.},
      volume={19},
      number={3},
       pages={Paper No. 7, 21},
      review={\MR{3717034}},
}

\bib{Lovasz68}{incollection}{
      author={Lov{\'a}sz, L.},
       title={On covering of graphs},
        date={1968},
   booktitle={Theory of {G}raphs ({P}roc. {C}olloq., {T}ihany, 1966)},
   publisher={Academic Press, New York},
       pages={231\ndash 236},
      review={\MR{0233723 (38 \#2044)}},
}

\bib{Pyber96}{article}{
      author={Pyber, L.},
       title={Covering the edges of a connected graph by paths},
        date={1996},
        ISSN={0095-8956},
     journal={J. Combin. Theory Ser. B},
      volume={66},
      number={1},
       pages={152\ndash 159},
         url={http://dx.doi.org/10.1006/jctb.1996.0012},
      review={\MR{1368522 (96i:05101)}},
}

\end{biblist}
\end{bibdiv}

\end{document}